\documentclass[11pt,a4paper,twoside]{amsart}

\usepackage{amsmath,amsfonts,amsthm,amsopn,color,amssymb,enumitem,cite,soul}
\usepackage{palatino}
\usepackage{graphicx}
\usepackage[normalem]{ulem}
\usepackage[colorlinks=true,urlcolor=blue,citecolor=red,linkcolor=blue,linktocpage,pdfpagelabels,bookmarksnumbered,bookmarksopen]{hyperref}
\hypersetup{urlcolor=blue, citecolor=red, linkcolor=blue}
\usepackage[left=3.4cm,right=3.4cm,top=2.72cm,bottom=2.72cm]{geometry}


\usepackage[hyperpageref]{backref}

\usepackage[colorinlistoftodos]{todonotes}
\makeatletter
\providecommand\@dotsep{5}
\def\listtodoname{List of Todos}
\def\listoftodos{\@starttoc{tdo}\listtodoname}
\makeatother

\newcommand{\e}{\varepsilon}
\newcommand{\eps}{\varepsilon}
\newcommand{\C}{\mathbb{C}}
\newcommand{\R}{\mathbb{R}}

\newcommand{\RN}{{\mathbb{R}^N}}
\newcommand{\RD}{{\mathbb{R}^2}}

\newcommand{\de}{\partial}
\newcommand{\weakto}{\rightharpoonup}

\renewcommand{\le}{\leslant}
\renewcommand{\ge}{\geslant}
\renewcommand{\a }{\alpha }

\renewcommand{\d }{\delta }
\newcommand{\vfi}{\varphi}
\newcommand{\g }{\gamma }

\newcommand{\n }{\nabla }
\newcommand{\s }{\sigma }

\renewcommand{\O}{\Omega}

\renewcommand{\S}{\Sigma}

\newcommand{\calh}{\mathcal{H}}
\newcommand{\ch}{\mathcal{H}^{2,4}(\RD)}
\newcommand{\chr}{\mathcal{H}_r^{2,4}(\RD)}
\renewcommand{\H}{H^1(\RD)}
\newcommand{\Hr}{H^1_r(\RD)}

\newcommand{\M}{\mathcal{M}}
\renewcommand{\S}{\mathcal{S}}
\newcommand{\N}{\mathbb{N}}

\renewcommand{\C}{\mathbb{C}}
\renewcommand{\o}{\omega}

\newcommand{\inb }{\int_{B_R}}
\newcommand{\ird }{\int_{\RD}}

\def\bbm[#1]{\mbox{\boldmath $#1$}}
\newcommand{\beq }{\begin{equation}}
\newcommand{\eeq }{\end{equation}}

\renewcommand{\le}{\leqslant}
\renewcommand{\ge}{\geqslant}
\newcommand{\dis}{\displaystyle}

\newtheorem{theorem}{Theorem}[section]
\newtheorem{lemma}[theorem]{Lemma}

\newtheorem{definition}[theorem]{Definition}
\newtheorem{proposition}[theorem]{Proposition}
\newtheorem{remark}[theorem]{Remark}
\newtheorem{corollary}[theorem]{Corollary}

\title[Positive energy static solutions for the Chern-Simons-Schr\"odinger system]{Positive energy static solutions for the Chern-Simons-Schr\"odinger system under a large-distance fall-off requirement on the gauge potentials}

\author[A. Azzollini]{Antonio Azzollini}
\address{Dipartimento di Matematica, Informatica ed Economia, Universit\`a degli
	Studi della Basilicata,
	\newline\indent
	Via dell'Ateneo Lucano 10, I-85100
	Potenza, Italy}
\email{antonio.azzollini@unibas.it}

\author[A. Pomponio]{Alessio Pomponio}
\address{Dipartimento di Meccanica, Matematica e Management,
	Politecnico di Bari
	\newline\indent
	Via Orabona 4,  70125  Bari, Italy}
\email{alessio.pomponio@poliba.it}

\thanks{The authors are supported by PRIN 2017JPCAPN {\em Qualitative and quantitative aspects of nonlinear PDEs}.}
\subjclass[2010]{35J20, 35Q60}
\keywords{Chern-Simon-Schr\"odinger system, static solutions, positive energy, zero mass case}

\begin{document}
	
	\maketitle

	\begin{abstract}
	In this paper we prove  the existence of a positive energy static solution for the Chern-Simons-Schr\"odinger system under a large-distance fall-off requirement on the gauge potentials. We are also interested in existence of ground state solutions. 
	\end{abstract}

	\section{Introduction}

	The following Chern-Simons-Schr\"odinger system 
	\beq \label{eq:e0}\tag{$\mathcal{CSS}$}
	\begin{array}{l}
		i D_0\phi + (D_1D_1 +D_2D_2)\phi + |\phi|^{p-1} \phi =0,\\
		\partial_0 A_1  -  \partial_1 A_0  = {\rm Im}( \bar{\phi}D_2\phi), \\
		\partial_0 A_2  - \partial_2 A_0 = -{\rm Im}( \bar{\phi}D_1\phi), \\
		\partial_1 A_2  -  \partial_2 A_1 =  \frac 1 2 |\phi|^2,
\end{array}
	\eeq
	has been object of interest for many authors, physicists and mathematicians, in the last thirty years. 
	
	For $p=3$, it corresponds to the model proposed by Jackiw-Pi \cite{jackiw0}, and studied also in \cite{hagen, hagen2, jackiw, jackiw1, jackiw2},  to describe the dynamics of a nonrelativistic solitary wave that behaves like a particle, in the three dimensional gauge Chern-Simons theory. 
	
	Here $t \in \R$, $x=(x_1, x_2) \in \R^2$, $\phi :
	\R \times \R^2 \to \C$ is the scalar field, $A_\mu : \R\times \R^2
	\to \R$ are the components of the gauge potential and $D_\mu =
	\partial_\mu + i A_\mu$ is the covariant derivative ($\mu = 0,\
	1,\ 2$).
	\\
	The initial value problem, wellposedness, global existence and blow-up, scattering, etc. have been considered in
	\cite{berge, huh, huh3, liu, tataru, oh} for the case $p=3$. In particular Jackiw and Pi were able to find self-dual solitons deduced by {\it static solutions} of \eqref{eq:e0}  transfomed by means of Galilean boost or conformal invariance.
	\\
	Since, as usual in Chern-Simons theory, problem \eqref{eq:e0} is
	invariant under the gauge transformation
	\beq \label{gauge} \phi \to \phi e^{i\chi}, \quad A_\mu \to A_\mu
	- \partial_{\mu} \chi \eeq 
	for any arbitrary $C^\infty$ function $\chi:\R\times\RD\to\R$, we easily see that the definition of static solution, that is time-indipendent solution, makes sense once we have removed the gauge freedom. In \cite{jackiw0} it has be done assuming the Coulomb gauge choice $\n \cdot\bf A=0$ (here ${\bf A}=(A_1,A_2)$),
	supplemented by large-distance fall-off requirements on the differential equations
	satisfied by $A_0, A_1$ and $A_2$ (see \cite{jackiw2}). In particular, we require that
	\begin{equation}\label{eq:a012}\tag{$\mathcal{FO}$}
	A_0(x)= O(1/|x|),\;
	|{\bf A}(x)|= O(1/|x|),
	\end{equation}
	being this asymptotic behaviour physically relevant, as it is the reflection of the possible presence of, respectively, electric charges and magnetic monopoles.

	The existence of standing waves for \eqref{eq:e0} and general
	$p>1$  has been studied in \cite{BHS,cunha, huh2, AD,AD2,WT,Y}, whereas standing waves with a vortex point have been studied in  \cite{BHS2,JPR} (see also the review paper \cite{P}).
	
	In order to find standing waves, we introduce the following ansatz
	\begin{equation}\label{ansalz}
	\begin{array} {lll} \phi(t,x) = u(|x|) e^{i \omega t}, && A_0(t,x)= A_0(|x|),
	\\
	A_1(t,x)=\dis -\frac{x_2}{|x|^2}h(|x|), && A_2(t,x)= \dis \frac{x_1}{|x|^2}h(|x|),
	\end{array}
	\end{equation}
	where $\o\in\R$ is a given frequency and  $u$ is a radial real valued function that, with an abuse of notation, has to be meant as a one or two variables function according to the situation.\\
In \cite{BHS} the authors proved that $(\phi, A_0,A_1,A_2)$ solves \eqref{eq:e0} if we set 
		$$h(r)=h_u(r)= \frac 12\int_0^r s u^2(s)  \, ds, \qquad r>0,$$
		in the previous ansatz \eqref{ansalz},
		$$A_0(x)= \xi + \int_{|x|}^{+\infty} \frac{h_u(s)}{s} u^2(s)\, ds,$$
		with $\xi \in \R$ arbitrary, and
		$u$ is a solution of the equation
		\beq\label{equation}  
		- \Delta u + \left( \o + \xi+ \dis
		\frac{h_u^2(|x|)}{|x|^2} +  \int_{|x|}^{+\infty} \frac{h_u(s)}{s}
		u^2(s)\, ds   \right)  u  =|u|^{p-1}u, \quad \hbox{ in }
		\RD.
		\eeq
	Therefore, given a standing wave solution
	\begin{equation*}
	\left(u(x)e^{i\o t},\xi + \int_{|x|}^{+\infty} \frac{h_u(s)}{s} u^2(s)\, ds, -\frac{x_2}{|x|^2}h(|x|), \frac{x_1}{|x|^2}h(|x|)\right),
	\end{equation*} 
	we can consider, for any $c\in\R$, the function $\chi(t) = c\, t$ and use the
	gauge invariance \eqref{gauge} to obtain the family of standing wave solutions 
	\begin{equation*}
	\left(u(x)e^{i(\o+c) t},\xi - c + \int_{|x|}^{+\infty} \frac{h_u(s)}{s} u^2(s)\, ds, -\frac{x_2}{|x|^2}h(|x|), \frac{x_1}{|x|^2}h(|x|)\right)_{c\in\R}
	\end{equation*}
	which is characterized by the constant $\omega + \xi$ that results to be a gauge invariant. \\
	In order to differentiate and classify the solutions, as in \cite{jackiw2} we fix the gauge freedom imposing the following  {\it decay at infinity condition} on the potential $A_0$   
	\begin{equation}\label{eq:a0}
	\lim_{|x|\to +\infty}A_0(x)=0.
	\end{equation}
	
	We point out that, assuming the square integrability of $u$ (which, as we are going to show, means that the solution has a finite total charge), our ansatz, together with \eqref{eq:a0}, is consistent with the Coulomb gauge choice $\n \cdot {\bf A} =0$,
	supplemented by large-distance fall-off requirements \eqref{eq:a012}.
	
	According to the above discussion, in what follows we will
	take $\xi=0$ which is a necessary condition for \eqref{eq:a012} as it is assumed for example in \cite{berge, jackiw2}.
	\\
	Equation \eqref{equation}, therefore,  becomes
	\beq\label{corretta}  
	- \Delta u + \left( \o +  \dis
	\frac{h_u^2(|x|)}{|x|^2} +  \int_{|x|}^{+\infty} \frac{h_u(s)}{s}
	u^2(s)\, ds   \right)  u  =|u|^{p-1}u, \quad \hbox{ in }
	\RD,
	\eeq
	Observe that static solutions of \eqref{eq:e0} having the form \eqref{ansalz} are deduced from \eqref{corretta} for $\o=0$. \\
	
	Static solutions of \eqref{eq:e0} deduced from \eqref{corretta} have been found only when $p=3$  in \cite{BHS}. In detail, in \cite{BHS} the authors proved that when $p=3$ solutions to \eqref{eq:e0} satisfying the ansatz \eqref{ansalz} and which have a field of matter that is nowhere zero (in the sense that $u>0$ everywhere) must be static and belong to a one-parameter family which can be explicitly described. In particular, it is quite interesting to observe that such solutions are real valued, differently from the complex valued static field of matter found in \cite{jackiw0}. 
	Both solutions found in \cite{BHS} and those found in \cite{jackiw0}  have zero energy (see \cite[sec.5]{BHS} and \cite[sec.4]{jackiw2}). 	
	
	When $p>1$, $p\neq 3$, equation \eqref{corretta} has been approached by variational methods looking for non-static solutions of \eqref{eq:e0} with $\o>0$. Indeed as showed in
	\cite{BHS}, the equation \eqref{corretta} is nonlocal and it corresponds
	to the Euler-Lagrange equation of the functional $
	I_{\o}: \Hr \to \R$,
	\begin{equation}\label{functional}
	I_{\o}(u)  = \frac 12  \|\nabla u\|_2^2 + \frac \o 2 \|u\|_2^2 
	+\frac{1}{2} \int_{\R^2}\frac{h_u^2 u^2}{|x|^2} dx  - 
	\frac{1}{p+1}  \|u\|^{p+1}_{p+1},
	\end{equation}
	where $$\Hr:=\{u\in \H:u \hbox{ is radially symmetric} \}.$$
	
	Observe that $I_{\o}$ presents a
	competition between the nonlocal term and the local nonlinearity
	of power-type.
	
	When $p>3$, in \cite{BHS} the authors showed that $I_\o$  is unbounded from below and exhibits a mountain-pass geometry.  However the existence of non-static solutions is not so direct, since for $p \in (3,5)$ the Palais-Smale condition is not known to hold. This problem is bypassed by using a constrained minimization taking into account the Nehari and Pohozaev identities. Up to our knowledge, there is no information about the sign of the energy of these solutions. 
	
	Finally, non-static solutions of \eqref{eq:e0} deduced from \eqref{corretta} are found for $p\in(1,3)$ in \cite{BHS} as minimizers on a $L^2$-sphere: here the gauge freedom is exploited to combine the value $\o$ with a Lagrange multiplier, generating a family of non-static, not gauge equivalent solutions which do not in general satify the large-distance falling-off condition. \\
	Later, the result for $p \in  (1,3)$ has been extended in \cite{AD} 
	by investigating the geometry of $I_\o$. 
	Through a careful analysis for a limit equation, the authors showed that there exist $0 < \o_0 < \tilde \o < \bar \o$ such that if $\o> \bar \o$, 
	the unique solutions to \eqref{corretta} are the trivial ones; if $\o_0 < \o < \tilde \o$, there are at least two positive solutions to 
	\eqref{corretta}; if $0 < \o < \o_0$, there is a positive solution to \eqref{corretta} for almost every $\o$. 
	\\
	In particular, in \cite{AD} the authors proved that one of the two solutions found in the interval $(\o_0, \tilde \o)$ has negative energy.\\
	We mention, moreover,  \cite{cunha,huh2} where multiplicity results are provided.\vspace{0.2cm}
	
	Inspired by the original paper by Jackiw and Pi \cite{jackiw0} and the following literature, the aim of this paper is to study \eqref{eq:e0} looking for positive energy solutions. \\
	We recall the following result that can be easily deduced by the definition of energy and charge and direct computations
	\begin{proposition}
		Assume that $(\phi, A_0, A_1, A_2)$ is a solution of \eqref{eq:e0} satifying the ansatz \eqref{ansalz}. Then the energy and the charge of the solution are, respectively, 
		\begin{align}
		E (u)&=\frac 12  \|\nabla u\|_2^2 
		+\frac{1}{2} \int_{\R^2}\frac{h_u^2 u^2}{|x|^2} dx  - 
		\frac{1}{p+1}  \|u\|^{p+1}_{p+1},\label{energy}\\
		Q(u)&=\frac 12 \| u\|_2^2.\nonumber
		\end{align}
	\end{proposition}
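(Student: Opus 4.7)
The strategy is to start from the standard conserved quantities of the Chern-Simons-Schr\"odinger Lagrangian and substitute the ansatz \eqref{ansalz}. Recall that the total energy is
\[
E=\int_{\R^2}\left[\tfrac{1}{2}\bigl(|D_1\phi|^2+|D_2\phi|^2\bigr)-\tfrac{1}{p+1}|\phi|^{p+1}\right]dx,
\]
with no $A_0$ contribution because the Chern-Simons term is first order in time. Likewise the charge is $Q=\tfrac12\int_{\R^2}|\phi|^2\,dx$. The whole computation is purely algebraic once one inserts \eqref{ansalz} and uses the relation $h=h_u$ already established in the excerpt from the third equation of \eqref{eq:e0}.

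The charge part is immediate: since $\phi=u(|x|)e^{i\o t}$ with $u$ real-valued, $|\phi|^2=u^2$, giving $Q(u)=\tfrac12\|u\|_2^2$. For the energy, first I would expand
\[
|D_k\phi|^2=|\partial_k\phi|^2+A_k^2\,|\phi|^2-2A_k\,\mathrm{Im}\bigl(\bar\phi\,\partial_k\phi\bigr),\qquad k=1,2.
\]
The key simplification is that, with $u$ real, $\bar\phi\,\partial_k\phi=u(|x|)u'(|x|)\,x_k/|x|$ is real, so the cross term vanishes identically. I would then use $\partial_k\phi=u'(|x|)\,\tfrac{x_k}{|x|}e^{i\o t}$, together with the explicit formulas $A_1=-x_2h/|x|^2$ and $A_2=x_1h/|x|^2$, to get
\[
|D_1\phi|^2+|D_2\phi|^2=(u')^2+\frac{h^2}{|x|^2}\,u^2.
\]

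Finally, substituting back and recalling that for a radial function $|\nabla u|^2=(u')^2$, while $h=h_u$ by the fourth equation of \eqref{eq:e0} with $h(0)=0$, one obtains
\[
E(u)=\tfrac12\|\nabla u\|_2^2+\tfrac12\int_{\R^2}\frac{h_u^2\,u^2}{|x|^2}\,dx-\tfrac{1}{p+1}\|u\|_{p+1}^{p+1},
\]
which is exactly \eqref{energy}. The only delicate bookkeeping point is the cancellation of the $\mathrm{Im}(\bar\phi\,\partial_k\phi)$ contribution, which relies on the real-valuedness of $u$ baked into the ansatz; everything else is a routine expansion of the covariant derivatives. Since no $A_0$-dependent term appears in $E$, the decay condition \eqref{eq:a0} plays no direct role here and the result is independent of the choice of $\xi$.
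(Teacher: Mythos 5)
Your proof is correct and takes essentially the approach the paper intends: the paper offers no written proof, stating only that the proposition "can be easily deduced by the definition of energy and charge and direct computations," and your substitution of the ansatz into the standard gauged energy $E=\frac12\int_{\R^2}\bigl(|D_1\phi|^2+|D_2\phi|^2\bigr)dx-\frac{1}{p+1}\|\phi\|_{p+1}^{p+1}$ (with no $A_0$ term, the Chern--Simons action being first order in time) is exactly that computation. One cosmetic slip: the cross term in the expansion is $+2A_k\,\mathrm{Im}\bigl(\bar\phi\,\partial_k\phi\bigr)$ rather than $-2A_k\,\mathrm{Im}\bigl(\bar\phi\,\partial_k\phi\bigr)$, but since $\bar\phi\,\partial_k\phi=u\,\partial_k u$ is real under the ansatz, the term vanishes either way and the conclusion is unaffected.
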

	By a comparison between \eqref{functional} and \eqref{energy}, we see that $E=I_0$, that is \eqref{corretta} corresponds to the Euler-Lagrange equation of the functional of the energy, when we are looking for static solutions.
	
	

	From a mathematical point of view, the equation
	\begin{equation}\label{eq:static}
	- \Delta u + \left(  \dis
	\frac{h_u^2(|x|)}{|x|^2} +  \int_{|x|}^{+\infty} \frac{h_u(s)}{s}
	u^2(s)\, ds   \right)  u  =|u|^{p-1}u, \quad \hbox{ in }
	\RD,
	\end{equation}
	falls in that class which is usually called {\em zero mass equations}. A variational approach to it immediately presents several difficulties, starting with the definition of a suitable functional setting.  Indeed, at least formally, solutions of \eqref{eq:static} can be found as critical points of the functional 
	$E$ for which, differently from the case $\o>0$, the space $H^1_r(\RD)$ seems to be ``too small'' to apply the techniques of the calculus of variations in a usual way. On the other hand, the idea of introducing the functional framework as a specific Sobolev space endowed with a norm containing an expression of the nonlocal term (see for example Ruiz' approach in \cite{R}) does not seem to be immediately applicable. In order to overcome this difficulty, we will make use of a perturbation argument as that presented inside \cite{BBS}, where the problem of defining the functional setting is due to the dimension $N=2$, and recovered in \cite{APP} where another type of nonlocal equation is considered in the zero mass case.

Combining equation \eqref{eq:static} with a condition at infinity, the problem reads as follows
\begin{equation}\label{eq}\tag{$\mathcal{P}$}
\begin{cases}
\dis -\Delta u 
+\left(\frac{h_{u}^{2}(|x|)}{|x|^2} 
+ \int_{|x|}^{+\infty}\frac {h_u(s)}{s}u^2(s)\,ds\right)u=
|u|^{p-1}u \qquad \hbox{in } \R^2, 
\\[5mm]
u(x)\to 0 \qquad \hbox{as }|x| \to +\infty,
\end{cases}
\end{equation}
where $u:\R^2 \to \R$ is radially symmetric and
$p>3$.

As a first step, we have to clarify what we mean as {\it solution of \eqref{eq}}. We start with the solutions in the {\em sense of distribution}. 

	\begin{definition}
			We say that a measurable function $u:\RD\to\R$ is a solution of \eqref{eq} in the {\em sense of distribution} if
				\begin{itemize}
					\item[1.] $u$ is in $L^p_{\rm loc}(\RD)$, \\
					\item[2.] for every $\vfi\in C_0^{\infty}(\RD)$
						$$
							\frac{u(x)\vfi(x)}{|x|^2}\left( \int_{B_{|x|}}u^2 dy\right)^2\in L^1(\RD)\hbox{ and }\frac{u^2}{|x|^2}\left( \int_{B_{|x|}}u^2 dy\right)\left( \int_{B_{|x|}}u\vfi\, dy\right)\in L^1(\RD),
						$$
					\item[3.] the operators
						\begin{align*}
							&\vfi\in C_0^{\infty}(\RD)\mapsto \ird\frac{u(x)\vfi(x)}{|x|^2}\left( \int_{B_{|x|}}u^2 dy\right)^2\,dx\\
							&\vfi\in C_0^{\infty}(\RD)\mapsto \ird\frac{u^2}{|x|^2}\left( \int_{B_{|x|}}u^2 dy\right)\left( \int_{B_{|x|}}u\vfi\, dy\right)\,dx
						\end{align*}
						are in $\mathcal D '$,
					\item[4.] for every $\vfi\in C_0^{\infty}(\RD)$
						\begin{multline*}
							\ird -u\Delta\vfi\, dx +  \ird\frac{u(x)\vfi(x)}{|x|^2}\left( \int_{B_{|x|}}u^2 dy\right)^2\,dx\\
							+ \ird\frac{u^2}{|x|^2}\left( \int_{B_{|x|}}u^2 dy\right)\left( \int_{B_{|x|}}u\vfi\, dy\right)\,dx =\ird |u|^{p-1}u\vfi\,dx,
						\end{multline*}
					\item[5.] for every $\delta >0$ the Lebesgue measure of the set $\{x\in\RD : u(x)\ge \d\}$ is finite.
				\end{itemize}
	\end{definition}

Even if solutions in the sense of distribution have of course mathematical relevance, it is absolutely clear that they are in general too weak for having any phisical significance. Indeed observe that, without any global integrability information, we are not able to prevent the infinite energy phenomenon arising, as it is well known, in classical electrodynamics models.

Then we introduce  a new setting
and proceed with the definition of solution in a stronger sense.

\begin{definition}
We define the  
sets $\ch$ and $\chr$ as the completion respectively of $C_0^\infty(\RD)$ and of the set of radial functions in $C_0^\infty(\RD)$ with respect to the norm $\|\cdot\|_{2,4}=\|\n \cdot\|_2+\|\cdot\|_4$.\\ Moreover, 
we denote by 
\begin{equation*}
\calh:=\{u\in \chr : E(u)  \hbox{ is finite} \}.
\end{equation*}
\end{definition}

We will discuss the properties of $\ch$ and $\chr$ in Section \ref{se:ff}.

\begin{definition}\label{def:ws}
Let $u\in \chr$. We say that $u$ is a {\em weak solution} of \eqref{eq}, if it satisfies \eqref{eq:static} in a {\em weak sense}, namely there holds the following equality
\begin{multline}\label{weaksol}
\ird \n u\cdot \n v\, dx +  \ird\frac{h_u^2(|x|)}{|x|^2}uv\,dx\\
+ \ird \left(\int_{|x|}^{+\infty}\frac {h_u(s)}{s}u^2(s)\,ds\right)uv\, dx =\ird |u|^{p-1}u v\,dx,
\end{multline}
for all $v$ in $\H$.
\end{definition}

Finally we give the definition of {\em classical solution}.
\begin{definition}\label{de:cla}
A {\em classical solution} of \eqref{eq} is a radial function $u\in C^2(\RD)$ such that
\[
U_u(x):=
\begin{cases}
\frac{h_{u}^{2}(|x|)}{|x|^2} & \hbox{if }x\neq 0,
\\
0& \hbox{if }x= 0,
\end{cases}
\]
and 
\[
V_u(x):=\int_{|x|}^{+\infty}\frac {h_u(s)}{s}u^2(s)\,ds
\]
are well defined and continuous in $\RD$, $u$ satisfies \eqref{eq:static} pointwise and goes to $0$ as $x$ goes to $\infty$.
\end{definition}

In Proposition \ref{pr:rem}, we will show that Definition \ref{def:ws} and Definition \ref{de:cla} coincide when the energy of the solution is finite, namely every $u\in \calh$ is weak solution of \eqref{eq} if and only if $u$ is a classical solution of \eqref{eq}.

In the Appendix \ref{appendix}, we will study sufficient integrability conditions on $u$ for $U_u$ and $V_u$ to be well defined on $\RD$.

We can state now our first result, which guarantees the existence of a static finite energy solution of system \eqref{eq:e0},  satisfying \eqref{ansalz} and \eqref{eq:a0}.
\begin{theorem}\label{th:main1}
For any $p>3$, there exists $u\in \calh$ classical positive solution of \eqref{eq}. \\
As a consequence the quadruplet $(\phi, A_0, A_1,A_2)$ defined as in \eqref{ansalz} for $\o=0$  is in $C^2(\RD)\times (C^1(\RD))^3$ and it is a static positive energy solution of \eqref{eq:e0} satisfying the following weak formulation of the large-distance fall-off requirement
	\begin{equation*}
			\lim_{|x|\to +\infty}A_0(x)=0,\quad A_1\in L^{\infty}(\RD),\quad A_2\in L^{\infty}(\RD).
	\end{equation*}
\end{theorem}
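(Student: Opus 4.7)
The main difficulty is that the energy functional $E$ is naturally defined on $\calh \subset \chr$, a space without a clean minimax structure, so we cannot attack \eqref{eq:static} directly. The plan is to use a vanishing-mass perturbation argument, in the spirit of \cite{BBS,APP}. For each $\eps > 0$ we will work on $\Hr$ with the perturbed functional
\beq
I_\eps(u) = E(u) + \frac{\eps}{2}\|u\|_2^2,
\eeq
which for $p > 3$ has mountain pass geometry. Following the Nehari--Pohozaev constrained minimization scheme of \cite{BHS}, we will produce a positive radial critical point $u_\eps \in \Hr$ at a strictly positive minimax level $c_\eps$.

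The crucial technical task will be to obtain bounds on $u_\eps$ independent of $\eps$. Combining the Nehari and Pohozaev identities for $I_\eps$ we will extract control on $\|\n u_\eps\|_2$, $\|u_\eps\|_4$ and $\|u_\eps\|_{p+1}$, while $\eps\|u_\eps\|_2^2 \to 0$ as $\eps \to 0$. The upper bound on $c_\eps$ will come from evaluating $I_\eps$ on a fixed compactly supported radial test function (independent of $\eps$), and a positive lower bound on $c_\eps$, arising from the mountain-pass geometry, will prevent the family from dissolving. Along a subsequence we will then have $u_\eps \weakto u$ in $\chr$.

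The main obstacle will be passing to the limit. Strauss-type radial decay will yield pointwise a.e.\ and locally strong convergence; since $h_{u_\eps}(r) = \frac{1}{4\pi}\int_{B_r} u_\eps^2\, dy$ is nondecreasing in $r$ and uniformly controlled on compact sets, monotone and dominated convergence will identify the limits of the two nonlocal terms in the weak formulation \eqref{weaksol}, so that $u$ is a weak solution of \eqref{eq}. Nontriviality of $u$ is the delicate point: we will exclude vanishing by means of the positive lower bound on $c_\eps$ via a Lions-type concentration argument, concentration at the origin will be penalized by the singular weight $|x|^{-2}$ appearing in the nonlocal term, and escape to infinity is ruled out by radiality.

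Finally, standard elliptic regularity will upgrade $u$ to $u \in C^2(\RD)$, hence by Proposition \ref{pr:rem} to a classical solution in $\calh$; positivity of $u$ follows from the positivity of the approximants and the strong maximum principle applied to the limiting equation. As for the gauge quantities, $h_u(r)$ is nondecreasing, bounded above by $\tfrac{1}{4\pi}\|u\|_2^2$ and behaves like $\tfrac14 r^2 u^2(0)$ near $0$, so $h_u(r)/r$ is bounded on $\R^+$, yielding $A_1, A_2 \in L^\infty(\RD)$; the decay $A_0(x) \to 0$ is immediate from the tail definition of $A_0$; and the positivity $E(u) > 0$ is inherited from the strict positivity of the limiting mountain-pass level.
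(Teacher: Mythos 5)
Your overall scaffolding matches the paper's: perturb with $\frac{\eps}{2}\|u\|_2^2$, obtain positive critical points $u_\eps\in\Hr$ via the Nehari--Pohozaev constrained minimization of \cite{BHS} at levels $m_\eps$ bounded above and below uniformly in $\eps$, and extract a weak limit in $\chr$. But the heart of the paper's proof is exactly where your plan is thinnest: passing to the limit in the nonlocal terms. Monotone and dominated convergence do \emph{not} suffice, because the weak formulation tested against a fixed $v$ involves integrals over all of $\RD$ whose tails are not controlled by the available bounds. Concretely, for $v$ supported in $B_{R_0}$ the term $\ird \frac{h_{u_n}}{|x|^2}u_n^2\bigl(\int_{B_{|x|}}u_n v\,dy\bigr)dx$ carries a tail of order $\bigl(\int u_n v\bigr)\int_{B_R^c}\frac{h_{u_n}}{|x|^2}u_n^2\,dx$; the only cheap estimate, $h_{u_n}(|x|)/|x|\le C\|u_n\|_4^2$, reduces this to $\int_{B_R^c}u_n^2/|x|\,dx$, which cannot be made small since $|x|^{-2}\notin L^1(B_R^c)$. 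The paper obtains the needed uniform tail smallness \eqref{pippa} precisely by running the full Lions concentration-compactness machinery on the measures $\nu_n$ and \emph{excluding dichotomy} -- the longest and most delicate lemma of Section \ref{se:3}, involving the cut-off decomposition $u_n=v_n+w_n$ and projections back onto $\M_{\e_n}$ -- and then combines it with the elementary splitting $a^{3/2}\le 1+a^2$ in Propositions \ref{pr:main} and \ref{pr:finallemma} (yielding, e.g., the strong convergence \eqref{u011}). Your plan invokes Lions only to exclude vanishing (for nontriviality), never addresses dichotomy, and your remark that concentration at the origin is ``penalized by the singular weight'' plays no role in the actual argument.

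A second genuine gap is the positivity of the energy. You claim $E(u)>0$ is ``inherited from the strict positivity of the limiting mountain-pass level,'' but weak convergence plus the compact embedding $\chr\hookrightarrow L^{p+1}(\RD)$ give only $E(u_0)\le\liminf_n I_{\eps_n}(u_{\eps_n})$ -- the inequality points the wrong way, since energy can leak in the limit. The paper instead proves the Nehari identity \eqref{nehari} and the Pohozaev identity \eqref{poho} \emph{directly for the limit solution} (Propositions \ref{pr:nehari} and \ref{pr:poho}); both require ad-hoc arguments (the cut-offs $\psi_n$, the Berestycki--Lions-type $\liminf$ on spheres) precisely because $u_0\in L^2(\RD)$ is unknown, and they yield $E(u_0)$ as the manifestly positive combination \eqref{eq:I}, with a quantitative lower bound via \eqref{eq:5.1} (Proposition \ref{pr:posen}). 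Relatedly, your justification of $A_1,A_2\in L^\infty(\RD)$ through $h_u\le\frac{1}{4\pi}\|u\|_2^2$ presupposes finite charge, which is exactly what fails to be available in this zero-mass setting (it is recovered only for $p>9$ in Theorem \ref{th:ul2}); the correct estimate is $h_u(|x|)/|x|\le C\|u\|_4^2$ as in item \eqref{hlinfinito} of Proposition \ref{pr:main}, and even the finiteness and continuity of $V_{u_0}$, needed for $\lim_{|x|\to+\infty}A_0(x)=0$, require the splitting \eqref{lecca}--\eqref{culo} rather than being immediate.
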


In the previous result, the positiveness of the energy is a consequence of Nehari and Pohozaev identities (see Proposition \ref{pr:posen}). We underline that the failure to use variational methods to find solutions
causes non-trivial difficuties in deducing these identities. In particular, the fundamental Nehari and Pohozaev identities are not immediately available by means of direct computations based on standard arguments as in \cite{BHS}, but they both require quite tricky ad-hoc strategies. 

These identities also play a key role in view of an analysis of the energy levels and in particular in order to estimate the {\em zero-point energy} of our system. The crucial question of establishing whether a ground state (at least limiting to static waves satisfying our ansatz) exists, translates into a minimum problem consisting in minimizing the functional of the energy in the set of solutions in $\calh$. Observe that, since by Theorem \ref{th:main1} the set
\begin{equation}\label{esse}
\S:=\{u\in \calh\setminus\{0\} :  u \hbox{ is a classical solution of \eqref{eq} } \}
\end{equation}
is not empty, and by positiveness of energy the set $\{E(u): u\in\S\}$ is bounded below, the minimizing problem makes sense.

Actually, we will prove that the infimum is attained.
\begin{theorem}\label{th:gs}
For any $p>3$, there exists a non-trivial radial {\em ground state}, namely there exists  $\bar u\in \S$ such that
\[
E(\bar u)=\inf_{u\in \S}E(u).
\]
\end{theorem}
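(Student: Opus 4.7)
The strategy is the direct method in the function space $\chr$. First, take a minimizing sequence $(u_n)\subset\S$ with $E(u_n)\to c:=\inf_{\S}E$, which is well-defined since $E\ge 0$ on $\S$. The essential quantitative input is the pair of Nehari and Pohozaev identities satisfied by every element of $\S$ (cf.\ the discussion preceding Proposition \ref{pr:posen}). A direct combination of the two yields
\[
\ird\frac{h_{u_n}^{2} u_n^{2}}{|x|^2}\,dx=\frac{1}{p+1}\|u_n\|_{p+1}^{p+1},\qquad \|\n u_n\|_2^{2}=\frac{p-2}{p+1}\|u_n\|_{p+1}^{p+1},
\]
and hence $E(u_n)=\tfrac{p-3}{2(p+1)}\|u_n\|_{p+1}^{p+1}$. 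Since $p>3$, this simultaneously controls $\|\n u_n\|_2$, $\|u_n\|_{p+1}$ and the nonlocal quantity uniformly in $n$, and, together with the properties of $\chr$ from Section \ref{se:ff}, bounds $(u_n)$ in $\chr$.

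The second step is to extract a weak limit. Up to a subsequence, $u_n\rightharpoonup\bar u$ in $\chr$, $u_n\to\bar u$ a.e.\ on $\R^2$, and, by the Strauss-type compactness for radial functions with control of $\|\n\cdot\|_2+\|\cdot\|_4$, $u_n\to\bar u$ strongly in $L^{p+1}(\R^2)$. To guarantee $\bar u\not\equiv 0$ one needs $c>0$. This I would derive by combining the Nehari identity $\|\n u_n\|_2^{2}\le\|u_n\|_{p+1}^{p+1}$ with a Gagliardo--Nirenberg inequality in $\chr$, which forces a uniform lower bound $\|u_n\|_{p+1}\ge c_0>0$ on $\S$; strong $L^{p+1}$-convergence then yields $\|\bar u\|_{p+1}>0$.

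The third and most delicate step is to pass to the limit in the weak formulation \eqref{weaksol} with an arbitrary test function $v\in\H$. The Dirichlet term is handled by weak convergence in $\chr$ and the power term by strong $L^{p+1}$-convergence; the work is in the two nonlocal integrals. Using $h_{u_n}(r)=\tfrac{1}{4\pi}\int_{B_r}u_n^{2}$ and the strong local convergence of $u_n$ in $L^{2}_{\mathrm{loc}}$ (Rellich applied to $\|\n u_n\|_2$), I would first check pointwise convergence $h_{u_n}(r)\to h_{\bar u}(r)$ and uniform bounds $0\le h_{u_n}(r)\le C$; combined with the a.e.\ convergence of $u_n$ and the strong $L^{p+1}$-convergence this is enough to apply dominated/Vitali convergence to both $\ird h_{u_n}^{2}u_n v/|x|^2\,dx$ and $\ird\left(\int_{|x|}^{\infty}h_{u_n}(s)u_n^{2}(s)/s\,ds\right)u_n v\,dx$, producing $\bar u$ as a weak solution.

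To conclude, weak lower semicontinuity of $E$ (Fatou on the nonlocal term, strong convergence of the nonlinear term) gives $E(\bar u)\le c<\infty$, so $\bar u\in\calh$; Proposition \ref{pr:rem} then upgrades $\bar u$ to a classical solution, and since $\bar u\neq 0$ we have $\bar u\in\S$, whence $E(\bar u)\ge c$. Finally the identity $E(\bar u)=\tfrac{p-3}{2(p+1)}\|\bar u\|_{p+1}^{p+1}=\tfrac{p-3}{2(p+1)}\lim\|u_n\|_{p+1}^{p+1}=\lim E(u_n)=c$ forces equality. The main obstacle is the passage to the limit in the two nonlocal terms, complicated by the zero-mass character of the problem and the nonstandard space $\chr$ in which classical Sobolev compactness is unavailable; a subordinate but essential point is the strict positivity $c>0$, since without $L^{2}$-control the standard lower bound for a nontrivial solution is not immediately at hand and must be extracted from the Nehari identity together with a CSS-tailored Gagliardo--Nirenberg estimate in $\chr$.
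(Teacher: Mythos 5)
Your overall skeleton matches the paper's proof: minimizing sequence in $\S$, boundedness via the combined Nehari--Pohozaev information, weak limit plus the compact embedding of $\chr$ into $L^{p+1}(\RD)$, nontriviality from $\inf_{\S}E>0$ (this is exactly Proposition \ref{pr:posen}, proved with the inequality \eqref{eq:5.1} that you call a CSS-tailored Gagliardo--Nirenberg estimate), and your closing identity $E(\bar u)=\tfrac{p-3}{2(p+1)}\|\bar u\|_{p+1}^{p+1}$ combined with strong $L^{p+1}$-convergence is a legitimate, even slightly more direct, way to finish than the paper's balancing of weak lower semicontinuity against Fatou in \eqref{eq:str}. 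However, your third step --- the one you yourself single out as the delicate one --- contains a genuine gap. The claimed uniform bound $0\le h_{u_n}(r)\le C$ is false: boundedness in $\chr$ only gives $h_{u_n}(r)\le C\,r\,\|u_n\|_4^2$, i.e.\ $h_{u_n}/|x|\in L^\infty$ uniformly (Proposition \ref{pr:main}, part (\ref{hlinfinito})); a uniform bound on $h_{u_n}$ itself would require a uniform $L^2(\RD)$ bound on $u_n$, which is precisely what is \emph{not} available in this zero-mass setting (finite charge is only established for $p>9$, Theorem \ref{th:ul2}). More fundamentally, dominated/Vitali convergence on all of $\R^2$ requires equi-integrability at infinity of $\frac{h_{u_n}^2u_n^2}{|x|^2}$ and of the kernel of the $V_{u_n}$-term, and neither boundedness in $\chr$ nor the radial decay $|u_n(x)|\le C|x|^{-\tau}$, $\tau<\tfrac14$, of Proposition \ref{pr:radial} provides it, since $|x|^{-2\tau}$ is not integrable at infinity. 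Note also that $V_{u_n}(x)=\int_{|x|}^{+\infty}\frac{h_{u_n}(s)}{s}u_n^2(s)\,ds$ sees the tails of $u_n$ even when the test function is compactly supported, so the difficulty cannot be localized away.

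The missing idea is the concentration-compactness step. In the paper, the uniform tail estimate --- for every $\d>0$ there is $R_\d$ with $\int_{B_{R_\d}^c}\frac{h_{u_n}^2u_n^2}{|x|^2}\,dx\le\d$ for all $n$, see \eqref{pippa} --- is obtained by running Lions' trichotomy on the measures $\nu_n$, excluding vanishing (Lions' lemma plus the lower bound on $\|u_n\|_{p+1}$) and dichotomy (a cut-off argument on the Pohozaev--Nehari manifold), i.e.\ Proposition \ref{conc}; this is what makes Propositions \ref{pr:main} and \ref{pr:finallemma} work and shows that the weak limit solves \eqref{eq}. In the proof of Theorem \ref{th:gs} the same machinery is rerun for the minimizing sequence in $\S$, with the identity \eqref{neharipoho} playing the role of $J_{\e_n}(u_n)=0$. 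Without this step, mass of the nonlocal density may escape to infinity and $\bar u$ would then satisfy a different equation; in that case you could not invoke the Nehari and Pohozaev identities for $\bar u$, and the final chain of equalities $E(\bar u)=\tfrac{p-3}{2(p+1)}\lim_n\|u_n\|_{p+1}^{p+1}=\lim_n E(u_n)$ would not even start. Replace the dominated-convergence claim by the concentration argument (or an equivalent uniform decay estimate for $\int_{B_R^c}\frac{h_{u_n}^2u_n^2}{|x|^2}\,dx$), and the rest of your proof goes through.
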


As for the energy, the estimate of the total charge of our static wave presents analogous difficulties due to the particular zero mass structure of equation \eqref{eq:static}. 
In addition to evident problems related with the possibility that the total charge may be infinite, by \eqref{ansalz} this fact is reflected in \eqref{eq:a012} which is, in general, hard to verify. However, a priori considerations, based on a comparison argument, lead to the following (quite surprising) result
 
\begin{theorem}\label{th:ul2}
Assume that $p>9$ and let $u$ be the solution found in Theorem \ref{th:main1}. Then $u$ has finite total charge (that is $u$ is in $L^2(\RD)$) and the corresponding quadruplet $(\phi, A_0, A_1,A_2)$ is a positive energy static solution of \eqref{eq:e0} satisfying \eqref{eq:a012} .
\end{theorem}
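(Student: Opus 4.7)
My plan is to argue by contradiction. Since $u$ is radial, $u\in L^2(\RD)$ is equivalent to $h_u(+\infty):=\lim_{r\to +\infty}h_u(r)<+\infty$, so I assume $h_u(+\infty)=+\infty$ and derive that $u$ must in fact decay faster than any polynomial in $|x|$, forcing $u\in L^2(\RD)$ and giving a contradiction. Throughout, write $W(x):=h_u^2(|x|)/|x|^2+V_u(x)$ for the potential appearing in \eqref{eq:static}, and note $W\ge h_u^2(|x|)/|x|^2$. The three ingredients are: a Strauss-type radial estimate for $\chr$; the integrated Nehari-type identity $\ird Wu\,dx=\ird u^p\,dx$ obtained by passing to the limit in the radial ODE; and a Sturm-type self-improvement argument for the tail of $u$.

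For the first ingredient, splitting $uu'(s)=(s^{1/2}u')(s^{1/4}u)s^{-3/4}$ and applying H\"older with exponents $(2,4,4)$ to $u^2(r)=-2\int_r^{+\infty}uu'\,ds$ yields $u(r)\le C r^{-1/4}$ for $r\ge 1$. Since $p>8$ this gives $\int_0^{+\infty}s\,u^p(s)\,ds<+\infty$, and, combined with the finiteness of $\ird Wu^2\,dx$ coming from the Nehari and Pohozaev identities (Proposition \ref{pr:posen}), one checks that $ru'(r)\to 0$ in the integrated form $ru'(r)=\int_0^r s(Wu-u^p)(s)\,ds$, whence $\ird Wu\,dx=\ird u^p\,dx$; in particular $\int_0^{+\infty}h_u^2(r)u(r)/r\,dr<+\infty$.

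Now the heart of the proof. Suppose $h_u(+\infty)=+\infty$; given $M>0$, choose $r_M$ with $h_u(r)\ge M$ for $r\ge r_M$, so $W(r)\ge M^2/r^2$ there. Since $p>9$ forces $(p-1)/4>2$, the Strauss bound gives $u^{p-1}(r)\le C^{p-1}r^{-(p-1)/4}=o(r^{-2})$, so on some tail $[r_1,+\infty)$ we have $u^p\le Wu/2$. There $(ru')'=r(Wu-u^p)\ge 0$, so $ru'$ is non-decreasing; combined with $ru'(+\infty)=0$, this yields $ru'\le 0$ and $u$ non-increasing on $[r_1,+\infty)$. Integrating $(ru')'\ge rWu/2\ge h_u^2(r)u/(2r)$ from $r$ to $+\infty$ gives
\begin{equation*}
-ru'(r)\ge \frac{h_u^2(r)}{2}\int_r^{+\infty}\frac{u(s)}{s}\,ds\ge \frac{M^2}{2}\,\Phi(r), \qquad \Phi(r):=\int_r^{+\infty}\frac{u(s)}{s}\,ds.
\end{equation*}
Integrating $-u'(r)/r\ge (M^2/(2r^2))\Phi(r)$ twice, using $u(+\infty)=\Phi(+\infty)=0$, and applying Fubini produces the self-improvement
\begin{equation*}
\Phi(r)\ge \frac{M^2}{2}\int_r^{+\infty}\frac{\Phi(s)}{s}\log\frac{s}{r}\,ds\ge \frac{M^2(\log 2)^2}{4}\,\Phi(2r),
\end{equation*}
the last step by restriction to $[r,2r]$ and the monotonicity of $\Phi$. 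For $M$ so large that $M^2(\log 2)^2/4>1$, iterating yields $\Phi(r)\le C r^{-\beta(M)}$ with $\beta(M)\to +\infty$ as $M\to +\infty$; since $u$ is non-increasing, $u(2r)\log 2\le \int_r^{2r}u(s)/s\,ds\le \Phi(r)$, so the same polynomial decay transfers to $u$, giving $u(r)=O(r^{-\beta})$ for arbitrary $\beta>0$. In particular $u\in L^2(\RD)$, contradicting the starting assumption.

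Once $u\in L^2(\RD)$ is known, the fall-off \eqref{eq:a012} follows immediately from \eqref{ansalz}: $|{\bf A}(x)|=h_u(|x|)/|x|\le h_u(+\infty)/|x|$, and using $h_u\le h_u(+\infty)$ together with $1/s\le s/|x|^2$ for $s\ge |x|$ gives $A_0(x)\le h_u(+\infty)|x|^{-2}\int_{|x|}^{+\infty}s\,u^2(s)\,ds\le 2h_u(+\infty)^2/|x|^2$, both of order $O(1/|x|)$. I expect the main obstacle to lie in the double integration leading to the self-improvement inequality: one must carefully justify the monotonicities of $u$, $\Phi$, and $ru'$ on the tail and the exchanges of limits and applications of Fubini. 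The role of the threshold $p>9$ is transparent in this strategy: it is exactly the condition that makes the Strauss decay $u(r)\lesssim r^{-1/4}$ strong enough to dominate the Hardy-type lower bound $M^2/r^2$ on $W$.
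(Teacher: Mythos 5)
Your proof is correct in substance and takes a genuinely different route from the paper's. The paper also argues by contradiction and uses the same two pillars — the Strauss-type decay of Proposition \ref{pr:radial} (with $p>9$ entering exactly as in your proposal, through $\tau(p-1)>2$ with $\tau<1/4$, so that $u^{p-1}=o(|x|^{-2})$ is dominated by the Hardy-type lower bound on the nonlocal potential) and the fact that infinite charge makes $h_u^2(|x|)/|x|^2\ge \gamma/|x|^2$ with $\gamma>1$ on a tail — but it implements the comparison at the level of the approximating family: if $u_0\notin L^2(\RD)$, then $\|u_0\|^4_{L^2(B_{R_\sigma})}>16\pi^2$ for some $R_\sigma$, local $L^2$-convergence transfers this uniformly to the $u_n=u_{\e_n}$, and $u_n$ is compared on $B_{R_\sigma}^c$ with the explicit solution $w_n(x)=u_n(R_\sigma)R_\sigma^{\sqrt\gamma}|x|^{-\sqrt\gamma}$ of $-\Delta w+\gamma |x|^{-2}w=0$ by testing with $(u_n-w_n)^+\in H^1_0(\RD\setminus\overline{B_{R_\sigma}})$; since $\sqrt\gamma>1$, this yields a uniform $L^2$-bound on $\{u_n\}_n$ and hence $u_0\in L^2(\RD)$ by weak limits. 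Working with $u_n\in \Hr$ is precisely what makes this maximum-principle step painless: a priori $u_0\notin H^1(\RD)$, so testing the limit equation with $(u_0-w)^+$ would be delicate, and your ODE-based self-improvement scheme sidesteps this by exploiting that $u$ is a classical solution and running everything pointwise in the radial variable. What your approach buys: it applies verbatim to \emph{any} positive classical finite-energy solution (so to every element of $\S$, in particular to the ground state of Theorem \ref{th:gs}, a conclusion the paper's sequence-bound argument does not directly give), and it produces arbitrarily fast polynomial decay rather than the single rate $|x|^{-\sqrt\gamma}$; what the paper's approach buys is that it never needs the boundary behaviour $ru'(r)\to 0$ at infinity, which is the one nontrivial analytic fact your route must supply. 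On that point, two small repairs: the finiteness of $\ird\bigl(h_u^2/|x|^2+V_u\bigr)u^2\,dx$ is not Proposition \ref{pr:posen} but the definition of $\calh$ combined with the Fubini identity $\ird V_u u^2\,dx=2\ird h_u^2u^2/|x|^2\,dx$ used in the proof of Proposition \ref{pr:poho}; and in deriving $ru'(r)\to 0$ you should note that, since $\int_0^r sWu\,ds$ is monotone and $\int_0^\infty su^p\,ds<\infty$ (here $p>8$ suffices), the limit $L$ of $ru'(r)$ exists in $[-B,+\infty]$, after which $L>0$ is excluded by $u(r)\to 0$ and $L<0$ by $u>0$, so $L=0$ and in particular $\int_0^\infty h_u^2(s)u(s)s^{-1}\,ds<\infty$. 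With these details filled in, the double integration, the Fubini exchange, and the dyadic iteration $\Phi(r)\ge \frac{M^2(\log 2)^2}{4}\Phi(2r)$ are all sound, and a single $M$ with $M^2(\log 2)^2/4>2$ already yields $\beta>1$ and the contradiction; your verification of \eqref{eq:a012} from $u\in L^2(\RD)$ is also correct and in fact more explicit than the paper's appeal to the Strauss lemma, since it gives $A_0(x)=O(|x|^{-2})$.
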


This paper is organized as follows. 
\\
In Section \ref{se:ff}, we present the functional framework introducing some useful properties of the spaces $\ch$ and $\chr$.  
\\
Section \ref{se:3} is devoted to the most of the proof of Theorem \ref{th:main1} (positive energy of our static solution is a consequence of Proposition \ref{pr:posen} in Section \ref{se:gs}). Following \cite{APP,BBS}, as first step, roughly speaking we add a positive mass to the functional $E$; more precisely, for any $\e>0$, we consider the following {\em perturbed}  functional
\begin{equation*}
I_\e(u)=\frac 12 \|\n u\|_2^2+\frac \e 2\|u\|_2^2
+\frac 12 \ird  \frac{h^2_u u^2}{|x|^2} dx
-\frac{1}{p+1} \|u\|_{p+1}^{p+1},
\end{equation*}
defined in $\Hr$. By \cite{BHS}, it is easy to see that there exists a critical point $u_\e$ of $I_\e$, for any $\e>0$. The second step consists in studying the behaviour of the family $\{u_\e\}_{\e>0}$, as $\e\searrow 0$. By concentration-compactness arguments, we show that, up to a subsequence, there exists $u_0\in \calh$ such that the family converges weakly to such $u_0$ in $\chr$, as $\e\searrow 0$. This will be enough to prove that, actually, $u_0$ is the desired  solution.
\\
In Section \ref{se:gs}, we perform a deep analysis of the properties related with the energy of our static wave, and prove Theorem \ref{th:gs}. An interesting consequence of this study and the result in \cite{BHS} is the existence of a continuum of positive energy non-static standing waves stated in the Corollary \ref{chepalle}. Moreover, the existence of a ground state will be obtained, again by a concentration-compactness argument, by means of Nehari and Pohozaev identities holding for \eqref{eq}.
\\
Finally, in Section \ref{se:carica} we show that, when $p>9$, our static wave has finite total charge and Theorem \ref{th:ul2} holds. The proof is based on a contradiction argument and a precise estimate of the decay at infinity of the solution will play a crucial role.

We conclude this introduction fixing some notations. For any $\tau\ge 1$, we denote by $L^\tau(\R^2)$ the usual Lebesgue spaces equipped by the standard norm $\|\cdot\|_{\tau}$.
In our estimates, we will frequently denote by $C>0$, $c>0$ fixed
constants, that may change from line to line, but are always
independent of the variable under consideration. 
Moreover, for any $R>0$, we denote by $B_R$ the ball of $\RD$ centred in the origin with radius $R$.
Finally the letters
$x$, $y$ indicate two-dimensional variables and $r$, $s$ denote
one-dimensional variables.

\section{Functional framework}\label{se:ff}

In this section we introduce the functional framework presenting some useful properties of the spaces $\ch$ and $\chr$.

The following inequality will play an essential role in our arguments. It is essentially already contained in \cite{BHS}, where it is proved for $H^1_r(\RD)$ functions (see \cite[Proposition 2.4]{BHS}), but actually it holds also in $\chr$. The proof is based on the same density argument used in \cite{BHS} after having showed its validity in $C_0^\infty(\RD)$ and therefore we omit it.
\begin{proposition} \label{prop:dis} 
For any $u \in \chr$, the following inequality holds:
\begin{equation} \label{eq:5.1}
\|u\|^4_4
\le 4 \| \nabla u\|_2 \left( \ird \frac{h_u^2 u^2}{|x|^2} dx \right)^{\frac{1}{2}}.
\end{equation}
\end{proposition}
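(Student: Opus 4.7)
The plan is to establish the inequality first on radial functions in $C_0^\infty(\RD)$ by direct computation, and then extend it to $\chr$ by the density argument of \cite[Proposition 2.4]{BHS}.

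For the first step, writing $u = u(r)$ with $r=|x|$ and recalling that $h_u(r) = \frac{1}{2}\int_0^r s u^2(s)\,ds$ satisfies $h_u'(r) = \frac{1}{2}r u^2(r)$ and $h_u(0) = 0$, I would pass to polar coordinates and use the identity $r u^2 = 2 h_u'$ to write
\begin{equation*}
\|u\|_4^4 = 2\pi \int_0^\infty r u^4\,dr = 4\pi \int_0^\infty u^2 h_u'\,dr.
\end{equation*}
Since $u$ has compact support and $h_u(r) = O(r^2)$ near the origin, integration by parts yields
\begin{equation*}
\|u\|_4^4 = -8\pi \int_0^\infty u\,u'\,h_u\,dr.
\end{equation*}
Factoring $u\, u'\, h_u = (\sqrt{r}\,u')\cdot(u h_u/\sqrt{r})$ and applying the Cauchy--Schwarz inequality in $L^2((0,\infty); dr)$ gives
\begin{equation*}
\|u\|_4^4 \leq 8\pi \left(\int_0^\infty r|u'|^2\,dr\right)^{1/2} \left(\int_0^\infty \frac{u^2 h_u^2}{r}\,dr\right)^{1/2}.
\end{equation*}
The two factors equal $(2\pi)^{-1/2}\|\nabla u\|_2$ and $(2\pi)^{-1/2}\bigl(\ird h_u^2 u^2/|x|^2\,dx\bigr)^{1/2}$ respectively, so the $8\pi$ prefactor collapses to $4$, which establishes the inequality on $C_0^\infty(\RD)$ radial functions.

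To pass from smooth radial functions to $\chr$, I would take $(u_n) \subset C_0^\infty(\RD)$ radial converging to $u$ in $\|\cdot\|_{2,4}$. The left-hand side of \eqref{eq:5.1} is continuous along this convergence because $u_n\to u$ in $L^4$, and $\|\nabla u_n\|_2 \to \|\nabla u\|_2$. Extracting a subsequence so that $u_n \to u$ almost everywhere and recalling $h_u(r) = \frac{1}{4\pi}\int_{B_r} u^2\,dx$, the local $L^2$-convergence (itself a consequence of local $L^4$-convergence via Cauchy--Schwarz on $B_r$) gives $h_{u_n}(r) \to h_u(r)$ for every $r > 0$. Then the inequality at level $n$ passes to the limit via a dichotomy: if $\ird h_u^2 u^2/|x|^2\,dx = +\infty$ the claim is trivial, while if the nonlocal term is finite one invokes Fatou's lemma together with an equi-integrability argument (as in \cite{BHS}) to recover the right-hand side for $u$.

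The main obstacle is precisely this density extension, because the nonlocal term sits on the right-hand side of the inequality and Fatou's lemma alone produces lower semicontinuity in the unhelpful direction. The resolution, modelled on \cite{BHS}, is the dichotomy above, complemented in the finite case by a uniform control over balls of the quantity $h_{u_n}^2 u_n^2/|x|^2$ via the local compactness available for radial functions. By contrast, the $C_0^\infty$ computation is entirely routine once the structural identity $h_u' = \frac{1}{2}r u^2$ is exploited, and the sharp constant $4$ emerges naturally from the symmetric Cauchy--Schwarz splitting combined with the factor $2\pi$ bookkeeping.
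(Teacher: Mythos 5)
Your computation on radial $C_0^\infty(\RD)$ functions is correct and is precisely the argument behind \cite[Proposition 2.4]{BHS} that the paper invokes: the identity $h_u'(r)=\frac12 ru^2(r)$, one integration by parts (legitimate since $h_u(0)=0$ and $u$ has compact support), and the symmetric Cauchy--Schwarz splitting $uu'h_u=(\sqrt r\,u')(uh_u/\sqrt r)$, with the $2\pi$ bookkeeping collapsing $8\pi$ to the constant $4$. Since the paper omits its proof, saying only that one first verifies the inequality on $C_0^\infty(\RD)$ and then runs the density argument of \cite{BHS}, up to this point you coincide with the intended proof.

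The gap is in your density step, and you have in fact located it without repairing it. Fatou gives $\ird \frac{h_u^2u^2}{|x|^2}\,dx\le\liminf_n \ird \frac{h_{u_n}^2u_n^2}{|x|^2}\,dx$, whereas to pass to the limit in \eqref{eq:5.1} you need the opposite bound $\liminf_n \ird \frac{h_{u_n}^2u_n^2}{|x|^2}\,dx\le \ird \frac{h_u^2u^2}{|x|^2}\,dx$ along the approximating sequence. Your proposed remedy, uniform control ``over balls'' via radial local compactness, only handles $B_R$, where dominated convergence already works (because $h_{u_n}(|x|)/|x|\le C\|u_n\|_4^2$ and $u_n\to u$ in $L^2_{\rm loc}$); the obstruction lives in the tails. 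There the only available estimate is $\int_{B_R^c}\frac{h_{u_n}^2u_n^2}{|x|^2}\,dx\le C\|u_n\|_4^4\int_{B_R^c}u_n^2\,dx$, and $\int_{B_R^c}u_n^2\,dx$ is not controlled by $\|\cdot\|_{2,4}$: functions of $\chr$ need not belong to $L^2(\RD)$, and the radial decay $|u_n(r)|\le Cr^{-\tau}$ with $\tau<\frac14$ (Proposition \ref{pr:radial}) is far from making $u_n^2$ integrable at infinity. Indeed, one can graft onto an approximating sequence radial annular bumps that vanish in $\|\cdot\|_{2,4}$ yet make the nonlocal term blow up, so no equi-integrability over $\RD$ holds for an arbitrary sequence; this is exactly where the $H^1$ argument of \cite{BHS} (which uses global $L^2$-convergence to kill the tails) fails to transfer verbatim to $\chr$. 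The repair is to \emph{choose} the sequence so that the bound comes in the helpful direction: approximate $u$ by the truncations $v_M=k_Mu$ of Proposition \ref{pr:density}. Since $|v_M|\le |u|$ pointwise, one has $h_{v_M}\le h_u$ and hence $\ird \frac{h_{v_M}^2v_M^2}{|x|^2}\,dx\le \ird \frac{h_u^2u^2}{|x|^2}\,dx$ with no semicontinuity argument at all; moreover $v_M$ has compact support and lies in $L^4$, so $v_M\in \Hr$ and \cite[Proposition 2.4]{BHS} applies to it directly (alternatively, mollify $v_M$ and use your smooth case). Letting $M\to+\infty$, the convergences $\|v_M\|_4\to\|u\|_4$ and $\|\nabla v_M\|_2\to\|\nabla u\|_2$ established in Proposition \ref{pr:density} then yield \eqref{eq:5.1} for every $u\in\chr$, including trivially the case where the right-hand side is infinite.
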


\begin{remark}
We observe that the right hand side in inequality \eqref{eq:5.1} could be also infinity, while it is surely finite if $u\in \chr$ with finite energy.
\end{remark}

\begin{proposition}\label{pr:ref}
$(\ch,\|\cdot\|_{2,4})$ is a reflexive Banach space.
\end{proposition}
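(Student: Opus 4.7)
The plan is to realize $\mathcal{H}^{2,4}(\mathbb{R}^2)$ as a closed (isometrically embedded) subspace of a reflexive product Banach space, and then invoke the standard fact that closed subspaces of reflexive spaces are reflexive.

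First I would introduce the auxiliary product space $X := L^2(\mathbb{R}^2;\mathbb{R}^2) \times L^4(\mathbb{R}^2)$, endowed with the norm $\|(v,w)\|_X := \|v\|_2 + \|w\|_4$. Both factors are reflexive (being $L^p$ spaces with $p \in (1,\infty)$), and a finite product of reflexive Banach spaces is again a reflexive Banach space; so $X$ is reflexive.

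Next I would define the linear map
\begin{equation*}
T: (C_0^\infty(\mathbb{R}^2), \|\cdot\|_{2,4}) \longrightarrow X, \qquad T(u) := (\nabla u,\, u).
\end{equation*}
By construction $\|T(u)\|_X = \|\nabla u\|_2 + \|u\|_4 = \|u\|_{2,4}$, so $T$ is an isometry. Because $\mathcal{H}^{2,4}(\mathbb{R}^2)$ is by definition the completion of $C_0^\infty(\mathbb{R}^2)$ with respect to $\|\cdot\|_{2,4}$, the map $T$ extends uniquely to a linear isometry $\widetilde T : \mathcal{H}^{2,4}(\mathbb{R}^2) \to X$. Since $\mathcal{H}^{2,4}(\mathbb{R}^2)$ is a Banach space (as a completion) and $\widetilde T$ is an isometry, the image $\widetilde T(\mathcal{H}^{2,4}(\mathbb{R}^2))$ is a complete, hence closed, subspace of $X$.

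To finish, I would invoke the classical result that a closed subspace of a reflexive Banach space is reflexive; applying it to $\widetilde T(\mathcal{H}^{2,4}(\mathbb{R}^2)) \subset X$ and transporting reflexivity along the isometric isomorphism $\widetilde T$ yields that $(\mathcal{H}^{2,4}(\mathbb{R}^2), \|\cdot\|_{2,4})$ is reflexive. The only point that might look delicate is whether the extension $\widetilde T$ is genuinely injective on the abstract completion, but this is automatic because the norm on $\mathcal{H}^{2,4}(\mathbb{R}^2)$ is exactly the pullback $\|\widetilde T(\cdot)\|_X$ by construction. I do not foresee any real obstacle in this argument; it is essentially a packaging of standard functional-analytic facts.
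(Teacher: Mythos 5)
Your argument is correct, but it takes a different route from the paper's. The paper splits the statement in two: for reflexivity it observes that $\|\cdot\|_{2,4}$ is equivalent to the norm $\|u\|_*=\sqrt{\|\nabla u\|_2^2+\|u\|_4^2}$, under which the space is uniformly convex (an $\ell^2$-combination of the uniformly convex norms of $L^2$ and $L^4$), and then invokes Milman--Pettis; for completeness it argues by hand, taking a Cauchy sequence, extracting limits $u\in L^4(\mathbb{R}^2)$ and $\mathbf{U}\in L^2(\mathbb{R}^2)$, and verifying via test functions that $\nabla u=\mathbf{U}$ in the sense of distributions. Your proof instead realizes the space isometrically as a closed subspace of the reflexive product $L^2(\mathbb{R}^2;\mathbb{R}^2)\times L^4(\mathbb{R}^2)$ and uses that closed subspaces of reflexive spaces are reflexive --- the same standard device used to prove reflexivity of $W^{1,p}$; this is cleaner, avoids uniform convexity and Milman--Pettis altogether, and correctly sidesteps the fact that your sum norm on the product need not be uniformly convex, since reflexivity is invariant under equivalent norms. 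You are also right that completeness is automatic for an abstract completion, so your ``image is complete, hence closed'' step is legitimate. What the paper's hands-on completeness proof buys, and your abstract argument does not, is a concrete realization of the elements of the completion: the distributional identification $\nabla u=\mathbf{U}$ shows that limits of Cauchy sequences are genuine functions in $L^4$ with weak gradients in $L^2$, which is exactly the groundwork exploited immediately afterwards in Proposition \ref{pr:density} to identify $\ch$ with $\mathcal{W}^{2,4}(\RD)=\{u\in L^4(\RD):\nabla u\in L^2(\RD)\}$. For the proposition as stated, your proof is complete and arguably tidier; in the economy of the paper, the concrete step would still have to be done somewhere.
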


\begin{proof}
To prove that the normed space is reflexive it is sufficient to observe that $\|\cdot\|_{2,4}$ is equivalent to $\|\cdot\|_*=\sqrt{\|\n\cdot\|_2^2+\|\cdot\|_4^2}$ and $(\ch,\|\cdot\|_*)$ is an uniformly convex normed space.\\
Now we prove it is complete.
Let $\{ u_n\}_n$ be a Cauchy sequence in $\ch$. Then $\{ u_n\}_n$ is a Cauchy sequence in $L^4(\RD)$ and $\{ \n u_n\}_n$ is a Cauchy sequence in $L^2(\RD)$. Since $L^4(\RD)$ is complete, there exists $u \in L^4(\RD)$ such that $\lim_n u_n = u$ in $L^4(\RD)$. Since $L^2(\RD)$ is complete, then there exists ${\bf U} \in L^2(\RD)$ such that $\lim_n \n u_n = {\bf U}$ in $L^2(\RD)$.
We want to prove that $\n u = {\bf U}$ in the distributions sense, i.e. that for every $\varphi \in C_0^\infty (\RD)$
\[
\int_\RD u \n \varphi \, dx = - \int_\RD \varphi {\bf U}\, dx.
\]
Obviously, for every $\varphi \in C_0^\infty (\RD)$ and for every $n\in\N$
\[
\int_\RD u_n \n \varphi \, dx= - \int_\RD \varphi \n u_n\, dx.
\]
So it is sufficient to prove that
\[
\lim_n \int_\RD u_n \n \varphi \, dx= \int_\RD u \n \varphi\, dx
\quad\hbox{ and }\quad
\lim_n \int_\RD \varphi \n u_n\, dx=\int_\RD \varphi {\bf U}\, dx.
\]
Indeed, since $\lim_n u_n = u$ in $L^4(\RD)$, then
\[
\left|\int_\RD  ( u_n - u)\n\varphi\, dx\right| \leq \|\n\varphi\|_{\frac 43} \|u_n - u\|_4 \to 0,
\]
while, since $\lim_n \n u_n = {\bf U}$ in $L^2(\RD)$ then 
\begin{equation*}
\left|\int_\RN \varphi (\n u_n - {\bf U})\, dx\right| 
 \le  \|\varphi\|_{2} \|\n u_n-{\bf U}\|_{2} \to 0.
\end{equation*}
\end{proof}

	\begin{proposition}\label{pr:density}
		The space $\ch$ corresponds to the set 
		$${\mathcal W}^{2,4}(\RD):=\{u\in L^4(\RD): \n u \in L^2(\RD)\}.$$
		Moreover, if we define 
			$${\mathcal W}_r^{2,4}(\RD)=\{u\in {\mathcal W}^{2,4}(\RD): u \hbox{ is radially symmetric}\},$$
		then $\chr={\mathcal W}_r^{2,4}(\RD)$.
	\end{proposition}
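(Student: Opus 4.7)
My plan is to establish both inclusions $\ch\subseteq\mathcal{W}^{2,4}(\RD)$ and $\mathcal{W}^{2,4}(\RD)\subseteq\ch$ (and analogously in the radial setting).

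The first inclusion is essentially contained in the proof of Proposition \ref{pr:ref}: given $u\in\ch$, any representative Cauchy sequence $\{u_n\}\subset C_0^\infty(\RD)$ converges in $L^4(\RD)$ to $u$, while $\{\n u_n\}$ converges in $L^2(\RD)$ to some ${\bf U}$ that, by the very distributional identification carried out there, coincides with $\n u$; hence $u\in L^4(\RD)$ with $\n u\in L^2(\RD)$.

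For the reverse inclusion I would argue that $\mathcal{W}^{2,4}(\RD)$ is itself complete with respect to $\|\cdot\|_{2,4}$ (the proof of Proposition \ref{pr:ref} extends verbatim), and prove that $C_0^\infty(\RD)$ is dense in $\mathcal{W}^{2,4}(\RD)$; together these identify $\mathcal{W}^{2,4}(\RD)$ with the completion of $C_0^\infty(\RD)$, namely with $\ch$. Density is obtained by the standard two-step truncation–mollification scheme. Given $u\in\mathcal{W}^{2,4}(\RD)$, pick a radial cut-off $\eta_R\in C_0^\infty(\RD)$ with $\eta_R\equiv 1$ on $B_R$, $\supp\eta_R\subseteq B_{2R}$, and $|\n\eta_R|\le C/R$. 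Dominated convergence yields $u\eta_R\to u$ in $L^4(\RD)$ and $(\eta_R-1)\n u\to 0$ in $L^2(\RD)$, while the remaining term is controlled via H\"older's inequality on $A_R:=B_{2R}\setminus B_R$:
$$\|u\,\n\eta_R\|_2\le \frac{C}{R}\|u\|_{L^2(A_R)}\le \frac{C}{R}|A_R|^{1/4}\|u\|_4\le \frac{C'}{\sqrt{R}}\|u\|_4\longrightarrow 0.$$
Once $u$ has been replaced by a compactly supported function in $\mathcal{W}^{2,4}(\RD)$, convolution with a radial Friedrichs mollifier $\rho_\e$ produces $C_0^\infty(\RD)$ approximants converging in $L^4(\RD)$ and (for the gradient) in $L^2(\RD)$ by standard properties of mollification.

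The main obstacle here is precisely the cut-off contribution $\|u\,\n\eta_R\|_2$: the usual bound $\le(C/R)\|u\|_2$ is unavailable, because in our zero-mass framework $u$ is not a priori in $L^2(\RD)$. The remedy is the two-dimensional H\"older trick above, where the growing factor $|A_R|^{1/4}\sim R^{1/2}$ is more than compensated by the $1/R$ decay of $|\n\eta_R|$, leaving a net $1/\sqrt{R}$ gain. The radial statement $\chr=\mathcal{W}_r^{2,4}(\RD)$ follows from the same argument by choosing the cut-off $\eta_R$ and the mollifier $\rho_\e$ to be radial, so that the radial symmetry of $u$ is preserved through the whole approximation.
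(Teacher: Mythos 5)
Your proposal is correct and takes essentially the same route as the paper: the paper likewise reduces everything to approximating $u\in\mathcal{W}^{2,4}(\RD)$ by compactly supported functions via a cut-off $k_M$, and controls the gradient error on the annulus $A_M=\{M\le|x|\le 2M\}$ by exactly your H\"older trick, $\frac{C}{M^2}\int_{A_M}u^2\,dx\le \frac{C}{M^2}\|u\|_4^2|A_M|^{1/2}\le\frac{C}{M}\|u\|_4^2$, which is the squared form of your bound $\|u\,\n\eta_R\|_2\le C'R^{-1/2}\|u\|_4$. The remaining ingredients you spell out (mollification, completeness of $\mathcal{W}^{2,4}(\RD)$, identification of the completion, the radial variant) are the ``standard arguments'' the paper delegates to \cite{LL}, so your write-up is simply a more detailed version of the same proof.
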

	\begin{proof}
			We have just to show that the functions in ${\mathcal W}^{2,4}(\RD)$ can be approximate in the norm $\|\cdot\|_{2,4}$ by functions in the same space, with compact support. The rest of the proof proceeds following standard arguments (see \cite[Theorem 7.6]{LL}). 
			\\
			Indeed, consider $u\in{\mathcal W}^{2,4}(\RD)$ and let $k:\RN\to[0,1]$ be a cut off smooth function such that $k\equiv 1$ in $|x|\le 1$ and $k\equiv 0$ in $|x|\ge 2.$
			For any $M>0$, define $v_M= k_M u $, where $k_M(x)=k(x/M),$ and set $A_M=\{x\in \RD: M\le |x|\le 2M\}$.
			Certainly $v_M$ has a compact support and it is in $L^4(\RD)$.\\ 
			Moreover, since $\n v_M= k_M\n u+u\n k_M$, of course $\n v_M \in L^2(\RD).$
			We easily have that
			\begin{equation*}
			\|u-v_M\|^4_4\le\int_{B_M^c}|u|^4\,dx =o_M(1),
			\end{equation*}
			where $o_M(1)$ denotes a vanishing function as $M\to+\infty.$
			\\
			Moreover
			\begin{align*}
			\|\n u-\n v_M\|^2_2&\le C \int_{|x|\ge M}|\n u|^2\,dx+ \frac C{M^2}\int_{A_M} u^2\, dx\\
										&\le o_M(1)+ \frac C{M^2}\|u\|_4^2|A_M|^{\frac 12}\\
										&\le o_M(1)+ \frac C{M}\|u\|_4^2,
			\end{align*}
			and then we conclude.
	\end{proof}

In the following proposition we study the embedding's properties of $\ch$.
\begin{proposition}\label{pr:embedding}
The space $\ch$ is continuously embedded into $L^q(\RD)$, for any $q\in [4,+\infty)$.
\end{proposition}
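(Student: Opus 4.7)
The plan is to reduce the statement to a Gagliardo--Nirenberg interpolation inequality, combined with the density-based definition of $\ch$.

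First, the case $q=4$ is immediate from the very definition of the norm, since $\|u\|_4\le\|u\|_{2,4}$ for every $u\in\ch$. So I can focus on $q\in(4,+\infty)$. For test functions $\vfi\in C_0^\infty(\RD)$, the two-dimensional Gagliardo--Nirenberg inequality yields
\begin{equation*}
\|\vfi\|_q\le C\,\|\n\vfi\|_2^{\theta}\,\|\vfi\|_4^{1-\theta},
\end{equation*}
with $\theta=1-4/q\in[0,1)$: the exponent is determined by scaling invariance, since on $\RD$ the seminorm $\|\n\cdot\|_2$ is scale-invariant while $\|\cdot\|_q$ and $\|\cdot\|_4$ scale like $\lambda^{-2/q}$ and $\lambda^{-1/2}$ respectively. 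Because $\theta+(1-\theta)=1$, Young's inequality (or just the elementary bound $a^\theta b^{1-\theta}\le a+b$ for $a,b\ge 0$) gives
\begin{equation*}
\|\vfi\|_q\le C\bigl(\|\n\vfi\|_2+\|\vfi\|_4\bigr)=C\|\vfi\|_{2,4}.
\end{equation*}

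Second, I would extend this inequality from $C_0^\infty(\RD)$ to all of $\ch$ by the standard density argument. Since $\ch$ is by definition the completion of $C_0^\infty(\RD)$ with respect to $\|\cdot\|_{2,4}$, any $u\in\ch$ is the $\|\cdot\|_{2,4}$-limit of some sequence $\{\vfi_n\}\subset C_0^\infty(\RD)$. Applying the inequality to the differences $\vfi_n-\vfi_m$ shows that $\{\vfi_n\}$ is Cauchy in $L^q(\RD)$, so it converges in $L^q$ to some $v$; since $\vfi_n\to u$ already in $L^4$, necessarily $v=u$ a.e., whence $u\in L^q(\RD)$ and
\begin{equation*}
\|u\|_q\le C\|u\|_{2,4}.
\end{equation*}
This is precisely the continuous embedding claimed.

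The proof is therefore mostly routine once the right scaling is identified; the only point that requires a little care is the density step, because one must make sure to apply Gagliardo--Nirenberg to \emph{differences} of approximating test functions in order to upgrade a uniform bound to actual $L^q$-convergence and then identify the limit. No sharp constant is needed.
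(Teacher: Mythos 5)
Your proof is correct, and it follows the same two-step template as the paper --- prove the inequality $\|\varphi\|_q\le C\|\varphi\|_{2,4}$ on $C_0^\infty(\R^2)$, then extend by density --- but it obtains the key inequality by a different route. The paper does not cite Gagliardo--Nirenberg as a black box: it derives what it needs from scratch, starting from the elementary $L^1$-based inequality $\|u\|_2\le\|\partial_{x_1}u\|_1^{1/2}\|\partial_{x_2}u\|_1^{1/2}$ (Brezis), applying it to $|u|^{m-1}u$ to get $\|u\|_{2m}\le C(\|\nabla u\|_2+\|u\|_{2(m-1)})$, bootstrapping from $m=3$ through $m=3+j$, and filling in non-even exponents by interpolation. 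Your approach instead quotes the two-dimensional Gagliardo--Nirenberg inequality $\|\varphi\|_q\le C\|\nabla\varphi\|_2^{\theta}\|\varphi\|_4^{1-\theta}$ with $\theta=1-4/q$, with the exponent pinned down by scaling. This is legitimate, with one point worth making explicit: $r=2$ equals the dimension here, which is precisely the exceptional (borderline) case of Nirenberg's theorem in which the inequality holds only for $\theta\in[0,1)$ --- your $\theta=1-4/q<1$ for finite $q$ stays safely inside this range, but the citation would fail at the endpoint $\theta=1$ (consistent with $W^{1,2}(\R^2)\not\hookrightarrow L^\infty$), so the caveat deserves a word. What each approach buys: the paper's iteration is self-contained and elementary (and in fact re-proves exactly the GN instances you invoke, e.g.\ $\|u\|_6\le C\|\nabla u\|_2^{1/3}\|u\|_4^{2/3}$ is its $m=3$ step); yours is shorter at the cost of an external theorem. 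Your density step --- applying the inequality to differences $\varphi_n-\varphi_m$ to get a Cauchy sequence in $L^q$ and identifying the limit through the $L^4$ convergence --- is spelled out more carefully than the paper's one-line ``by a density argument,'' and is exactly the right way to do it.
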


\begin{proof}
Going back the proof of the Sobolev inequality, if $u\in C_0^{\infty}(\RD)$, one has
\begin{equation} \label{eq:a.1}
\| u\|_{2} \le 
\left\| \frac{\partial u}{\partial x_1} \right\|_1^{\frac{1}{2}}
\left\| \frac{\partial u}{\partial x_2} \right\|_1^{\frac{1}{2}}.
\end{equation}
See \cite[(19), P.\,280]{B}.
Let $m \ge 2$. Applying \eqref{eq:a.1} to $|u|^{m-1}u$, we get
$$
\| u\|_{2m}^m
\le C 
\left\| |u|^{m-1} \frac{\partial u}{\partial x_1} \right\|_1^{\frac{1}{2}}
\left\| |u|^{m-1} \frac{\partial u}{\partial x_2} \right\|_1^{\frac{1}{2}}
\le C \| \nabla u\|_2 \| u\|_{2(m-1)}^{m-1}.$$
By the Young inequality, it follows that
\begin{equation} \label{eq:a.2}
\| u\|_{2m} \le C(  \| \nabla u\|_2+\| u\|_{2(m-1)} ).
\end{equation}
In \eqref{eq:a.2}, we first choose $2(m-1)=4$, that is, 
$m= 3$. 
Thus from \eqref{eq:a.2}, we obtain
$$
\| u\|_{6}
\le C(  \| \nabla u\|_2+\| u\|_{4} )
=  C \| u\|_{2,4}.
$$
Iterating this procedure with $m=3+j$ for $j \in \mathbb{N}$,
and applying the interpolation inequality, one gets
$$
\| u\|_q \le C \| u\|_{2,4}
\quad \hbox{for all} \ u\in C_0^{\infty}(\RD) 
\ \hbox{and} \ q\in[4,+\infty).$$
This completes the proof by a density argument.

\end{proof}

\begin{remark}\label{re:loc}
It is easy to see that $\mathcal{H}^{2,4}_{\rm loc}(\RD)=H^{1,2}_{\rm loc}(\RD)$ and so $\mathcal{H}^{2,4}_{\rm loc}(\RD)$ is compactly embedded 
into $L^q_{\rm loc}(\RD)$, for any $q\in [1,+\infty)$.
\end{remark}

We now introduce a new Strauss Radial Lemma (see \cite{strauss}) in $\chr$.

	\begin{proposition}\label{pr:radial}
		For any $\tau\in\left(0,\frac 14\right)$, there exists $C_\tau>0$ and $R_\tau>0$ such that, for all  $u\in \chr$, we have 
			\begin{equation*}
|u(x)|\le C_\tau\frac{\|u\|_{2,4}}{|x|^\tau},\,\quad\hbox{  for } |x|\ge R_\tau.
			\end{equation*}  
	\end{proposition}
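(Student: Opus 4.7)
The plan is to mimic Strauss's classical radial decay lemma, adapted to the $L^{4}$-based space $\chr$: since a function $u\in\chr$ need not lie in $L^{2}(\RD)$, I would work with the power $u^{4}$ in place of the standard $u^{2}$. Since by its very definition $\chr$ is the completion of radial $C_0^\infty(\RD)$ functions in the $\|\cdot\|_{2,4}$-norm, it suffices to establish the bound first on $C_0^\infty(\RD)$ radial $u$ and then extend by density.

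For such $u$, writing $u(r)=u(|x|)$ (with compact support), the starting identity is
\[
r\,u(r)^{4}=-\int_{r}^{+\infty}\frac{d}{ds}\bigl(s\,u(s)^{4}\bigr)\,ds
=-\int_{r}^{+\infty}u(s)^{4}\,ds-4\int_{r}^{+\infty}s\,u(s)^{3}u'(s)\,ds.
\]
The first integral satisfies $\int_r^{+\infty}u^{4}\,ds=\int_r^{+\infty}s^{-1}(s\,u^{4})\,ds\le r^{-1}\int_0^{+\infty}s\,u^{4}\,ds=\tfrac{1}{2\pi r}\|u\|_{4}^{4}$. For the second, splitting $s\,u^{3}u'=(s^{1/2}u^{3})(s^{1/2}u')$ and applying Cauchy--Schwarz yields
\[
\left|\int_{r}^{+\infty}s\,u^{3}u'\,ds\right|
\le\left(\int_{r}^{+\infty}s\,u^{6}\,ds\right)^{1/2}\!\left(\int_{r}^{+\infty}s\,(u')^{2}\,ds\right)^{1/2}
\le\frac{1}{2\pi}\,\|u\|_{6}^{3}\,\|\nabla u\|_{2}.
\]
The embedding $\ch\hookrightarrow L^{6}(\RD)$ from Proposition \ref{pr:embedding} then gives $\|u\|_{6}\le C\|u\|_{2,4}$, and combined with $\|u\|_{4},\|\nabla u\|_{2}\le\|u\|_{2,4}$ I obtain
\[
u(r)^{4}\le\frac{C}{r^{2}}\|u\|_{2,4}^{4}+\frac{C}{r}\|u\|_{2,4}^{4}\le\frac{C}{r}\|u\|_{2,4}^{4}\qquad\text{for all }r\ge 1.
\]
Taking fourth roots gives $|u(r)|\le C\|u\|_{2,4}\,r^{-1/4}$; since $r^{-1/4}\le r^{-\tau}$ on $r\ge 1$ whenever $\tau\in(0,1/4)$, the claim follows with $R_{\tau}=1$ and a constant $C_{\tau}$ that can in fact be chosen independently of $\tau$.

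The final step is to pass from $C_0^\infty(\RD)$ radial functions to general $u\in\chr$. Given a sequence $u_{n}\to u$ in $\chr$ with each $u_{n}\in C_0^\infty(\RD)$ radial, Proposition \ref{pr:embedding} yields $u_{n}\to u$ in every $L^{q}(\RD)$ with $q\ge 4$, while the $L^{2}$-convergence of the gradients gives, on any annulus $\{a<|x|<b\}$ with $a>0$, convergence in $H^{1}$ of the radial profiles and hence uniform convergence to a continuous representative of $u$ on compact subsets of $(0,+\infty)$. Passing to the limit in the pointwise estimate for each $u_{n}$ yields the claim for $u$. I expect the density step to be the only delicate point, since $\chr$-functions are only defined up to null sets; but the uniform dependence of the bound on $\|u_{n}\|_{2,4}\to\|u\|_{2,4}$, together with the continuity of the radial representatives away from the origin, makes the passage to the limit routine.
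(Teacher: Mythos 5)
Your proof is correct, and it takes a genuinely different route from the paper's, within the same Strauss-type family. The paper runs the argument on the quantity $r^k u^2(r)$ with a fractional weight $k\in\left(0,\frac12\right)$: after integrating $-\frac{d}{ds}\left(s^k u^2(s)\right)$ over $[r,+\infty)$, both error terms are controlled via Cauchy--Schwarz against $\left(\int_r^{+\infty} s\,u^4\,ds\right)^{1/2}\le C\|u\|_4^2$, with the explicit power integrals $\int_r^{+\infty}s^{2k-3}\,ds$ and $\int_r^{+\infty}s^{4k-3}\,ds$ forcing $k<\frac12$, i.e.\ $\tau=k/2<\frac14$; that proof is self-contained, using only $\|u\|_4$ and $\|\nabla u\|_2$. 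You instead differentiate $r\,u^4(r)$, estimate the cross term by $\frac{1}{2\pi}\|u\|_6^3\|\nabla u\|_2$, and import the embedding $\ch\hookrightarrow L^6(\RD)$ from Proposition \ref{pr:embedding}. What this buys is the endpoint: you obtain the genuine rate $|u(r)|\le C\|u\|_{2,4}\,r^{-1/4}$ with $R_\tau=1$ and a constant independent of $\tau$, whereas in the paper's argument $C_\tau$ degenerates as $\tau\to\frac14$ (the factor $\left(\int_r^{+\infty}s^{4k-3}\,ds\right)^{1/2}$ blows up as $k\to\frac12$), so the stated proposition only reaches $\tau<\frac14$ strictly. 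The price is the dependence on Proposition \ref{pr:embedding}, which is harmless since it precedes the statement in the paper. Your density step is also sound, and in fact more detailed than the paper's one-line appeal to density; note that it could be shortened, since $L^4$-convergence gives an a.e.\ convergent subsequence, which transfers the uniform-in-$n$ pointwise bound directly to the continuous representative of $u$ away from the origin.
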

	\begin{proof}
		Let $k\in\left(0,\frac 12\right)$ and consider $u$ a radial function in $C_0^\infty(\RD)$.
		For any $r\ge 0$, we have that 
			\begin{align*}
				\left|\frac{d}{dr}\left(r^ku^2(r)\right)\right|&\le k r^{k-1}u^2(r)+2r^k|u(r)||u'(r)|\\
			 &\le k r^{k-1}u^2(r) + r^{2k-1}u^2(r) + r|u'(r)|^2. 
			\end{align*}
		Now, fix $r\ge 1$ and integrate $-\frac{d}{ds}\left(s^ku^2(s)\right)$ in the interval $[r,+\infty)$. We have
			\begin{align*}
				r^ku^2(r)&\le k\int_r^{+\infty} s^{k-\frac 32}s^{\frac 12}u^2(s)\, ds+\int_r^{+\infty}s^{2k-\frac 32}s^{\frac 12}u^2(s)\, ds+ \frac{\|\n u\|_2^2} {2\pi}\\
				&\le \frac k{\sqrt{2\pi}} \left(\int_r^{+\infty} s^{2k-3}\,ds\right)^{\frac12}\|u\|_4^2+\frac 1{\sqrt{2\pi}}\left(\int_r^{+\infty} s^{4k-3}\,ds\right)^{\frac 12}\|u\|_4^2+\frac{\|\n u\|_2^2} {2\pi}\\
				&\le C (r^{k-1}+r^{2k-1})\|u\|_4^2+\frac{\|\n u\|_2^2} {2\pi}
\le C\|u\|_{2,4}^2.
			\end{align*}
The conclusion follows easily by density arguments.
	\end{proof}

The following compact embedding result holds.

	\begin{proposition}\label{prcomp}
		The space $\chr$ is compactly embedded into $L^q(\RD)$, for any $q\in (4,+\infty)$.
	\end{proposition}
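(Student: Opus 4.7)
The plan is to follow the classical Strauss-type strategy for compact embeddings of radial spaces, combining reflexivity with local compactness inside balls and a tail estimate at infinity coming from the pointwise radial decay in Proposition \ref{pr:radial}.

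First I would take a bounded sequence $\{u_n\}_n \subset \chr$. By Proposition \ref{pr:ref}, $\chr$ is reflexive, so, up to a subsequence, there exists $u \in \chr$ with $u_n \rightharpoonup u$. By Remark \ref{re:loc}, $\mathcal{H}^{2,4}_{\rm loc}(\RD)=H^{1,2}_{\rm loc}(\RD)$ embeds compactly into $L^q_{\rm loc}(\RD)$ for every $q\in[1,+\infty)$, hence for every fixed $R>0$ we obtain
\[
u_n \to u \quad \text{in } L^q(B_R).
\]

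The key step is the uniform tail estimate. Fix $q\in(4,+\infty)$ and pick any $\tau\in(0,1/4)$. By Proposition \ref{pr:radial}, for every $u_n$ (and for $u$) one has
\[
|u_n(x)| \le C_\tau \frac{\|u_n\|_{2,4}}{|x|^\tau} \qquad \text{for } |x|\ge R_\tau.
\]
Rather than integrating $|x|^{-\tau q}$ directly (which would force $q>8$), I would interpolate with the uniform $L^4$ control: for $R\ge R_\tau$,
\[
\int_{|x|\ge R}|u_n|^q\,dx
\le \Bigl(\sup_{|x|\ge R}|u_n(x)|\Bigr)^{q-4}\,\|u_n\|_4^4
\le \frac{C}{R^{\tau(q-4)}}\,\|u_n\|_{2,4}^{q-4}\,\|u_n\|_4^4.
\]
Since $q>4$, $\tau(q-4)>0$, the right-hand side is controlled by $C/R^{\tau(q-4)}$ uniformly in $n$, and tends to $0$ as $R\to+\infty$. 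The same estimate applies to $u$.

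To conclude, split
\[
\|u_n-u\|_q^q \le \|u_n-u\|_{L^q(B_R)}^q + 2^{q-1}\int_{|x|\ge R}(|u_n|^q+|u|^q)\,dx;
\]
for any $\varepsilon>0$, choose $R$ large enough so that the tail contribution is below $\varepsilon/2$ uniformly in $n$, and then invoke the local strong convergence to make the first summand smaller than $\varepsilon/2$ for $n$ large. The main obstacle, which I expect to be the only non-routine point, is the tail control: the direct integration of the radial-decay estimate is not sufficient in the regime $q\in(4,8]$, and one genuinely needs the interpolation against the $L^4$ norm, which is available precisely because $L^4$ is built into the very definition of the norm of $\chr$.
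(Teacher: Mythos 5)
Your proposal is correct and follows essentially the same route as the paper, which simply invokes Propositions \ref{pr:embedding} and \ref{pr:radial} together with the argument of Strauss's Compactness Lemma 2: weak convergence plus local compactness, with the tail controlled by interpolating the radial decay against the uniform $L^4$ bound built into the norm of $\chr$. Your explicit observation that direct integration of the decay estimate would only cover $q>8$, so that the interpolation with $\|u_n\|_4^4$ is genuinely needed for $q\in(4,8]$, is precisely the (unstated) point of the paper's reference to Strauss.
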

	\begin{proof}
		Taking into account Proposition \ref{pr:embedding} and Proposition \ref{pr:radial} the proof follows the same arguments as in \cite[Compactness Lemma 2]{strauss}.
	\end{proof}

\section{Existence of a static solution}\label{se:3}

First, we will study the following {\em perturbed} equation adding a positive  small mass term to \eqref{eq}. More precisely, for any $\e>0$ we consider
\begin{equation}\label{eq-eps}\tag{$\mathcal{P}_\e$}
\begin{cases}
\dis 
-\Delta u +\e u
+ \left(\frac{h_{u}^{2}(|x|)}{|x|^2} 
+ \int_{|x|}^{+\infty} \frac{h_u(s)}{s}  u^2(s) \,ds  \right)u =
|u|^{p-1}u \qquad \hbox{in } \R^2, 
\\[5mm]
u(x)\to 0, \qquad \hbox{as }|x| \to +\infty.
\end{cases}
\end{equation}
Solutions of \eqref{eq-eps} can be found as critical points of the functional 
\begin{equation*}
I_\e(u)=\frac 12 \|\n u\|_2^2+\frac \e 2\|u\|_2^2
+\frac 12 \ird  \frac{h^2_u u^2}{|x|^2} dx
-\frac{1}{p+1} \|u\|_{p+1}^{p+1},
\end{equation*}
which is well defined in classical Sobolev space 
$$\Hr:=\{u\in \H:u \hbox{ is radially symmetric} \}.$$

Following \cite{BHS}, we define a Pohozaev-Nehari type manifold
\begin{equation*}
\M_\e:=\{u\in \Hr\setminus\{0\} : J_\e (u)=0 \},
\end{equation*}
where 
\begin{equation*}
J_\e (u)=\a \|\n u\|_2^2+\e (\a-1)\|u\|_2^2
+(3\a -2) \ird  \frac{h^2_u u^2}{|x|^2} dx
-\frac{(p+1)\a -2}{p+1} \|u\|_{p+1}^{p+1},
\end{equation*}
and we have fixed $\a >1$ and such that $\frac{2}{p-1}<\a<\frac{2}{5-p}$, for $p\in (3,5)$ and $\a >1$ arbitrary, for $p\ge 5$.

We have the following
\begin{proposition}[\!\cite{BHS}]\label{pipi}
For any $\e>0$,  there exists
$u_\e\in \Hr$ which is a positive solution of \eqref{eq-eps} and such that
\[
I_\e(u_\e)=\inf_{u\in \M_\e}I_\e(u)=:m_\e>0.
\]
\end{proposition}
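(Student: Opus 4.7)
My plan is to apply the Pohozaev--Nehari manifold technique. First I would verify that $J_\e$ equals the combination $\alpha N_\e - P_\e$ of the Nehari and Pohozaev identities for \eqref{eq-eps}. The Nehari identity $\|\nabla u\|_2^2+\e\|u\|_2^2+3\ird\tfrac{h_u^2 u^2}{|x|^2}dx = \|u\|_{p+1}^{p+1}$ follows by testing \eqref{eq-eps} with $u$ and exploiting the symmetry identity $\ird\bigl(\int_{|x|}^{+\infty}\tfrac{h_u(s)}{s}u^2\,ds\bigr)u^2\,dx = 2\ird\tfrac{h_u^2 u^2}{|x|^2}dx$ (Fubini, using $(h_u^2)'(r)=r h_u(r)u^2(r)$); the Pohozaev identity $\e\|u\|_2^2+2\ird\tfrac{h_u^2 u^2}{|x|^2}dx = \tfrac{2}{p+1}\|u\|_{p+1}^{p+1}$ is read off from $\tfrac{d}{d\lambda}I_\e(u(\lambda\,\cdot))\big|_{\lambda=1}=0$. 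Thus every nontrivial critical point of $I_\e$ automatically lies on $\M_\e$.

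I then introduce the fibering $u_t(x):=t\,u(t^{1/\alpha}x)$ and compute $J_\e(u_t)=\alpha\,t\,\tfrac{d}{dt}I_\e(u_t)$. The four terms of $I_\e(u_t)$ have scaling exponents $2,\ 2-2/\alpha,\ 6-4/\alpha,\ p+1-2/\alpha$; the hypotheses on $\alpha$ are tailored to make $p+1-2/\alpha$ strictly the largest exponent while keeping the other three positive. Consequently, for every $u\in\Hr\setminus\{0\}$, the function $g_u(t):=I_\e(u_t)$ vanishes at $t=0^+$, tends to $-\infty$ as $t\to+\infty$, and attains a positive maximum at a unique $t_u > 0$, producing $u_{t_u}\in\M_\e$ and $\M_\e\neq\emptyset$. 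Furthermore, under the same bounds on $\alpha$, the identity $I_\e(u)-\tfrac{1}{(p+1)\alpha-2}J_\e(u)=a_1\|\nabla u\|_2^2+a_2\,\e\|u\|_2^2+a_3\ird\tfrac{h_u^2 u^2}{|x|^2}dx$ holds on $\Hr$ with strictly positive coefficients $a_1,a_2,a_3$. On $\M_\e$ this gives $H^1$-coercivity of $I_\e$; combined with $\|u\|_{p+1}^{p+1}\ge c\|u\|_{H^1}^2$ (extracted from $J_\e(u)=0$) and the Sobolev inequality $\|u\|_{p+1}^{p+1}\le C\|u\|_{H^1}^{p+1}$, one obtains $\|u\|_{H^1}\ge c' > 0$ uniformly on $\M_\e$, hence $m_\e>0$.

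For a minimizing sequence $\{u_n\}\subset\M_\e$, coercivity gives $\Hr$-boundedness; up to a subsequence, $u_n\rightharpoonup v$ weakly in $\Hr$, strongly in $L^q(\RD)$ for every $q\in(2,+\infty)$ by the Strauss radial compact embedding, and a.e. The uniform $L^{p+1}$ lower bound on $\M_\e$ excludes $v\equiv 0$. For each fixed $t>0$ the rescaling $(u_n)_t$ converges weakly to $v_t$ in $\Hr$ and strongly in $L^q$ for $q\in(2,+\infty)$, so Fatou applied to the nonlocal term together with the strong $L^{p+1}$-limit give $I_\e(v_t)\le\liminf_n I_\e((u_n)_t)\le\liminf_n \max_s I_\e((u_n)_s)=\liminf_n I_\e(u_n)=m_\e$, where the middle inequality uses that $s=1$ realises the maximum along the fiber $s\mapsto (u_n)_s$ because $u_n\in\M_\e$. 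Setting $u_\e:=v_{t_v}$, the fiber projection belongs to $\M_\e$ and satisfies $m_\e\le I_\e(u_\e)\le m_\e$, hence $I_\e(u_\e)=m_\e$. Lagrange multipliers produce $\mu\in\R$ with $I_\e'(u_\e)=\mu J_\e'(u_\e)$; testing both sides against the scaling direction $D=u_\e+\tfrac{1}{\alpha}\,x\cdot\nabla u_\e$ yields $0 = g_{u_\e}'(1)=\alpha\,\mu\,g_{u_\e}''(1)$, and $g_{u_\e}''(1)<0$ because $t\mapsto t^{2/\alpha-p}g_{u_\e}'(t)$ is strictly decreasing (all three positive contributions become decreasing powers of $t$ while the leading negative contribution becomes constant, precisely thanks to the exponent comparison). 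Hence $\mu=0$ and $u_\e$ is a weak solution of \eqref{eq-eps}. Positivity follows upon replacing $u_\e$ by $|u_\e|$ (which leaves $I_\e$ invariant since $h_{|u|}=h_u$) and invoking the strong maximum principle. The main obstacle I anticipate is the passage to the limit: one must combine weak/strong convergence with the fibering to build a minimizer genuinely in $\M_\e$ rather than settling for the bare weak limit, and then establish the non-degeneracy $g_{u_\e}''(1)<0$ needed to kill the Lagrange multiplier.
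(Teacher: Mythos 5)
Your proposal is correct and takes essentially the same route as the proof the paper relies on: Proposition \ref{pipi} is quoted from \cite{BHS}, whose argument is precisely your constrained minimization of $I_\e$ over the Pohozaev--Nehari manifold $\M_\e$ via the fibering $t\mapsto t^{\a}u(t\,\cdot)$ (your $t\,u(t^{1/\a}\cdot)$ is the same family reparametrized), with the identical restrictions on $\a$, the uniqueness of the fiber maximum, the Strauss compactness for the minimizing sequence, and the elimination of the Lagrange multiplier through the fiber non-degeneracy $g_{u_\e}''(1)<0$. The only point worth noting is that pairing with the scaling direction $u+\tfrac1\a\,x\cdot\nabla u$ (which need not lie in $\Hr$) is made rigorous in \cite{BHS} by writing the Nehari and Pohozaev identities for the regularized Euler--Lagrange equation $I_\e'(u_\e)=\mu J_\e'(u_\e)$, which encodes exactly the computation you sketch.
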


Moreover these minimum's levels are uniformly bounded by positive constants both from above and from below. Indeed we have
\begin{proposition}\label{me<C}
There exists $C>0$ such that for any $\e\in (0,1)$ we have $C\le m_\e\le m_1$.
\end{proposition}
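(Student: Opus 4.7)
The plan is to prove the two inequalities separately. For the upper bound $m_\e\le m_1$, the idea is to start from the $\e=1$ minimizer $u_1\in\M_1$ furnished by Proposition~\ref{pipi} and to use the dilation $u_t(x):=t^{\alpha}u_1(tx)$. A direct computation gives $J_\e(u_t)=t\,\frac{d}{dt}I_\e(u_t)$, so critical points of the one-variable function $t\mapsto I_\e(u_t)$ correspond exactly to intersections of the scaling curve with $\M_\e$. Inspecting the powers of $t$ appearing in $I_\e(u_t)$, one checks that $I_\e(u_t)\to 0$ as $t\to 0^+$ (since $\alpha>1$ makes every exponent strictly positive) and $I_\e(u_t)\to-\infty$ as $t\to+\infty$ (the hypotheses $\alpha>\frac{2}{p-1}$ and, when $p\in(3,5)$, $\alpha<\frac{2}{5-p}$, guarantee that the highest power is $(p+1)\alpha-2$, appearing with a negative coefficient). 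Hence $I_\e(u_t)$ attains a maximum at some $t_\e>0$ and $u_{t_\e}\in\M_\e$. Combining this with the pointwise monotonicity $I_\e\le I_1$ (valid since $\e\le 1$ and only the nonnegative mass term differs) gives
\[
m_\e\le I_\e(u_{t_\e})\le I_1(u_{t_\e})\le\max_{t>0}I_1(u_t)=I_1(u_1)=m_1,
\]
where the last equality uses the standard fact, already implicit in \cite{BHS}, that on the Pohozaev--Nehari manifold a minimizer is the maximum of the energy along its own scaling orbit.

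For the lower bound $m_\e\ge C$ with $C$ independent of $\e\in(0,1)$, the first step is to use the identity $J_\e(u)=0$ to eliminate $\|u\|_{p+1}^{p+1}$ from the expression of $I_\e(u)$. This yields, on $\M_\e$,
\[
I_\e(u)=\frac{(p-1)\alpha-2}{2((p+1)\alpha-2)}\|\nabla u\|_2^2+\e\,\frac{(p-1)\alpha}{2((p+1)\alpha-2)}\|u\|_2^2+\frac{(p-5)\alpha+2}{2((p+1)\alpha-2)}\ird\frac{h_u^2 u^2}{|x|^2}\,dx.
\]
All three coefficients are strictly positive in the prescribed range of $\alpha$, so in particular $I_\e(u)\ge c_1\|\nabla u\|_2^2$ with $c_1>0$ independent of $\e$. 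It then suffices to bound $\|\nabla u\|_2$ away from zero uniformly. From $J_\e(u)=0$ one reads off $\|\nabla u\|_2^2\le C_1\|u\|_{p+1}^{p+1}$ and $\ird\frac{h_u^2 u^2}{|x|^2}\,dx\le C_2\|u\|_{p+1}^{p+1}$; the Gagliardo--Nirenberg interpolation $\|u\|_{p+1}^{p+1}\le C\|\nabla u\|_2^{p-3}\|u\|_4^4$ (valid in $\R^2$ for $p+1>4$) combined with the key inequality \eqref{eq:5.1} then gives
\[
\|u\|_{p+1}^{p+1}\le C'\|\nabla u\|_2^{p-2}\Bigl(\ird\frac{h_u^2 u^2}{|x|^2}\,dx\Bigr)^{1/2}\le C''\|\nabla u\|_2^{p-2}\bigl(\|u\|_{p+1}^{p+1}\bigr)^{1/2},
\]
which rearranges to $\|u\|_{p+1}^{p+1}\le C'''\|\nabla u\|_2^{2(p-2)}$. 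Chaining with $\|\nabla u\|_2^2\le C_1\|u\|_{p+1}^{p+1}$ leads to $\|\nabla u\|_2^{2p-6}\ge c>0$, and since $p>3$ this provides the desired uniform lower bound on $\|\nabla u\|_2$.

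The main obstacle I expect is precisely the uniformity of the lower bound in $\e$: because the eventual goal is to pass to the zero-mass limit $\e\searrow 0$, no estimate may rely on the $L^2$-mass, so the term $\e\|u\|_2^2$ cannot be used as leverage and we are effectively working in a genuinely zero-mass regime. The structural ingredients that survive in that limit are the positivity of the coefficients extracted from $J_\e(u)=0$ together with the nonlocal pointwise inequality \eqref{eq:5.1}, and it is exactly the combination of \eqref{eq:5.1} with Gagliardo--Nirenberg that re-ties $\|u\|_{p+1}^{p+1}$ to the gradient norm in a way exploitable uniformly in $\e$. The hypothesis $p>3$ enters as the exponent $2p-6$ being strictly positive, which is what makes the chain of inequalities close and delivers the uniform lower bound.
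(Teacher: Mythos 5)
Your proof is correct and takes essentially the same route as the paper: the upper bound via projecting the scaling orbit $t^\a u(t\cdot)$ of an $\M_1$ element onto $\M_\e$, and the lower bound via using $J_\e(u)=0$ to rewrite $I_\e$ on $\M_\e$ as a positive combination of $\|\n u\|_2^2$, $\e\|u\|_2^2$ and the nonlocal term, then closing a self-improving chain of inequalities through \eqref{eq:5.1} and the embedding into $L^{p+1}(\RD)$. The only cosmetic differences are that the paper compares $I_\e(u_{t_\e})$ with $I_1(u)$ by checking $t_\e<1$ and the positivity of the scaling exponents rather than your pointwise bound $I_\e\le I_1$ combined with the fiber-maximum property, and it phrases the lower bound as a contradiction along a sequence with $m_{\e_n}\to 0$ (using Proposition \ref{pr:embedding} where you invoke Gagliardo--Nirenberg explicitly) instead of your direct uniform estimate $\|\n u\|_2^{2p-6}\ge c>0$.
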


\begin{proof}
In the following,  for every $w\in \Hr$,  we set  
	\begin{equation*}
		a(w):=\|\n w\|_2^2,\quad b(w):=\|w\|_2^2, \quad c(w):= \ird \frac{h_w^2 w^2}{|x|^2}dx.
	\end{equation*}
	Consider $u\in \M_1$ and for any $t>0$ assume the following notation $u_t:=t^\a u(t\cdot)$, where $\a$ is choosen as in the definition of $J_\eps$. If we denote by $t_\eps>0$ the unique value for which $J_\eps(u_{t_\eps})=0$  (see \cite{BHS}), by simple computations we see that $t_\eps<1$ for $\eps\in(0,1)$. 
Now,
we have that
	\begin{align*}
		m_\eps&\le I_\eps(u_{t_\eps})\\
			&=\left(\frac 12 -\frac{\a}{(p+1)\a-2}\right)a(u_{t_\eps})\\
			&\qquad+\eps\left(\frac 12-\frac{\a-1}{(p+1)\a-2}\right)b(u_{t_\eps})+\left(\frac 12-\frac{3\a-2}{(p+1)\a-2}\right)c(u_{t_\eps})\\
			&=\left(\frac 12 -\frac{\a}{(p+1)\a-2}\right)t_\eps^{2\a}a(u)\\
			&\qquad+\eps\left(\frac 12-\frac{\a-1}{(p+1)\a-2}\right)t_\eps^{2(\a-1)}b(u)+\left(\frac 12-\frac{3\a-2}{(p+1)\a-2}\right)t_\eps^{6\a-4}c(u)\\
			&\le\left(\frac 12 -\frac{\a}{(p+1)\a-2}\right)a(u)\\
			&\qquad+\eps\left(\frac 12-\frac{\a-1}{(p+1)\a-2}\right)b(u)+\left(\frac 12-\frac{3\a-2}{(p+1)\a-2}\right)c(u)\\
			&=I_1(u).
	\end{align*}
Passing to the infimum, we have $m_\e\le m_1$.
\\
Now suppose by contradiction that, for a suitable $\eps_n\to 0$, it results that $m_{\eps_n}\to 0$.  For any $n\in \N$, let  $u_n \in \M_{\e_n}$ such that $I_{\e_n}(u_n)=m_{\e_n}$. Then we have that
	\begin{equation}\label{eq:cont}
		a(u_n)\to 0\qquad\hbox{ and }\qquad c(u_n)\to 0.
	\end{equation}
Since $u_n\in \M_{\e_n}$, by Proposition \ref{pr:embedding} we have that, for suitable positive constants $C_1$ and $C_2$,
	\begin{equation}\label{eq:fir}
		a(u_n)+ c(u_n)\le C_1\|u_n\|_{p+1}^{p+1}\le C_2 \|u_n\|_{2,4}^{p+1}.			
	\end{equation}
On the other hand, by \eqref{eq:5.1} and taking into account that $a(u_n)\to 0$, for a suitable constant $C>0$, we obtain
	\begin{align}\label{eq:sec}
		\|u_n\|_{2,4}& = (a(u_n))^{\frac{1}2} + \|u_n\|_4 \le (a(u_n))^{\frac{1}2} +  \big(a(u_n) +8 c(u_n)\big)^{\frac 14}\\
		&\le 2 \big(a(u_n) +8 c(u_n)\big)^{\frac 14}\le C \big(a(u_n) + c(u_n)\big)^{\frac 14}.\nonumber
	\end{align}
Inequalities \eqref{eq:fir} and \eqref{eq:sec} contradict \eqref{eq:cont}.
\end{proof}

As an immediate consequence of Proposition \ref{me<C}, we have
\begin{proposition}\label{pr:bdd}
The family $\{u_\e\}_{\e>0}$ is bounded in $\ch$.
\end{proposition}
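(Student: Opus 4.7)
The plan is to exploit the two pieces of information we have on $u_\e$, namely the critical value identity $I_\e(u_\e)=m_\e$ and the constraint $J_\e(u_\e)=0$ coming from $u_\e\in\M_\e$, to extract bounds on $\|\n u_\e\|_2$ and on the nonlocal quantity $c(u_\e)=\ird h_{u_\e}^2 u_\e^2/|x|^2\,dx$. The bound on the $L^4$ norm will then follow for free from Proposition \ref{prop:dis}.

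More precisely, using the notation $a(u),b(u),c(u)$ introduced in the proof of Proposition \ref{me<C}, the identity $J_\e(u_\e)=0$ reads
\[
\frac{(p+1)\a-2}{p+1}\|u_\e\|_{p+1}^{p+1}=\a\,a(u_\e)+\e(\a-1)b(u_\e)+(3\a-2)c(u_\e).
\]
Plugging this into $I_\e(u_\e)=m_\e$ eliminates the $L^{p+1}$-term and yields
\[
m_\e=\left[\tfrac12-\tfrac{\a}{(p+1)\a-2}\right]a(u_\e)+\e\left[\tfrac12-\tfrac{\a-1}{(p+1)\a-2}\right]b(u_\e)+\left[\tfrac12-\tfrac{3\a-2}{(p+1)\a-2}\right]c(u_\e).
\]
The first step is to check that the three coefficients are strictly positive: the first requires $\a>\tfrac{2}{p-1}$, the third requires $\a<\tfrac{2}{5-p}$ when $p\in(3,5)$ (both already built into the choice of $\a$), and the middle one is positive because $(p-1)\a>0$.

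Since by Proposition \ref{me<C} we have $m_\e\le m_1$ for every $\e\in(0,1)$ and $b(u_\e)\ge 0$, the displayed identity gives
\[
a(u_\e)+c(u_\e)\le K m_1
\]
for some constant $K>0$ independent of $\e$. This yields a uniform bound on $\|\n u_\e\|_2$ and on the nonlocal term $c(u_\e)$. Invoking Proposition \ref{prop:dis} we then obtain
\[
\|u_\e\|_4^4\le 4\,(a(u_\e))^{1/2}(c(u_\e))^{1/2}\le C,
\]
so $\|u_\e\|_4$ is uniformly bounded as well, and therefore the $\ch$-norm $\|u_\e\|_{2,4}=\|\n u_\e\|_2+\|u_\e\|_4$ is uniformly bounded in $\e\in(0,1)$.

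I do not foresee any real obstacle here: the computation is essentially the standard mountain-pass-with-Pohozaev boundedness argument, and the only delicate point is confirming the positivity of the three coefficients above, which is precisely why $\a$ was constrained to lie in $\bigl(\tfrac{2}{p-1},\tfrac{2}{5-p}\bigr)$ in the definition of $\M_\e$. Note that the argument gives a uniform bound only on the family $\{u_\e\}_{\e\in(0,1)}$, which is what is needed in the sequel where the limit $\e\searrow 0$ is taken.
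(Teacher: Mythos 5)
Your proof is correct and is exactly the argument the paper intends: the paper states Proposition \ref{pr:bdd} as an ``immediate consequence'' of Proposition \ref{me<C}, the implicit reasoning being precisely your computation --- eliminate the $\|u_\e\|_{p+1}^{p+1}$ term via $J_\e(u_\e)=0$, observe that the three resulting coefficients are positive by the stated choice of $\a$, bound $a(u_\e)+c(u_\e)$ by $Km_1$, and recover $\|u_\e\|_4$ from inequality \eqref{eq:5.1}. Your closing remark that the uniform bound is really for $\e\in(0,1)$ (all that is needed for the limit $\e\searrow 0$) is an accurate reading of how the proposition is used.
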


In the following we fix a decreasing sequence $\{\e_n\}_n$ which tends to zero as $n \to +\infty$.

We define 
\[
\begin{array}{ll}
\dis a_1:=\left(\frac 12 -\frac{(1+\d)\a}{(p+1)\a-2}\right), \; &\dis a_2^n:=\eps_n\left(\frac 12-\frac{(1+\d)(\a-1)}{(p+1)\a-2}\right),
\\[5mm]
\dis a_3:= \left(\frac 12-\frac{(1+\d)(3\a-2)}{(p+1)\a-2}\right),\; 
&\dis a_4:=\frac\d{p+1},
\end{array}
\]
observing that, for $\d>0$ small enough and 
\begin{equation*}
\begin{cases}
\dis \a \in \left( \frac{2}{p-1-2\d}, \frac{4\d+2}{5+6\d-p}\right),&\hbox{ if }
3<p\le 5\\[5mm]
\a >1,&\hbox{ if }
p > 5,
\end{cases}
\end{equation*}
$a_i>0$ for any $i=1,\ldots,4$.\\
For any $n\ge 1$ define $u_n:=u_{\e_n}$, where $u_{\e_n}$ is as in Proposition \ref{pipi}, 
	\begin{equation*}
		\nu_n(\O):= a_1\int_\O|\n u_n|^2\, dx+a_2^n\int_\O u_n^2\, dx
		+a_3\int_\O\frac{h_{u_n}^2u_n^2}{|x|^2} dx+a_4\int_\O u_n^{p+1}\, dx,
	\end{equation*}
for any measurable $\O\subset \R^2$,  and
	\begin{equation*}
		G_n(u):= a_1\ird|\n u|^2\, dx+a_2^n\ird u^2\, dx
		+a_3\ird\frac{h_u^2 u^2}{|x|^2} dx+a_4\ird |u|^{p+1}\,dx
	\end{equation*}
for any $u\in\Hr$.
Of course $\nu_n(\RD)=G_n(u_n)=I_{\e_n}(u_n)=m_{\e_n}=\inf_{u\in \M_{\e_n}}I_{\e_n}(u)$.\\
By Proposition \ref{me<C}, we assume that, up to a subsequence, 
	\begin{equation}\label{eq:mn}
\lim_n	\nu_n(\RD)=\lim_n m_{\e_n}= m >0.
	\end{equation}
By \cite{L1,L2} there are three possibilities:
	\begin{itemize}
		\item[1.] {\it concentration:} there exists a sequence $\{\xi_n\}_n$ in $\RD$ with the following property: for any $\epsilon> 0$, there exists $r = r(\epsilon) > 0$ such that
\[
\nu_n(B_r(\xi_n))\ge c-\epsilon;
\]
		\item[2.] {\it vanishing:} for all $r > 0$ we have that
\[
\lim_n \sup_{\xi\in\RD} \nu_n(B_r(\xi))=0;
\]
		\item[3.] {\it dichotomy:} there exist two sequences of positive measures $\{\nu_n^1\}_n$ and $\{\nu_n^2\}_n$, a positively diverging sequence of numbers $\{R_n\}_n,$ and $\tilde m \in (0,m)$ such that
			\begin{align*}
				&0\le \nu_n^1 + \nu_n^2\le \nu_n,\quad \nu_n^1(\RD)\to \tilde m,\quad \nu_n^2(\RD)\to m-\tilde m \\
				& {\rm Supp}\, \nu_n^1\subset B_{R_n},\quad {\rm Supp}\, \nu_n^2\subset B_{2R_n}^c.
			\end{align*}			
	\end{itemize}

\begin{proposition}\label{conc}
Concentration holds and, moreover, the sequence $\{\xi_n\}_n$ is bounded.
\end{proposition}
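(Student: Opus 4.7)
My plan is to apply the concentration--compactness trichotomy to $\{\nu_n\}$, rule out the vanishing and dichotomy alternatives, and then use the radial symmetry of $u_n$ to bound the concentration centres.

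First I would rule out vanishing. Since $\nu_n(B_r(\xi))\ge a_4\int_{B_r(\xi)}|u_n|^{p+1}\,dx$, vanishing forces $\sup_{\xi\in\RD}\int_{B_r(\xi)}|u_n|^{p+1}\,dx\to 0$. By Proposition \ref{pr:bdd} combined with Proposition \ref{pr:embedding}, the family $\{u_n\}$ is bounded in $L^q(\RD)$ for every $q\in[4,+\infty)$, so a Lions-type lemma applied with exponents straddling $p+1$ yields $\|u_n\|_{p+1}\to 0$. Inserting this into the identity $J_{\varepsilon_n}(u_n)=0$, whose coefficients on $\|\nabla u_n\|_2^2$ and $\ird h_{u_n}^2 u_n^2/|x|^2\,dx$ are strictly positive for our choice of $\alpha$, forces all terms on the left-hand side to vanish. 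The chain of inequalities at the end of the proof of Proposition \ref{me<C} then gives $m_{\varepsilon_n}\to 0$, contradicting the positive lower bound established there.

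The main obstacle will be ruling out dichotomy. I would pick a radial cutoff $\chi\in C_0^\infty(\RD)$ with $\chi\equiv 1$ on $B_1$, $\chi\equiv 0$ outside $B_2$, set $\chi_n(x)=\chi(x/R_n)$, and split $u_n=u_n^{(1)}+u_n^{(2)}$ with $u_n^{(1)}=\chi_n u_n$. The local terms of $I_{\varepsilon_n}$, $J_{\varepsilon_n}$ and $G_n$ split additively modulo errors controlled by $\nu_n$ on the annulus $\{R_n\le|x|\le 2R_n\}$, which is $o(1)$ by the dichotomy assumption; cutoff errors on the gradient are handled using the $L^4$-bound of $u_n$ and the size of the annulus. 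The crucial point is the super-additivity of the nonlocal term: in polar coordinates $h_{u_n^{(2)}}$ vanishes on the support of $u_n^{(1)}$ and $h_{u_n^{(1)}}$ is constant on the support of $u_n^{(2)}$, so expanding $(h_{u_n^{(1)}}+h_{u_n^{(2)}})^2$ leaves only non-negative cross-terms. Since the coefficient of the nonlocal term is positive in both $I_{\varepsilon_n}$ and $J_{\varepsilon_n}$, both functionals become super-additive. In particular $J_{\varepsilon_n}(u_n^{(1)})+J_{\varepsilon_n}(u_n^{(2)})\le o(1)$, so some index $i$ satisfies $J_{\varepsilon_n}(u_n^{(i)})\le o(1)$. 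Projecting this piece onto $\M_{\varepsilon_n}$ via $v_n=t_n^\alpha u_n^{(i)}(t_n\cdot)$ with $J_{\varepsilon_n}(v_n)=0$ requires $t_n\le 1+o(1)$, and since every exponent appearing in $G_n$ under the rescaling is positive, we obtain $m_{\varepsilon_n}\le I_{\varepsilon_n}(v_n)=G_n(v_n)\le G_n(u_n^{(i)})+o(1)\to\tilde m_i$. Combined with $\tilde m_i<m$ (the very definition of dichotomy), this contradicts $m_{\varepsilon_n}\to m$.

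Finally, boundedness of $\{\xi_n\}$ follows from the rotational invariance of $\nu_n$, inherited from the radiality of $u_n$. If $|\xi_n|\to+\infty$ along a subsequence, then for any $\epsilon>0$ and the associated radius $r=r(\epsilon)$ one can pack $N_n\sim|\xi_n|/r\to+\infty$ pairwise disjoint rotated copies $B_r(T_k\xi_n)$ on the circle of radius $|\xi_n|$. Each carries $\nu_n$-mass at least $c-\epsilon$, so $\nu_n(\RD)\ge N_n(c-\epsilon)\to+\infty$, contradicting \eqref{eq:mn}.
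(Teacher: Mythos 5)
Your treatment of vanishing and of the boundedness of $\{\xi_n\}_n$ is correct and essentially the paper's route (the packing argument with rotated disjoint balls is a legitimate fleshing-out of the paper's one-line appeal to radial symmetry), and your dichotomy setup --- cutoff splitting, annulus errors of order $o_n(1)$, superadditivity of $G_n$ and of $J_{\e_n}$ coming from the nonnegativity of the two pieces --- matches the paper's first step and its inequalities \eqref{eq:Gun}--\eqref{eq:Jun}. (A small inaccuracy: $h_{u_n^{(1)}}$ is constant only outside $B_{2R_n}$, not on all of the support of $u_n^{(2)}$; this is harmless since the annulus is negligible.)

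The genuine gap is the sentence ``projecting this piece onto $\M_{\e_n}$ \dots requires $t_n\le 1+o(1)$.'' Writing $g_n(t)=J_{\e_n}\big((u_n^{(i)})_t\big)$ and dividing by $t^{(p+1)\a-2}$ one sees that $g_n$ has a unique zero and that $J_{\e_n}(u_n^{(i)})\le 0$ does force $t_n\le 1$; but $J_{\e_n}(u_n^{(i)})=\delta_n\to 0^+$ forces nothing about $t_n$ unless the piece is non-degenerate: if $u_n^{(i)}\to 0$ in the relevant norms, all coefficients of $g_n$ vanish and $t_n$ may diverge, destroying the chain $m_{\e_n}\le G_n\big((u_n^{(i)})_{t_n}\big)\le G_n(u_n^{(i)})+o(1)$. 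This is precisely why the paper has its Case 3: there both $J$'s are positive, hence each is $o_n(1)$, and if $\liminf_n t_n>1$ the identity $o_n(1)=J_{\e_n}(v_n)-t_n^{-((p+1)\a-2)}J_{\e_n}\big((v_n)_{t_n}\big)$ forces $\|\n v_n\|_2\to 0$ and $\ird h_{v_n}^2v_n^2/|x|^2\,dx\to 0$, hence $\|v_n\|_{p+1}\to 0$, which contradicts the non-degeneracy bound $\liminf_n G_n(v_n)=\tilde m>0$ proved in the paper's step 2 via the key structural fact $h_{u_n}=h_{v_n}$ on $B_{R_n}$, so that $G_n(v_n)\ge \nu_n^1(B_{R_n})\to\tilde m$ (see \eqref{superfico}). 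Your proposal never establishes any such lower bound, and --- worse --- your ``some index $i$'' may well be the outer piece $u_n^{(2)}$, for which no analogue of step 2 exists: $h_{u_n^{(2)}}$ loses the entire inner contribution of $h_{u_n}$, so $G_n(u_n^{(2)})$ can be $o(1)$ even while $\nu_n^2(\RD)\to m-\tilde m>0$, the mass at infinity being carried by the term $a_3\, h_{u_n}^2u_n^2/|x|^2$ with $h_{u_n}\gg h_{u_n^{(2)}}$. The paper sidesteps this by touching the outer piece only in the regime $J_{\e_n}(w_n)\le 0$ (its Case 2), where $t_n\le 1$ holds unconditionally, and then deriving the contradiction from the superadditivity of $G_n$ together with the step-2 bound on the \emph{inner} piece, not from any mass count on the outer one. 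To repair your argument you must (i) prove the analogue of step 2 for the inner piece, (ii) route the case ``both $J$'s positive'' through the inner piece only, and (iii) handle a possibly divergent $t_n$ by the scaling identity above rather than by asserting $t_n\le 1+o(1)$.
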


We preliminary prove the following two lemmas.
	\begin{lemma}
			Vanishing does not hold.
	\end{lemma}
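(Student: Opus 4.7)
The plan is to derive a contradiction from the assumption that vanishing holds, using the uniform lower bound $m_{\e_n}\ge C>0$ supplied by Proposition \ref{me<C}. The central idea is that vanishing forces the $L^{p+1}$-norm of $u_n$ to decay globally, and then the Pohozaev--Nehari identity $J_{\e_n}(u_n)=0$ propagates this decay to every other term of $I_{\e_n}(u_n)$, forcing $m_{\e_n}\to 0$.

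First, I would exploit the boundedness of $\{u_n\}_n$ in $\chr$ (Proposition \ref{pr:bdd}) together with the reflexivity of $\ch$ (Proposition \ref{pr:ref}) to extract, up to a subsequence, a weak limit $u_n\weakto u_\infty$ in $\chr$. Since $p+1>4$, the compact embedding in Proposition \ref{prcomp} upgrades this to strong convergence $u_n\to u_\infty$ in $L^{p+1}(\RD)$. The vanishing hypothesis, specialised to $\xi=0$, gives $\int_{B_r}u_n^{p+1}\,dx\to 0$ for every $r>0$; by strong convergence, $\int_{B_r}u_\infty^{p+1}\,dx=0$ for every $r$, so $u_\infty\equiv 0$ on $\RD$ and hence $\|u_n\|_{p+1}\to 0$.

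Next, I would invoke the constraint $u_n\in \M_{\e_n}$, i.e.
$$\a\|\n u_n\|_2^2+\e_n(\a-1)\|u_n\|_2^2+(3\a-2)\ird\frac{h_{u_n}^2u_n^2}{|x|^2}\,dx=\frac{(p+1)\a-2}{p+1}\|u_n\|_{p+1}^{p+1}.$$
Since $\a>1$, each coefficient on the left is strictly positive, while the right-hand side tends to $0$. Therefore each of the three non-negative terms on the left vanishes individually, and combined with $\|u_n\|_{p+1}\to 0$ this yields $I_{\e_n}(u_n)\to 0$, contradicting Proposition \ref{me<C} which asserts $I_{\e_n}(u_n)=m_{\e_n}\ge C>0$.

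The only delicate point I expect is passing from the \emph{local} vanishing of the $L^{p+1}$ mass to a \emph{global} $L^{p+1}$ decay of the whole sequence, which in non-radial settings would require a Lions-type concentration-compactness lemma tailored to $\ch$. What makes this manageable here is the combination of radial symmetry and the assumption $p>3$: together they are precisely what guarantee the compact embedding $\chr\hookrightarrow L^{p+1}(\RD)$ of Proposition \ref{prcomp}, so that inspecting the behaviour on a single ball $B_r$ centred at the origin is enough to force $u_\infty=0$ and therefore $\|u_n\|_{p+1}\to 0$.
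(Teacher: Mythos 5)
Your proof is correct, and its endgame --- feeding $\|u_n\|_{p+1}\to 0$ into the constraint $J_{\e_n}(u_n)=0$ to force $\|\n u_n\|_2$, $\e_n\|u_n\|_2^2$ and the nonlocal term all to zero, hence $m_{\e_n}\to 0$ against the uniform lower bound of Proposition \ref{me<C} --- is exactly the paper's. Where you genuinely diverge is in the key step that upgrades \emph{local} vanishing to \emph{global} $L^{p+1}$ decay. The paper never extracts a weak limit: it first converts the local $L^{p+1}$ vanishing into local $L^4$ vanishing (H\"older on a fixed ball, using $p+1>4$) and then applies the translation-invariant Lions vanishing lemma \cite[Lemma I.1]{L2} to the sequence, bounded in $\ch$ by Proposition \ref{pr:bdd}, to get $u_n\to 0$ in $L^{p+1}(\RD)$ directly. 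You instead use reflexivity (Proposition \ref{pr:ref}) to extract $u_n\weakto u_\infty$ in $\chr$, the Strauss-type compact embedding of Proposition \ref{prcomp} to upgrade this to strong $L^{p+1}$ convergence, and the vanishing hypothesis on balls centred at the origin alone to identify $u_\infty\equiv 0$. Both routes are valid here. Yours is more elementary given the paper's Section \ref{se:ff} toolbox and uses strictly less of the hypothesis (only $\xi=0$, not the $\sup_\xi$); the paper's route, by contrast, makes no use of radial symmetry at this step and so would survive in non-radial settings, which matters because the compactness of $\chr\hookrightarrow L^{p+1}(\RD)$ is genuinely a radial phenomenon --- precisely the caveat you flag yourself. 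One cosmetic point: since vanishing is assumed for the whole sequence, your subsequence extraction is harmless (a contradiction along a subsequence already contradicts $m_{\e_n}\ge C$ for all large $n$), but it is worth stating explicitly; alternatively, a standard subsequence-of-subsequence argument shows $\|u_n\|_{p+1}\to 0$ along the full sequence.
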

	\begin{proof}
If vanishing held, then we would have that
\[
\lim_n \sup_{\xi\in\RD} \int_{B_r(\xi)} u_n^{p+1}=0.
\]
Since $p>3$, we have also that 
\[
\lim_n \sup_{\xi\in\RD} \int_{B_r(\xi)} u_n^{4}=0.
\]
Therefore, since by Proposition \ref{pr:bdd}, the sequence $\{u_n\}_n$ is bounded in $\ch$, by \cite[Lemma I.1]{L2}, we deduce that $u_n\to 0 $ in $L^{p+1}(\RD)$, as $n \to +\infty$, and so, being $J_{\e_n}(u_n)=0$, also $m_{\e_n}\to 0$, contradicting Proposition \ref{me<C}.
\end{proof}

	\begin{lemma}
			Dichotomy does not hold.
	\end{lemma}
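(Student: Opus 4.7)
The plan is to rule out dichotomy by contradiction, exploiting that both the radial decay of $u_n$ at infinity (Proposition \ref{pr:radial}) and the uniform $\ch$-bound (Proposition \ref{pr:bdd}) together force the exterior mass of $\nu_n$ to vanish, which will contradict the dichotomy assumption $\nu_n^2(\RD)\to m-\tilde m>0$.

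For a fixed $\tau\in\bigl(0,\tfrac 14\bigr)$, Proposition \ref{pr:radial} and Proposition \ref{pr:bdd} give, uniformly in $n$, the pointwise bound $|u_n(x)|\le C\,|x|^{-\tau}$ for $|x|\ge R_\tau$. Since $p>3$ and $R_n\to+\infty$, this immediately yields
\[
\int_{|x|\ge R_n}u_n^{p+1}\,dx\le \sup_{|x|\ge R_n}|u_n(x)|^{p-3}\int_{\RD}u_n^4\,dx\le \frac{C}{R_n^{\tau(p-3)}}\longrightarrow 0.
\]

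Next, I would introduce a smooth radial cutoff $\chi_n\colon\RD\to[0,1]$ with $\chi_n\equiv 0$ on $B_{R_n}$, $\chi_n\equiv 1$ on $B_{2R_n}^c$, $|\n\chi_n|\le C/R_n$ and $|\Delta\chi_n|\le C/R_n^2$. Testing the equation \eqref{eq-eps} satisfied by $u_n$ against $\chi_n u_n$, using $u_n\n u_n=\tfrac 12\n(u_n^2)$ and integration by parts (there are no boundary contributions since $\n\chi_n$ is supported in the annulus $R_n\le|x|\le 2R_n$), one gets
\begin{multline*}
\ird\chi_n|\n u_n|^2\,dx-\frac 12\ird u_n^2\,\Delta\chi_n\,dx+\eps_n\ird\chi_n u_n^2\,dx\\
+\ird\chi_n\Bigl(\frac{h_{u_n}^2(|x|)}{|x|^2}+\int_{|x|}^{+\infty}\frac{h_{u_n}(s)}{s}u_n^2(s)\,ds\Bigr)u_n^2\,dx=\ird\chi_n u_n^{p+1}\,dx.
\end{multline*}
The right-hand side is $o(1)$ by the previous display. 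The only term with possibly mixed sign on the left is the $\Delta\chi_n$-term, which we control again by the radial decay: on the annulus $R_n\le|x|\le 2R_n$, $u_n^2\le C\,R_n^{-2\tau}$ and $|\Delta\chi_n|\le C\,R_n^{-2}$, hence
\[
\Bigl|\ird u_n^2\,\Delta\chi_n\,dx\Bigr|\le \frac{C}{R_n^2}\cdot\frac{C}{R_n^{2\tau}}\cdot C\,R_n^2=\frac{C}{R_n^{2\tau}}\longrightarrow 0.
\]

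All remaining terms on the left-hand side are non-negative (recall that the nonlocal potentials $h_{u_n}^2/|x|^2$ and $\int_{|x|}^{+\infty}(h_{u_n}/s)u_n^2\,ds$ are $\ge 0$), so each of them must individually be $o(1)$. Since $\chi_n=1$ on $B_{2R_n}^c$, this gives
\[
\int_{|x|\ge 2R_n}|\n u_n|^2\,dx\to 0,\qquad \eps_n\int_{|x|\ge 2R_n}u_n^2\,dx\to 0,\qquad \int_{|x|\ge 2R_n}\frac{h_{u_n}^2 u_n^2}{|x|^2}\,dx\to 0.
\]
Recalling that $a_2^n=O(\eps_n)$ and combining with the $L^{p+1}$-bound above, we conclude $\nu_n(B_{2R_n}^c)\to 0$. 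Since $\supp\nu_n^2\subset B_{2R_n}^c$ and $\nu_n^2\le\nu_n$, this forces $\nu_n^2(\RD)\to 0$, contradicting the dichotomy assumption. The main technical obstacle is that the natural $H^1$-bound yields only $\|u_n\|_2\le C/\sqrt{\eps_n}$, which degenerates as $\eps_n\to 0$, so a standard cutoff estimate based on the $L^2$-norm fails; the Strauss-type radial decay of Proposition \ref{pr:radial} is precisely what is needed to gain the extra power of $R_n^{-2\tau}$ on the annulus and beat the $R_n^{-2}$ loss produced by $\Delta\chi_n$.
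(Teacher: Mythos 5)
Your proof is correct, and it takes a genuinely different route from the paper's. You rule out dichotomy by showing outright that no $\nu_n$-mass can survive far from the origin: you test $I_{\e_n}'(u_n)[\chi_n u_n]=0$ (which, after the Fubini rewriting of the derivative of the nonlocal term, is exactly the identity you display), use that the terms $\e_n\ird\chi_n u_n^2\,dx$, $\ird\chi_n\frac{h_{u_n}^2u_n^2}{|x|^2}\,dx$ and $\ird\chi_n V_{u_n}u_n^2\,dx$ are all nonnegative, and kill the right-hand side via the decay $|u_n(x)|\le C|x|^{-\tau}$, which is indeed uniform in $n$ because Proposition \ref{pr:radial} scales with $\|u_n\|_{2,4}$ and Proposition \ref{pr:bdd} bounds these norms; since $\nu_n^2\le\nu_n$ and ${\rm Supp}\,\nu_n^2\subset B_{2R_n}^c$, the conclusion $\nu_n(B_{2R_n}^c)\to 0$ contradicts $\nu_n^2(\RD)\to m-\tilde m>0$. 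The paper's proof never uses the radial decay: it splits $u_n=\rho_n u_n+(1-\rho_n)u_n=v_n+w_n$, shows the transition annulus carries no mass, proves $\liminf_n G_n(v_n)=\tilde m$, and then runs a three-case comparison on the signs of $J_{\e_n}(v_n)$ and $J_{\e_n}(w_n)$, using the scaling $t^{\a}u(t\,\cdot)$ and the minimality of $m_{\e_n}$ on $\M_{\e_n}$. Your route is shorter and in fact stronger: it establishes tightness of $\{\nu_n\}_n$ along any diverging $\{R_n\}_n$, which also excludes vanishing (cover a fixed ball by finitely many balls of radius $r$) and directly yields concentration at bounded centers, so the full trichotomy case analysis becomes nearly superfluous; one small remark is that the $\Delta\chi_n$ error term does not actually require the decay, since H\"older with the uniform $L^4$-bound already gives $O(R_n^{-1/2})$ as in the paper's first step — the Strauss-type estimate is essential only for the exterior $L^{p+1}$ term. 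The price of your argument is that it leans on radial symmetry (through Proposition \ref{pr:radial}) and on $u_n$ being an exact solution of \eqref{eq-eps} with signed nonlocal potentials, whereas the paper's manifold-comparison argument uses only the minimality of $u_n$ and would survive in settings where no uniform pointwise decay is available.
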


	\begin{proof}
			As usual, we perform a proof by contradiction assuming that, on the contrary, dichotomy holds.\\
			Define $\rho_n\in C^1_0(\RD,[0,1])$ radial such that, for any $n\ge 1$,  $\rho_n\equiv 1$ in $B_{R_n}$, $\rho_n\equiv 0$ in $ B_{2R_n}^c$ and $\sup_{x\in\RD}|\n\rho_n(x)|\le \frac{2}{R_n}$. Moreover set $v_n=\rho_nu_n$ and $w_n=(1-\rho_n)u_n$, observing that $v_n, w_n\in \Hr$.\\
			Now we proceed by steps.
			
			{\it 1st step}: we prove that, defined $\O_n=\{x\in\RD: R_n\le |x|\le 2R_n\}$, we have
	\begin{equation}\label{eq:on}
		a_1\int_{\O_n}|\n z_n|^2\, dx+a_2^n\int_{\O_n} z_n^2\, dx+a_3\int_{\O_n}\frac{h_{z_n}^2 z_n^2}{|x|^2} dx+a_4\int_{\O_n}z_n^{p+1}\,dx\to 0,
	\end{equation}
for $z_n$ equal to $u_n$, $v_n$ and $w_n$.\\
Indeed observe that 
	\begin{align*}
		\nu_n(\O_n)&=m-\nu_n(B_{R_n})-\nu_n(B^c_{2R_n})+o_n(1)\\
							&\le m - \nu^1_n(B_{R_n})-\nu^2_n(B^c_{2R_n})+o_n(1)=o_n(1)
	\end{align*}
and then we deduce \eqref{eq:on} for $u_n$. \\
	By simple computations
		\begin{align*}
			&a_1\int_{\O_n}|\n v_n|^2\, dx+a_2^n\int_{\O_n} v_n^2\, dx
			+a_3\int_{\O_n}\frac{h_{v_n}^2v_n^2}{|x|^2} dx+a_4\int_{\O_n}v_n^{p+1}\,dx\\
			&\qquad\le 2 a_1\int_{\O_n}\left(|\n u_n|^2+\frac4{R^2_n} u_n^2\right)\, dx+a_2^n\int_{\O_n} u_n^2\, dx
+a_3\int_{\O_n}\frac{h_{u_n}^2 u_n^2}{|x|^2} dx+a_4\int_{\O_n}u_n^{p+1}\,dx
\\
			&\qquad\le \frac{8a_1}{R^2_n}\left(\int_{|x|\le 2R_n} 1\,dx\right)^{\frac 12}\|u_n\|_4^2 +o_n(1)\\
			&\qquad=\frac{16a_1\sqrt \pi}{R_n}\|u_n\|_4^2 +o_n(1)=o_n(1)
		\end{align*}
	and then we have proved \eqref{eq:on} also for $v_n$. The proof for $w_n$ is analogous.
	
	{\it 2nd step}: $\liminf_n G_n(v_n)=\tilde{m}$.\\
Observe, indeed, that since $h_{u_n}=h_{v_n}$ in $B_{R_n}$,  we have
\begin{equation}\label{superfico}
G_n(v_n)\ge \nu_n (B_{R_n})\ge \nu_n^1 (B_{R_n})\to \tilde{m},
\end{equation} 
	Now, observe that, by the first step and considering that $\nu_n\ge\nu_n^2$,
		\begin{align*}
			m&=\lim_n\nu_n(\RD)=\lim_n (\nu_n(B_{R_n})+\nu_n(B_{2R_n}^c))\\
			&\ge \liminf_n G_n(v_n)+\lim_n\nu_n^2(B_{2R_n}^c).
		\end{align*}
	Since $\lim_n\nu_n^2(\RD)=m-\tilde m$ and ${\rm Supp}\, \nu_n^2\subset B_{2R_n}^c$, we conclude that 
	$$\liminf_n G_n(v_n)=\tilde m.$$
	
	{\it 3rd step}: conclusion.\\
 First of all observe that, since $u_n=v_n+w_n$ and both $v_n$ and $w_n$ are nonnegative, then by the first step
		\begin{equation}\label{eq:Gun}
			G_n(u_n)\ge G_n(v_n)+G_n(w_n)+o_n(1). 
		\end{equation}
	Observe that, by step 1, 
		\begin{equation}\label{eq:Jun}
			0=J_{\e_n}(u_n)\ge J_{\e_n}(v_n)+J_{\e_n}(w_n)+o_n(1).
		\end{equation}
For any $n\in \N$, let $t_n, s_n>0$ be the numbers, respectively, such that $(v_n)_{t_n}\in \M_{\e_n}$ and  $(w_n)_{s_n}\in \M_{\e_n}$.

	There are three possibilities.

	{\it Case 1}:  up to a subsequence, $J_{\e_n}(v_n)\le 0$. 
\\
By simple computations we see that $t_n\le 1$ and then we have
				\begin{equation*}
					m_{\e_n}\le I_{\e_n}((v_n)_{t_n})=G_n((v_n)_{t_n})\le G_n(v_n)
				\end{equation*}
			which, for a large $n\ge 1$, leads to a contradiction due to the fact that, by \eqref{eq:mn} and step 2,
				$$\lim_n m_{\e_n}=m>\tilde m= \liminf_nG_n(v_n).$$

	{\it Case 2}:  up to a subsequence, $J_{\e_n}(w_n)\le 0.$\\
Then, proceeding as in the first case, by \eqref{superfico} and using \eqref{eq:Gun}, we have, for $n$ sufficiently large,
				\begin{equation*}
					m_{\e_n}\le I_{\e_n}((w_n)_{t_n})=G_n((w_n)_{t_n})\le G_n(w_n)\le G_n(u_n),
				\end{equation*}
			which, by \eqref{eq:mn}, implies $m=\lim_n G_n(w_n)$. Then, passing to the limit in \eqref{eq:Gun}, we have
				$$m\ge m + \liminf_nG_n(v_n)$$ 
			which contradicts the result obtained in step 2.

	{\it Case 3}:  there exists $n_0\ge 1$ such that for all $n\ge n_0$ both $J_{\e_n}(v_n)>0$ and $J_{\e_n}(w_n)>0$. 
\\
Then $\liminf_nt_n\ge1$ and, by \eqref{eq:Jun}, we also have that $J_{\e_n}(v_n)=o_n(1)$. \\
			If $ \liminf_n t_n = 1$, we can repeat the computations performed in the first case and get the contradiction. If $\liminf_n t_n >1$, from
				\begin{align*}
					o_n(1)& = J_{\e_n}(v_n)-\frac 1{t_n^{(p+1)\a-2}}J_{\e_n}((v_n)_{t_n})\\
					&=\a\left(1-\frac 1{t_n^{(p-1)\a-2}}\right)\|\n v_n\|_2^2+\eps_n(\a-1)\left(1-\frac 1{t_n^{(p-1)\a}}\right)\|v_n\|_2^2\\
					&\qquad+(3\a-2)\left(1-\frac 1{t_n^{(p-5)\a+2}}\right)\ird\frac{h_{v_n}^2 v_n^2}{|x|^2}dx
				\end{align*}
			we deduce that 
				\begin{align*}
					&\|\n v_n\|_2\to 0, \\
					&\eps_n\|v_n\|_2\to 0,\\
					&\ird\frac{h_{v_n}^2 v_n^2(x)}{|x|^2} dx\to 0
				\end{align*}
			and, as a consequence, also $\|v_n\|_{p+1}\to 0$ by Propositions \ref{prop:dis} and \ref{pr:embedding}. Of course, we get a contradiction since $\liminf_n G_n(v_n)>0$ by step 2.
	\end{proof}

\begin{proof}[Proof of Proposition \ref{conc}]
By the previous two lemmas we conclude that concentration holds. Moreover, the symmetry property of the functions $u_n$ guarantees the boundedness of $\{\xi_n\}_n$.
\end{proof}

The next two propositions provide fundamental integrability properties related to the nonlocal terms.

\begin{proposition}\label{pr:main}
   There exists $u_0\in  \chr$ such that, up to a subsequence, $u_n \weakto u_0$ in $ \ch$ and moreover
\begin{enumerate}[label=(\roman{*}), ref=\roman{*}]
\item \label{hlinfinito}$\frac{h_{u_0}}{|x|}\in L^\infty(\RD)$;
   		\item \label{hl2} $\frac{h_{u_0}}{|x|}u_0\in L^2(\RD),$ and
   			\begin{equation}\label{u011}
   				\frac{h_{u_n}}{|x|}u_n\to\frac{h_{u_0}}{|x|}u_0\qquad\hbox{ in }L^2(\RD);
   			\end{equation}
   		\item \label{h2l2} $\frac{h^2_{u_0}}{|x|^2}u_0\in L^2(\RD)$;
\item \label{(hu2)u}  $V_{u_0}(x)=\dis\int_{|x|}^{+\infty}\frac {h_{u_0}(s)}{s}u_0^2(s)\,ds$ is well defined and continuous in $\RD$.
\end{enumerate}

\end{proposition}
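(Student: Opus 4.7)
The plan is to first extract a weak limit $u_0$ by reflexivity of $\ch$, upgrade to a.e.\ and local-$L^q$ convergence by compactness, and finally verify (\ref{hlinfinito})--(\ref{(hu2)u}) exploiting the concentration information in Proposition \ref{conc} to pass to the limit in the nonlocal quantities.

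By Proposition \ref{pr:bdd} and Proposition \ref{pr:ref} there exists $u_0\in\chr$ with, up to a subsequence, $u_n\weakto u_0$ in $\ch$; by Remark \ref{re:loc} and Proposition \ref{prcomp} we may further assume $u_n\to u_0$ a.e., in $L^q_{\rm loc}(\RD)$ for every $q\in[1,+\infty)$, and in $L^q(\RD)$ for every $q\in(4,+\infty)$. In particular, for each fixed $r>0$, $h_{u_n}(r)=\frac{1}{4\pi}\int_{B_r}u_n^2\,dx\to h_{u_0}(r)$ pointwise. Item (\ref{hlinfinito}) is a direct Cauchy--Schwarz estimate: $\int_{B_r}u_0^2\,dx\le\sqrt{\pi}\,r\|u_0\|_4^{2}$ gives $h_{u_0}(r)/r\le C\|u_0\|_4^{2}$, and the same bound holds uniformly for every $u_n$, a fact that will be used below. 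Item (\ref{h2l2}) is then immediate from (\ref{hlinfinito}) and (\ref{hl2}), since $\ird\frac{h_{u_0}^{4}u_0^{2}}{|x|^{4}}\,dx\le\|h_{u_0}/|x|\|_\infty^{2}\,\ird\frac{h_{u_0}^{2}u_0^{2}}{|x|^{2}}\,dx<+\infty$.

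The core of the proof is item (\ref{hl2}). Existence of $\frac{h_{u_0}}{|x|}u_0\in L^2(\RD)$ follows by Fatou's lemma applied to the uniform bound $\ird\frac{h_{u_n}^{2}u_n^{2}}{|x|^{2}}\,dx\le C m_1$ coming from $I_{\e_n}(u_n)=m_{\e_n}\le m_1$ (Proposition \ref{me<C}), together with the pointwise convergence above. For the strong $L^2(\RD)$ convergence I would split $\RD=B_R\cup B_R^{c}$. Proposition \ref{conc}, with $\{\xi_n\}$ bounded, yields for every $\eta>0$ some $R>0$ with $\int_{|x|>R}\frac{h_{u_n}^{2}u_n^{2}}{|x|^{2}}\,dx<\eta$ uniformly in $n$, and Fatou carries the same bound to $u_0$; hence the tail on $B_R^{c}$ is arbitrarily small. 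On $B_R$, the uniform pointwise estimate $\frac{h_{u_n}^{2}}{|x|^{2}}\le C$ from (\ref{hlinfinito}) combined with $u_n\to u_0$ in $L^2(B_R)$ allows a dominated-convergence (Vitali) argument with varying dominator, yielding $\int_{B_R}\bigl|\frac{h_{u_n}}{|x|}u_n-\frac{h_{u_0}}{|x|}u_0\bigr|^{2}\,dx\to 0$. Letting $\eta\to 0$ produces \eqref{u011}.

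Finally, for (\ref{(hu2)u}) I would first note that $u_0\not\equiv 0$: concentration rules out vanishing, so $\|u_n\|_{p+1}$ is bounded below, and compactness $\chr\hookrightarrow L^{p+1}(\RD)$ transfers this to $u_0$. Then $h_{u_0}$, being nondecreasing, satisfies $h_{u_0}(s_0)>0$ for some $s_0>0$, and for $|x|\ge s_0$ one estimates $V_{u_0}(x)\le\frac{1}{h_{u_0}(s_0)}\int_{|x|}^{+\infty}\frac{h_{u_0}^{2}(s)u_0^{2}(s)}{s}\,ds$, which is finite by (\ref{hl2}) rewritten in polar coordinates. For $|x|<s_0$ (including $x=0$) the remaining piece is controlled, via (\ref{hlinfinito}), by $C\int_0^{s_0}u_0^{2}(s)\,ds$, and this one-dimensional integral is finite thanks to the H\"older estimate
\begin{equation*}
\int_0^{s_0}u_0^{2}(s)\,ds\le\Bigl(\int_0^{s_0}u_0^{q}(s)\,s\,ds\Bigr)^{2/q}\Bigl(\int_0^{s_0}s^{-2/(q-2)}\,ds\Bigr)^{(q-2)/q},
\end{equation*}
which converges precisely because $u_0\in L^q(\RD)$ for some $q>4$ by Proposition \ref{pr:embedding}. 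Continuity of $V_{u_0}$ on $\RD$ is then an immediate consequence of the absolute continuity of the Lebesgue integral. The main difficulty in the whole scheme is the strong $L^2$ convergence in (\ref{hl2}); without the concentration provided by Proposition \ref{conc} the tails at infinity cannot be controlled uniformly, and weak compactness alone would not suffice.
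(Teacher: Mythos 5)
Your proof is correct, and its skeleton --- weak limit via Propositions \ref{pr:bdd} and \ref{pr:ref}, pointwise convergence of $h_{u_n}$, the Cauchy--Schwarz bound for item (i), the splitting of $\RD$ into $B_R$ and its complement with the uniform tail estimate from Proposition \ref{conc} to get \eqref{u011}, and item (iii) as a corollary of (i)--(ii) --- coincides with the paper's. Two steps genuinely differ. First, for the membership $\frac{h_{u_0}}{|x|}u_0\in L^2(\RD)$ you use Fatou's lemma with the a.e.\ convergence of the integrand, whereas the paper argues by contradiction, comparing $\int_{B_{R_m}}\frac{h_{u_0}^2u_0^2}{|x|^2}\,dx$ with the limit level $m$ through the local convergence \eqref{eq:convbou}; your route is more economical, but phrase the uniform bound correctly: $I_{\e_n}(u_n)\le m_1$ alone does not control the nonlocal term, because of the negative $L^{p+1}$ contribution --- one must first use $J_{\e_n}(u_n)=0$ to rewrite $m_{\e_n}=G_n(u_n)$ as a positive-coefficient combination, whence $a_3\ird\frac{h_{u_n}^2u_n^2}{|x|^2}\,dx\le m_1$ (this is exactly the content of the identity $\nu_n(\RD)=G_n(u_n)=m_{\e_n}$). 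Second, for item (iv) the paper proves $V_{u_0}(0)=\frac1{2\pi}\ird\frac{h_{u_0}}{|x|^2}u_0^2\,dx<+\infty$ by splitting into $B_1$ and $B_1^c$, pairing $\frac{h_{u_0}}{|x|}u_0\in L^2(\RD)$ with $\frac{u_0}{|x|}\in L^2(B_1^c)$ via Cauchy--Schwarz at infinity; you instead exploit the monotonicity of $h_{u_0}$, absorbing one factor through $h_{u_0}(s)\ge h_{u_0}(s_0)>0$ so that the tail reduces directly to item (ii). That trick costs you the digression on $u_0\not\equiv 0$ (which is sound, though dispensable: if $u_0\equiv 0$ then $V_{u_0}\equiv 0$ and there is nothing to prove), while the paper's pairing works for any finite-energy function; near the origin your one-dimensional H\"older estimate with $q>4$ is the radial rewriting of the paper's $L^6$ estimate on $B_1$. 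Both versions of each step are sound, so nothing needs repair.
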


\begin{proof}
	The existence of $u_0\in\ch$ is guaranteed by the fact that, since $\{G_n(u_n)\}_n$ is bounded, $\{u_n\}_n$ is bounded in $\chr$ and then it possesses a weakly convergent subsequence by Proposition \ref{pr:ref}.\\
	We can assume that such a sequence, relabeled $\{u_n\}_n$, is such that
		\begin{align*}
			&u_n\to u_0\hbox{ a.e. in }\RD\hbox{ (and then $u_0$ is radial and nonnegative)}\\
			&u_n\to u_0\hbox{ in }L^q(B), \hbox{ for all } B\subset\RD \hbox { bounded and } q\ge 1. 
		\end{align*}
To prove \eqref{hlinfinito}, observe that, for any $u\in L^4(\RD)$ and for any $x\in \RD\setminus \{0\}$, we have that
\[
\frac{h_u(x)}{|x|}=\frac{1}{4\pi|x|}\int_{B_{|x|}}u^2\, dy 
\le \frac{1}{4\pi|x|}\left(\int_{B_{|x|}} dy \right)^\frac 12
\left(\int_{B_{|x|}}u^4  dy \right)^\frac 12
\le C \|u\|_4^2.
\]
Therefore,
 since $u_0\in L^4(\RD)$ and $\{u_n\}_n$ is bounded in $L^4(\RD)$, we have
		\begin{equation}\label{linf}
			\frac{h_{u_0}}{|x|}\in L^{\infty}(\RD)\quad\hbox{ and }\quad \left\{\frac{h_{u_n}}{|x|}\right\}_n \hbox{ is bounded in } L^{\infty}(\RD).
		\end{equation}
	We prove \eqref{hl2}. 
	First of all we show that, for all $B\subset \RD $ bounded, we have
		\begin{equation}\label{eq:convbou}
			\int_B\left(\frac {h_{u_n}u_n-h_{u_0}u_0}{|x|}\right)^2\,dx\to 0.
		\end{equation}
	Indeed, since $u_n\to u_0$ in $L^2(B)$ for every $B\subset\RD$ bounded, we have that
		\begin{equation}\label{pointwise}
			h_{u_n}(x)\to h_{u_0}(x)\hbox{  for all }x\in\RD.
		\end{equation}
	By \eqref{linf}, \eqref{pointwise} and the dominated convergence theorem we obtain
		\begin{equation*}
			\int_B\left(\frac{h_{u_n}-h_{u_0}}{|x|}\right)^2u_0^2\,dx\to 0.
		\end{equation*}
Hence we deduce that
		\begin{align*}
			\int_B\left(\frac {h_{u_n}u_n-h_{u_0}u_0}{|x|}\right)^2\,dx&\le 2\left(	\int_B\frac{h^2_{u_n}}{|x|^2}(u_n-u_0)^2\,dx+\int_B\left(\frac{h_{u_n}-h_{u_0}}{|x|}\right)^2u_0^2\,dx\right)\\
			&\le \|h_{u_n}/|x|\|^2_{\infty}\|u_n-u_0\|_{L^2(B)}^2+o_n(1)
		\end{align*}
	and we obtain \eqref{eq:convbou}.
	\\
	By contradiction, suppose now that $\frac{h_{u_0}}{|x|}u_0\notin L^2(\RD)$. Then, for every $M\ge 0$, there exists $R>0$ such that
	\begin{equation*}
\int_{B_R}\frac{h_{u_0}^2u_0^2}{|x|^2}dx\ge M.
	\end{equation*}
In particular, there exists $R_m>0$ such that
	\begin{equation}\label{eq:ibr}
\int_{B_{R_m}}\frac{h_{u_0}^2u_0^2}{|x|^2}dx		\ge m+1
	\end{equation}
where  $m$ is defined in \eqref{eq:mn}.
By \eqref{eq:convbou} and \eqref{eq:ibr}, we get 
	\begin{equation*}
		\lim_n \int_{B_{R_m}}\frac{h_{u_n}^2u_n^2}{|x|^2}dx\ge m+1.
	\end{equation*} 
	which leads to a contradiction comparing with \eqref{eq:mn}.
	\\
Let us now prove that \eqref{u011} holds.\\
By Proposition \ref{conc}, we know that for any $\d>0$ there exists $R_\d>0$ such that uniformly for $n\ge 1$
	\begin{equation}\label{pippa}
		\int_{B_{R_\d}^c}\frac{h_{u_n}^2u_n^2}{|x|^2}\,dx
\le \d.
	\end{equation}
Of course we can assume $R_\d$ large enough to have also
		\begin{equation}\label{sega}
				\int_{B_{R_\d}^c}\frac{h_{u_0}^2u_0^2}{|x|^2}\,dx
\le \d.
		\end{equation}
Then, by \eqref{eq:convbou}, we have
	\begin{align*}
\ird\left(\frac {h_{u_n}u_n-h_{u_0}u_0}{|x|}\right)^2\,dx&\le\int_{B_{R_\d}}\left(\frac {h_{u_n}u_n-h_{u_0}u_0}{|x|}\right)^2\,dx\\
&\qquad\qquad+2\left[\int_{B_{R_\d}^c}\frac{h_{u_n}^2u_n^2}{|x|^2}\,dx+\int_{B_{R_\d}^c}\frac{h_{u_0}^2u_0^2}{|x|^2}\,dx\right]\\
&\le o_n(1) +2\d
\end{align*}
and we conclude.
\\
The proof of \eqref{h2l2}, follows immediately by \eqref{hlinfinito} and \eqref{hl2}.
\\
Finally we prove \eqref{(hu2)u} showing that
\begin{equation}\label{leccaculo}
V_{u_0}(0)=\int_{0}^{+\infty}\frac {h_{u_0}(s)}{s}u_0^2(s)\,ds=\frac 1{2\pi}
\ird \frac{h_{u_0}}{|x|^2}u_0^2 \, dx \in \R,
\end{equation}
which implies also the continuity of $V_{u_0}$.
Observe that $\frac{u_0^2}{|x|}\in L^1(B_1)$. Indeed, we have
\[
\int_{B_1}\frac{u_0^2}{|x|}\, dx
\le\left(\int_{B_1}u_0^6\, dx\right)^\frac 13
\left(\int_{B_1}\frac{1}{|x|^\frac 32}\, dx\right)^\frac 23<+\infty.
\]
This, together with \eqref{hlinfinito}, implies that
\begin{equation}\label{lecca}
\int_{B_1}\frac{h_{u_0}}{|x|^2}u_0^2\, dx
\le \left\|\frac{h_{u_0}}{|x|}\right\|_\infty
\left\|\frac{u_0^2}{|x|}\right\|_{L^1(B_1)}<+\infty.
\end{equation}
Observe, moreover, that $\frac{u_0}{|x|}\in L^2(B_1^c)$. Indeed, we have
\[
\int_{B_1^c}\frac{u_0^2}{|x|^2}\, dx
\le\left(\int_{B_1^c}u_0^4\, dx\right)^\frac 12
\left(\int_{B_1^c}\frac{1}{|x|^4}\, dx\right)^\frac 12<+\infty.
\]
This, together with \eqref{hl2}, implies that
\begin{equation}\label{culo}
\int_{B_1^c}\frac{h_{u_0}}{|x|^2}u_0^2\, dx
\le \left\|\frac{h_{u_0}}{|x|}u_0\right\|_{L^2(B_1^c)}
\left\|\frac{u_0}{|x|}\right\|_{L^2(B_1^c)}<+\infty.
\end{equation}
Now \eqref{leccaculo} is a direct consequence of \eqref{lecca} and \eqref{culo}.
	\end{proof}

		\begin{proposition}\label{pr:finallemma}
			For every $v\in L^2(\RD)$ we have
			\begin{enumerate}[label=(\roman{*}), ref=\roman{*}]
				\item \label{h2uv}
				$\dis \ird  \frac{h_{u_n}^2}{|x|^2}u_nv\,dx\to \ird  \frac{h_{u_0}^2}{|x|^2}u_0v\,dx,$
				\item \label{h(uv)}  $ \dis \frac{h_{u_0}}{|x|^2}u_0^2\left( \int_{B_{|x|}}u_0 v \, dy \right)\in L^1(\RD)$ and 
				$$ \ird \frac{h_{u_n}}{|x|^2}u_n^2\left( \int_{B_{|x|}}u_n v \, dy \right) dx\to \ird \frac{h_{u_0}}{|x|^2}u_0^2\left( \int_{B_{|x|}}u_0 v \, dy \right)dx,$$
				\item \label{(hu2)uv}  $\dis \left(\int_{|x|}^{+\infty}\frac {h_{u_0}(s)}{s}u_0^2(s)\,ds\right)u_0 \in L^2(\RD)$ and
				$$2\pi\ird \left(\int_{|x|}^{+\infty}\frac {h_{u_0}(s)}{s}u_0^2(s)\,ds\right)u_0v\, dx=\ird\frac{h_{u_0}}{|x|^2}u_0^2\left( \int_{B_{|x|}}u_0 v  \, dy\right) dx.$$
			\end{enumerate}
		\end{proposition}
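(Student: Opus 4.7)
I would address the three items in the order (\ref{h2uv}), (\ref{(hu2)uv}), (\ref{h(uv)}), since part (\ref{(hu2)uv}) supplies the $L^2$-bound on $V_{u_0}u_0$ needed for integrability in (\ref{h(uv)}), and the Fubini identity in (\ref{(hu2)uv}) reformulates the convergence in (\ref{h(uv)}) as one for $V_{u_n}u_n$. The common thread is a Tonelli swap in polar coordinates, combined with the uniform $L^\infty$-bound on $h_{u_n}/|x|$ and the strong $L^2$-convergence \eqref{u011} coming from Proposition \ref{pr:main}.

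For (\ref{h2uv}) I would use the algebraic splitting
\[
\frac{h_{u_n}^2 u_n - h_{u_0}^2 u_0}{|x|^2} = \frac{h_{u_n}}{|x|}\cdot\frac{h_{u_n}u_n - h_{u_0}u_0}{|x|} + \frac{h_{u_0}u_0}{|x|}\cdot\frac{h_{u_n} - h_{u_0}}{|x|}.
\]
After pairing with $v\in L^2$, the first summand vanishes in the limit by Cauchy--Schwarz using the $L^\infty$-bound on $h_{u_n}/|x|$ from Proposition \ref{pr:main}(\ref{hlinfinito}) and \eqref{u011}; the second vanishes by Lebesgue dominated convergence, with dominant $2\|h_{u_0}/|x|\|_\infty \cdot \frac{h_{u_0}|u_0|\,|v|}{|x|}$, which is in $L^1(\R^2)$ by Cauchy--Schwarz against $\frac{h_{u_0}u_0}{|x|}\in L^2$ and pointwise vanishing of $(h_{u_n}-h_{u_0})/|x|$.

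For (\ref{(hu2)uv}), Tonelli in polar coordinates together with the identity $\int_0^s r\,u_0^2(r)\,dr = 2h_{u_0}(s)$ gives
\[
\ird V_{u_0}u_0^2\,dx = 2\ird \frac{h_{u_0}^2 u_0^2}{|x|^2}\,dx,
\]
finite by Proposition \ref{pr:main}(\ref{hl2}). Combined with $V_{u_0}\le V_{u_0}(0)<\infty$ from Proposition \ref{pr:main}(\ref{(hu2)u}), this gives $\int V_{u_0}^2 u_0^2\,dx<\infty$, i.e.\ $V_{u_0}u_0\in L^2(\R^2)$. The pairing identity is the analogous Tonelli swap with the extra factor $u_0 v$, justified a posteriori by the absolute bound $\int|V_{u_0}u_0 v|\,dx\le\|V_{u_0}u_0\|_2\|v\|_2<\infty$.

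For (\ref{h(uv)}), the $L^1$-integrability follows by applying the identity in (\ref{(hu2)uv}) to $|v|$, giving the bound $2\pi\|V_{u_0}u_0\|_2\|v\|_2$. The same Tonelli swap applied to each $u_n\in H^1_r$ reduces the convergence assertion to $\int V_{u_n}u_n v\,dx\to\int V_{u_0}u_0 v\,dx$. The main obstacle is that $V_{u_n}$ is not uniformly bounded in $n$ (since $\|u_n\|_2$ may diverge as $\eps_n\downarrow 0$), preventing a direct domination. I would overcome this by proving strong $L^2$-convergence $V_{u_n}u_n\to V_{u_0}u_0$ via the Radon--Riesz scheme: first the pointwise a.e.\ convergence $V_{u_n}(x)\to V_{u_0}(x)$, obtained by dominated convergence in the defining integral using Proposition \ref{pr:main}(\ref{hlinfinito}) and \eqref{u011}; then the convergence of norms $\|V_{u_n}u_n\|_2\to\|V_{u_0}u_0\|_2$, obtained by expressing $\int V_{u}^2 u^2\,dx$ as a symmetric Tonelli double integral over $\R^2\times\R^2$ and passing to the limit through a truncation on $\{|y|,|z|\le R\}$ combined with the uniform tightness of $\{h_{u_n}u_n/|x|\}$ in $L^2(\R^2)$ inherited from \eqref{u011}.
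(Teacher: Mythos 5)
Your scheme is correct, but it follows a genuinely different route from the paper's, and the comparison is instructive. For \eqref{h2uv} the paper proves the full strong $L^2$-convergence \eqref{cicci} of $h_{u_n}^2u_n/|x|^2$ by redoing the local dominated-convergence argument together with concentration-tail estimates of the type \eqref{pippa}; your bilinear splitting tests directly against $v$ and lets \eqref{u011} carry all the tightness, which is lighter. For \eqref{(hu2)uv} you invert the paper's order: the paper first proves the $L^1$-bound of \eqref{h(uv)} via the two-regime pointwise estimates $h_{u_0}^{3/2}\le\|h_{u_0}/|\cdot|\|_\infty^{3/2}|x|^{3/2}$ near the origin and $h_{u_0}^{3/2}\le 1+h_{u_0}^2$ outside, and uses it only to legitimize Fubini--Tonelli, the $L^2$-membership then following implicitly by duality; your derivation from the Tonelli identity $\ird V_{u_0}u_0^2\,dx=2\ird h_{u_0}^2u_0^2/|x|^2\,dx$ together with the monotonicity $V_{u_0}\le V_{u_0}(0)$ (finite by \eqref{(hu2)u}) is shorter and more transparent. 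For the convergence in \eqref{h(uv)} the paper estimates the difference frontally in five pieces $B_n^1,\dots,B^5$ using local compactness and concentration; you transpose everything to the $V$-side and deduce strong $L^2$-convergence of $V_{u_n}u_n$ from a.e.\ convergence plus convergence of norms, which is conceptually cleaner but heavier, and two steps need more care than your sketch gives them: (a) there is no dominating function available in the defining integral of $V_{u_n}(x)$, so ``dominated convergence'' should be replaced by writing $2\pi V_{u_n}(x)=\int_{|y|>|x|}\frac{h_{u_n}u_n}{|y|}\,\frac{u_n}{|y|}\,dy$ and pairing the strongly convergent factor \eqref{u011} with $u_n/|\cdot|\to u_0/|\cdot|$ in $L^2(B_{|x|}^c)$, which follows from local compactness, the uniform $L^4$-bound, and $|y|^{-1}\in L^4(B_{|x|}^c)$; (b) in the symmetric representation $\ird V_u^2u^2\,dx=\frac1{2\pi^2}\iint h_u\bigl(\min(|y|,|z|)\bigr)\frac{h_uu^2}{|y|^2}\,\frac{h_uu^2}{|z|^2}\,dy\,dz$, after bounding $h_u(\min)\le\sqrt{h_u(|y|)h_u(|z|)}$ the uniform-in-$n$ bounds and tail smallness require exactly the paper's two-regime control of $\ird h_{u_n}^{3/2}u_n^2/|x|^2\,dx$ (the inequality $h^{3/2}\le1+h^2$ is not integrable against $u^2/|x|^2$ near the origin, where one must use $h_{u_n}(z)\le\|h_{u_n}/|\cdot|\|_\infty|z|$), with the far tail handled by the $L^2$-tightness inherited from \eqref{u011} and by $\|u_n\|_4^2\bigl(\int_{B_R^c}|x|^{-4}dx\bigr)^{1/2}$. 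With these repairs your argument closes; on balance it buys a stronger conclusion than the paper extracts (strong, rather than merely tested, $L^2$-convergence of $V_{u_n}u_n$) at the cost of the double-integral machinery.
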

		\begin{proof}
			 Let $v\in L^2(\RD)$.
\\
			By \eqref{h2l2} of Proposition \ref{pr:main} we deduce that $ \frac{h_{u_0}^2}{|x|^2}u_0v\in L^1(\RD)$. Moreover, 
we prove easily \eqref{h2uv} if we show that
\begin{equation}\label{cicci}
\frac{h_{u_n}^2}{|x|^2}u_n \to \frac{h_{u_0}^2}{|x|^2}u_0 \quad \hbox{ in }L^2(\RD).
\end{equation}
Indeed, let $B$ a bounded domain in $\RD$, then by \eqref{linf},  \eqref{pointwise} and the dominated convergence theorem, we get
		\begin{equation*}
			\int_B\left(\frac{h_{u_n}^2-h_{u_0}^2}{|x|^2}\right)^2u_0^2\,dx\to 0.
		\end{equation*}
Hence we deduce that
		\begin{align*}
			\int_B\left(\frac {h_{u_n}^2u_n-h_{u_0}^2u_0}{|x|^2}\right)^2\,dx&\le 2\left(	\int_B\frac{h^4_{u_n}}{|x|^4}(u_n-u_0)^2\,dx+\int_B\left(\frac{h_{u_n}^2-h_{u_0}^2}{|x|^2}\right)^2u_0^2\,dx\right)\\
			&\le \|h_{u_n}/|x|\|^4_{\infty}\|u_n-u_0\|_{L^2(B)}^2+o_n(1).
		\end{align*}
Moreover, by \eqref{linf}, \eqref{pippa} and \eqref{sega}, we have
that, for any $\d>0$ there exists $R_\d>0$ such that, uniformly for $n\ge 1$,
	\begin{equation*}
		\int_{B_{R_\d}^c}\frac{h_{u_n}^4u_n^2}{|x|^4}\,dx
+\int_{B_{R_\d}^c}\frac{h_{u_0}^4u_0^2}{|x|^4}\,dx
\le \d.
		\end{equation*}
Therefore
	\begin{align*}
\ird\left(\frac {h_{u_n}^2u_n-h_{u_0}^2u_0}{|x|^2}\right)^2\,dx
&\le\int_{B_{R_\d}}\left(\frac {h_{u_n}^2u_n-h_{u_0}^2u_0}{|x|^2}\right)^2\,dx\\
&\qquad\qquad+2\left[\int_{B_{R_\d}^c}\frac{h_{u_n}^4u_n^2}{|x|^4}\,dx+\int_{B_{R_\d}^c}\frac{h_{u_0}^4u_0^2}{|x|^4}\,dx\right]\\
&\le o_n(1) +\d
\end{align*}
and we conclude the proof of \eqref{cicci}.
			\\
			Now we prove \eqref{h(uv)}. Observe that
				\begin{equation*}
					\ird\left|\frac{h_{u_0}}{|x|^2}u_0^2\left( \int_{B_{|x|}}u_0 v  \, dy\right)\right|\,dx
					\le C 	\ird\frac{(h_{u_0})^{\frac 32}}{|x|^2}u_0^2\,dx\,\|v\|_2.
				\end{equation*}
For $R>0$, we have
				\begin{align*}
					\int_{B_R}\frac{(h_{u_0})^{\frac 32}}{|x|^2}u_0^2\,dx&\le C \left(\|h_{u_0}/|x|\|_\infty^{\frac32}\int_{B_R}\frac{u_0^2}{|x|^{\frac12}}\,dx\right)\\
					&\le C \|h_{u_0}/|x|\|_\infty^{\frac32}\|u_0\|_4^2\left(\int_{B_R}\frac{1}{|x|}\,dx\right)^{\frac12}<+\infty
				\end{align*}
			while, taking into account the inequality $a^{\frac32}\le 1 +a^2$ that holds true for any $a\ge 0$,
				\begin{align*}
					\int_{B_R^c}\frac{(h_{u_0})^{\frac 32}}{|x|^2}u_0^2\,dx&\le\int_{B_R^c}\frac{u_0^2}{|x|^2}\,dx+\int_{B_R^c}\frac{h_{u_0}^2}{|x|^2}u_0^2\,dx\\
					&\le \|u_0\|_4^2\left(\int_{B_R^c}\frac{1}{|x|^4}\,dx\right)^{\frac12}+\int_{B_R^c}\frac{h_{u_0}^2}{|x|^2}u_0^2\,dx<+\infty
				\end{align*}
			due to \eqref{hl2} of Proposition \ref{pr:main}. We deduce, therefore, that $\frac{h_{u_0}}{|x|^2}u_0^2\left( \int_{B_{|x|}}u_0 v  \, dy\right)\in L^1(\RD)$.\\
Moreover, observe that, for any $R>0$,
\begin{align*}
& \ird\left| \frac{h_{u_n}}{|x|^2}u_n^2\left( \int_{B_{|x|}}u_n v \, dy \right) dx- \frac{h_{u_0}}{|x|^2}u_0^2\left( \int_{B_{|x|}}u_0 v \, dy \right)\right|dx\\
&\qquad\le  \int_{B_{R}}  |u_n^2-u_0^2|\frac{h_{u_n}}{|x|^2}\left( \int_{B_{|x|}}u_n |v|  dy\right)\!dx
+ \int_{B_{R}}  u_0^2\left| \frac{h_{u_n}-h_{u_0}}{|x|^2} \right|\left(\int_{B_{|x|}}u_n |v|   dy\right)\!dx\\
&\qquad\qquad+ \int_{B_{R}}  u_0^2 \frac{h_{u_0}}{|x|^2}\left( \int_{B_{|x|}}|u_n -u_0||v|  dy \right)\!dx
\\
&\qquad\qquad+\int_{B_{R}^c}  \frac{h_{u_n}}{|x|^2}u_n^2\left( \int_{B_{|x|}}u_n |v| dy \right)\!dx
+\int_{B_{R}^c}  \frac{h_{u_0}}{|x|^2}u_0^2\left( \int_{B_{|x|}}u_0 |v| dy\right) \!dx
\\
&\qquad=B_n^1+B_n^2+B_n^3+B_n^4+B^5.
\end{align*}
Now, $B_n^1\to 0$ by compact embedding in bounded domain and a proper application of H\"older inequality, whereas $B_n^2$ and $B_n^3$ go to zero by dominated convergence, again using properly the H\"older inequality (the scheme of the proof is similar to that used to obtain \eqref{u011}).\\
As to $B_n^4$, observe that by Proposition \ref{conc}, for $\d>0$ we can take $R>0$ such that 
\begin{equation}\label{eq:conc2}
\int_{B_R^c}\frac{h_{u_n}^2}{|x|^2}u_n^2\,d x< \d\quad\hbox{ and }\quad\sup_n\|u_n\|_4^4\int_{B_R^c}\frac{1}{|x|^4}\,dx<\d^2
\end{equation}
uniformly for $n\ge 1$.
Since for every $a\ge 0$ we know that $a^{\frac 32}\le  1 +a^2$, by Holder and \eqref{eq:conc2},
\begin{align*}
B_n^4&=\int_{B_{R}^c}  \frac{h_{u_n}}{|x|^2}u_n^2\left( \int_{B_{|x|}}u_n |v|  dy\right)\!dx\\
&\le  C\left[\int_{B_{R}^c}  \frac{(h_{u_n})^{\frac 32}}{|x|^2}u_n^2\,dx\right]\|v\|_2\\
&\le C\left[ \|u_n\|_4^2\left(\int_{B_R^c}\frac{1}{|x|^4}\,dx\right)^{\frac12}+\int_{B_R^c}\frac{h_{u_n}^2}{|x|^2}u_n^2\,dx\right]\|v\|_2<2\d \|v\|_2.
\end{align*}
Finally we prove that, for $R$ large enough, $B^5$ is less then $\d$ arguing as for $B_n^4$ and taking into account that  $\frac{h_{u_0}}{|x|^2}u_0^2\left( \int_{B_{|x|}}u_0 |v|dy\right)\in L^1(\RD)$.
\\
			As to \eqref{(hu2)uv}, observe that 
			we only have to prove that we can apply Fubini-Tonelli Theorem to the function $f:\RD\times \RD \to \R$, where for almost every $(x,y)\in \RD \times\RD,$ 
				\begin{equation*}
					f(x,y):=\frac{1}{|x|^2}\chi_{|y|<|x|}h_{u_0}(x)u_0^2(x)u_0(y) v(y).
				\end{equation*}
			It is easy to see that $f$ is measurable in $\R^4$ endowed with the product measure of $\RD$-Lebesgue measures.\\ 
			Moreover, denoted by $g(x):=\ird f(x,y)\,dy$ and by $\tilde g(x):=\ird |f(x,y)|\,dy$ we have
				\begin{equation*}
						\ird \tilde g(x)\,dx=\ird\left(\frac{h_{u_0}(x)}{|x|^2}u_0^2(x)\int_{B_{|x|}}u_0(y) |v(y)|\, dy  \right)dx<+\infty
				\end{equation*}
			by \eqref{h(uv)}. Then, by Fubini-Tonelli Theorem, for almost every $y\in\RD$ there exists $k(y):=\ird f(x,y)\, dx$. Moreover $k(y)\in L^1(\RD)$  and 
			$$\ird k(y)\,dy=\ird g(x)\,dx.$$
			It is easy to check that this corresponds exactly to what we claimed in \eqref{(hu2)uv}.
		\end{proof}

Now we can prove Theorem \ref{th:main1}, except the positivity of the energy of the solution, which will be a direct consequence of Proposition \ref{pr:posen}.  

			\begin{proof}[Proof of Theorem \ref{th:main1}]
			By Proposition \ref{pipi}, for any $n\in \N$, there exists  $u_n \in \Hr$ such that $u_n>0$ and $I_{\e_n}'(u_n)=0$ in $H^{-1}$. Hence, for every $v\in\H$, we have that $I_{\e_n}'(u_n)[v]=0$, namely
					\begin{multline*}
						\ird \n u_n \cdot\n v\,dx+ \eps_n \int u_n v\,dx + \ird  \frac{h_{u_n}^2}{|x|^2}u_nv\,dx\\
						+ \frac 1{2\pi}\ird \frac{h_{u_n}}{|x|^2}u_n^2\left( \int_{B_{|x|}}u_n v  dy\right) dx=\ird u_n^p v\, dx. 
					\end{multline*}
				By Proposition \ref{pr:main} there exists $u_0\in \chr$ such that, up to a subsequence, $u_n\rightharpoonup u_0$ in $\ch$. \\
It is immediate that $u_0\ge 0$. Moreover $\ird \n u_n\cdot\n v\,dx\to\ird \n u_0\cdot\n v\,dx$ and, by boundedness of $\sqrt{\eps_n} u_n$ in $L^2(\RD)$, we also deduce that
					\begin{equation*}
						\eps_n \int u_n v\,dx\le \sqrt{\eps_n} \|\sqrt{\eps_n} u_n\|_2\|v\|_2\to 0.
					\end{equation*}
				By compact embedding of $\chr$ into $L^q(\RD)$ for $q>4$  (see Proposition \ref{prcomp}), we also have $u_n^p\to u_0^p$ in $L^{\frac{p+1}{p}}(\RD)$ and then
					\begin{equation*}
						\left| \ird u_n^p v\,dx-\ird u_0^p v\,dx\right|\le \|u_n^p-u_0^p\|_{\frac{p+1}{p}}\|v\|_{p+1}\to 0.
					\end{equation*}
By Proposition \ref{pr:finallemma}, we conclude that \eqref{weaksol} holds, namely
 $u_0$ is a weak solution of \eqref{eq}. By \eqref{hlinfinito} and \eqref{(hu2)u} of Proposition \ref{pr:main} and by \cite[Theorem 8.8]{GT} we infer that $u_0\in W^{2,2}_{\rm loc}(\RD)$ and so $u_0\in C(\RD)$. Observing that the conclusions of \cite[Proposition 2.1]{BHS} hold for $u_0$, by bootstraps arguments, following again \cite{GT}, we conclude that $u\in C^2(\RD)$ and $u>0$ by the maximum principle. 
\\
Keeping in mind that  $A_0\in L^\infty(\RD)$ by Proposition \ref{pr:main}, we can show that $A_i\in C^1(\RD)$, for $i=0,1,2$, arguing as in \cite[Proposition 2.1]{BHS}. Finally the potentials verify the weak formulation of the large-distance fall-off requirement by \eqref{hlinfinito} and \eqref{(hu2)u} in Proposition \ref{pr:main}.
			\end{proof}

We conclude this section showing that the definitions of weak solutions and classical solutions  coincide for finite energy functions. More precisely the following holds.
\begin{proposition}\label{pr:rem}
Let $u\in \calh$. Then $u$ is weak solution of \eqref{eq} if and only if $u$ is a classical solution of \eqref{eq}.
\end{proposition}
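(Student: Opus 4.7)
The plan is to prove both implications using the integrability properties of finite-energy functions already developed in Section \ref{se:ff} and in Proposition \ref{pr:main}. The two directions share the same underlying observation: most of the integrability lemmas used in the proof of Theorem \ref{th:main1} for the specific weak limit $u_0$ actually hold verbatim for every $u \in \calh$, since their proofs rely only on $u \in L^4(\RD)$ and on $h_u u/|x| \in L^2(\RD)$.

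For the implication \emph{classical implies weak}, I would take $\vfi \in C_0^\infty(\RD)$, multiply the pointwise equation \eqref{eq:static} by $\vfi$, integrate over $\RD$, and integrate by parts on the Laplacian term. To upgrade $\vfi \in C_0^\infty(\RD)$ to arbitrary $v \in \H$ by density, I would verify that every term in \eqref{weaksol} extends to a continuous linear functional on $\H$. The gradient term is controlled by $\|\n u\|_2\|\n v\|_2$ since $u \in \calh \subset \chr$. The nonlinearity is controlled via $u \in L^{p+1}(\RD)$ (Proposition \ref{pr:embedding}) and the embedding $\H \hookrightarrow L^{p+1}(\RD)$ valid for $p>3$. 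For the two nonlocal terms I would use that $u \in \calh$ gives both $h_u/|x| \in L^\infty(\RD)$ and $h_u u/|x| \in L^2(\RD)$, whence $\frac{h_u^2}{|x|^2}u$ and $V_u u$ both lie in $L^2(\RD)$; pairing with $v \in \H \subset L^2(\RD)$ yields $L^1$ integrands.

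For the implication \emph{weak implies classical}, I would first verify that $U_u$ and $V_u$ are well-defined and continuous on $\RD$. The arguments used in items \eqref{hlinfinito} and \eqref{(hu2)u} of Proposition \ref{pr:main} carry over, since they only require $u \in L^4(\RD)$ and $h_u u/|x| \in L^2(\RD)$, both guaranteed by $u \in \calh$. Rewriting the equation in distributional form as
\[
-\Delta u = |u|^{p-1}u - U_u(x)\, u - V_u(x)\, u,
\]
the right-hand side lies in $L^q_{\rm loc}(\RD)$ for every $q \ge 1$, because $U_u \in L^\infty(\RD)$, $V_u \in L^\infty_{\rm loc}(\RD)$ by continuity, and $u \in L^q_{\rm loc}(\RD)$ for every $q \ge 1$ by Remark \ref{re:loc}. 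Applying \cite[Theorem 8.8]{GT} as in the proof of Theorem \ref{th:main1} gives $u \in W^{2,2}_{\rm loc}(\RD) \hookrightarrow C^{0,\a}_{\rm loc}(\RD)$, a Schauder-type bootstrap promotes this to $u \in C^2(\RD)$, and the Strauss-type estimate of Proposition \ref{pr:radial} forces $u(x)\to 0$ as $|x|\to +\infty$.

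The main obstacle I anticipate is the continuity of $V_u$ at the origin, since $h_u(s)u^2(s)/s$ is not a priori integrable on $(0,1)$. This is precisely where the finite-energy hypothesis enters: one argues as in the chain \eqref{leccaculo}–\eqref{culo}, splitting the integral $\int_{\RD}\frac{h_u}{|x|^2}u^2\, dx$ over $B_1$ and $B_1^c$, and using respectively $u \in L^6_{\rm loc}(\RD)$ (together with $h_u/|x| \in L^\infty$) near the origin and $h_u u/|x| \in L^2$ at infinity. Once this continuity is established, no further new ideas beyond those appearing in Section \ref{se:3} are required, and both implications close.
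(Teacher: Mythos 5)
Your proposal is correct and takes essentially the same route as the paper, whose proof of Proposition \ref{pr:rem} consists precisely in observing that the integrability conditions of Propositions \ref{pr:main} and \ref{pr:finallemma} hold for every $u\in\calh$ (they use only $u\in L^4(\RD)$ and the finiteness of $\ird \frac{h_u^2u^2}{|x|^2}\,dx$) and then repeating the last part of the proof of Theorem \ref{th:main1}: regularity via \cite[Theorem 8.8]{GT}, bootstrap to $C^2(\RD)$, and decay from Proposition \ref{pr:radial}. Your write-up simply makes explicit what the paper leaves implicit, namely the density argument upgrading test functions from $C_0^{\infty}(\RD)$ to $\H$ in the direction from classical to weak, and the continuity of $V_u$ at the origin via the splitting over $B_1$ and $B_1^c$.
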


\begin{proof}
Observing that all the integrability conditions of Propositions \ref{pr:main} and \ref{pr:finallemma} hold for functions belonging to $\calh$, then, arguing as in the last part of proof of Theorem \ref{th:main1}, we conclude.
\end{proof}

\section{Energy of static solutions}\label{se:gs}

We now prove that any weak solution with finite energy in the sense of Definition \ref{def:ws} satisfies a Nehari type identity. We would like to remark that this fact cannot be deduced as a trivial consequence of \eqref{weaksol} since, in general, we do not know if a weak solution is in $\H$. Moreover, while, in general,  the Nehari  identity is given by $E'(u)[u]=0$, in  our case, not only the weak solution is not found as a critical point of the functional but also the functional could be not well defined on the weak solution.
\begin{proposition}\label{pr:nehari}
Let $u\in \calh$ be a weak solution of \eqref{eq}, then it satisfies the following Nehari type identity
\begin{equation}\label{nehari}
\|\n u\|_2^2
+3 \ird  \frac{h_u^2 u^2}{|x|^2}\ dx
=\|u\|_{p+1}^{p+1}.
\end{equation} 
\end{proposition}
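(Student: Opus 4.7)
Since $u\in\calh\subset\chr$ need not belong to $\H$, one cannot plug $v=u$ directly into \eqref{weaksol}. The plan is to test against the truncations $v_R:=u\chi_R$, where $\chi_R\in C^\infty_c(\RD,[0,1])$ is a radial cutoff with $\chi_R\equiv 1$ on $B_R$, $\chi_R\equiv 0$ outside $B_{2R}$, and $|\n\chi_R|\le C/R$. Since $v_R$ has compact support while $u\in L^4_{\rm loc}(\RD)$ and $\n v_R=\chi_R\n u+u\n\chi_R\in L^2(\RD)$, we have $v_R\in\H$, so $v_R$ is admissible.

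Substituting $v=v_R$ into \eqref{weaksol} yields five integrals. Four are ``diagonal'', in the sense that each is a fixed $L^1(\RD)$ density multiplied by $\chi_R$: the terms $|\n u|^2\chi_R$, $\frac{h_u^2u^2}{|x|^2}\chi_R$ and $|u|^{p+1}\chi_R$ are dominated by the corresponding integrable densities thanks to $u\in\calh$, while $V_u u^2\chi_R$ has the same property because $V_u u^2\in L^1(\RD)$ (proved below). Dominated convergence then identifies their limits as $R\to\infty$. The only off-diagonal piece is the cross term
\begin{equation*}
\ird(\n u\cdot\n\chi_R)\,u\,dx,
\end{equation*}
supported in the annulus $A_R=B_{2R}\setminus B_R$. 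By Cauchy--Schwarz and H\"older its absolute value is at most $\|\n u\|_{L^2(A_R)}\|u\,\n\chi_R\|_2$, with $\|u\,\n\chi_R\|_2^2\le (C/R^2)\int_{A_R}u^2\,dx\le (C/R^2)|A_R|^{1/2}\|u\|_4^2\le C\|u\|_4^2/R$, hence bounded; since $\n u\in L^2(\RD)$ forces $\|\n u\|_{L^2(A_R)}\to 0$, this cross term vanishes in the limit.

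Passing to the limit one obtains
\begin{equation*}
\|\n u\|_2^2+\ird\frac{h_u^2 u^2}{|x|^2}\,dx+\ird V_u u^2\,dx=\|u\|_{p+1}^{p+1}.
\end{equation*}
It remains to prove the key identity $\ird V_u u^2\,dx=2\ird\frac{h_u^2 u^2}{|x|^2}\,dx$, which simultaneously justifies the $L^1$-integrability claimed above and reduces the three terms on the left to a single one. Since the integrand is non-negative, Tonelli's theorem applies in radial coordinates:
\begin{equation*}
\ird V_u u^2\,dx=2\pi\int_0^{+\infty}\!r u^2(r)\int_r^{+\infty}\!\frac{h_u(s)}{s}u^2(s)\,ds\,dr=2\pi\int_0^{+\infty}\!\frac{h_u(s)}{s}u^2(s)\int_0^{s}\!r u^2(r)\,dr\,ds,
\end{equation*}
and invoking $\int_0^s r u^2(r)\,dr=2h_u(s)$ collapses the right-hand side to $4\pi\int_0^{+\infty}h_u^2(s)u^2(s)/s\,ds=2\ird h_u^2 u^2/|x|^2\,dx$, which is finite because $u\in\calh$. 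Combining yields \eqref{nehari}. The delicate part of the argument is the admissibility of $v_R$ in the weak formulation together with the vanishing of the cross gradient term in the absence of any $L^2$-bound on $u$; the rest is a routine dominated-convergence and Fubini bookkeeping, justified by Propositions \ref{pr:main} and \ref{pr:finallemma}.
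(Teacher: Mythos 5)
Your proposal is correct and follows essentially the same route as the paper's proof: the paper also tests \eqref{weaksol} with $\psi_n u$ for a (piecewise linear rather than smooth) radial cutoff $\psi_n$, kills the gradient cross term on the annulus by the same H\"older estimate, treats the local terms by dominated convergence using the finite energy of $u$, and reduces the nonlocal term via Fubini--Tonelli and the identity $\int_{B_r}u^2\,dy=4\pi h_u(r)$ to $2\ird \frac{h_u^2u^2}{|x|^2}\,dx$ exactly as you do. The only cosmetic difference is that you establish the full-space identity $\ird V_u u^2\,dx=2\ird \frac{h_u^2u^2}{|x|^2}\,dx$ first and then apply dominated convergence, while the paper keeps the cutoff inside the inner integral and bounds the tail by $2\int_{B_n^c}\frac{h_u^2u^2}{|x|^2}\,dx$.
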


\begin{proof}
For any  $n\in \N$, let $\psi_n :\RD \to \R$, where
\[
\psi_n(x):=
\begin{cases}
1 & \hbox{if }|x|\le n,
\\[3mm]
\dis \frac{2n -|x|}{n} & \hbox{if  }n\le |x|\le 2n,
\\[3mm]
0 & \hbox{if }|x|\ge 2n.
\end{cases}
\] 
Being $\psi_n u\in \H$, for any $n\in \N$, we have that
\begin{equation}\label{nehar-n}
\begin{split}
&\ird \n u \cdot\n (\psi_n u)\, dx 
+  \ird \psi_n \frac{h_u^2 u^2}{|x|^2} \,dx
+ \ird \left(\int_{|x|}^{+\infty}\frac {h_u(s)}{s}u^2(s)\,ds\right)\psi_n u^2\, dx 
\\
&\qquad\qquad\qquad=\ird \psi_n |u|^{p+1} \,dx.
\end{split}\end{equation}
Observe that, being $u\in \chr$,
\begin{equation*}
\begin{split}
&\left|\int_{\R^2}\nabla u \cdot \n (\psi_n u)\, dx-
\int_{\R^2}|\nabla u|^2\, dx\right|\\
&\qquad\le
\int_{\R^2}|\nabla u |^2 |\psi_n -1|\, dx
+\int_{\R^2}|\nabla u||u|  |\n \psi_n|\, dx \\
&\qquad\le
\int_{B_n^c}|\nabla u |^2 \, dx
+\Big(\int_{B_n^c}|\nabla u|^2\, dx\Big)^\frac 12
\Big(\int_{B_n^c}|u|^{4}\, dx\Big)^\frac 1{4}
\Big(\int_{A_n}|\nabla \psi_n|^4\, dx\Big)^\frac 14
\\
&\qquad=o_n(1),
\end{split}
\end{equation*}
where $A_n:= B_{2n}\setminus B_n$.
\\
Analogously, being $u$ with finite energy and $u\in L^{p+1}(\RD)$, we have easily that
\begin{align}
\left| \ird \psi_n \frac{h_u^2 u^2}{|x|^2}\,dx
-  \ird\frac{h_u^2 u^2}{|x|^2}\,dx\right|
&=o_n(1), \label{hpsi}
\\
\left|\ird \psi_n |u|^{p+1} \,dx
-\ird  |u|^{p+1} \,dx\right| 
&=o_n(1). \label{p+1psi}
\end{align}
Finally observe that, due to the fact that $u$ has finite energy, arguing as in Proposition \ref{pr:finallemma}, we have that 
\begin{equation*}
\ird \left(\int_{|x|}^{+\infty}\frac {h_{u}(s)}{s}u^2(s)\,ds\right)\psi_n u^2\, dx
=\frac1{2\pi}\ird\frac{h_{u}u^2}{|x|^2}\left( \int_{B_{|x|}}\psi_n u^2 dy \right) dx.
\end{equation*}
Therefore, using again the fact that $u$ has finite energy, we have
\begin{equation}\label{h2psi}
\begin{split}
&\left|
\ird \left(\int_{|x|}^{+\infty}\frac {h_{u}(s)}{s}u^2(s)\,ds\right)\psi_n u^2\, dx
-2 \ird\frac{h_u^2 u^2}{|x|^2}\,dx
\right|
\\
&\qquad=\left|
\frac1{2\pi}\ird\frac{h_{u}u^2}{|x|^2}\left( \int_{B_{|x|}}\psi_n u^2  dy\right) dx
-2 \ird\frac{h_u^2 u^2}{|x|^2}\,dx
\right|
\\
&\qquad=\left|
\frac1{2\pi}\ird\frac{h_{u}u^2}{|x|^2}\left( \int_{B_{|x|}}\psi_n u^2 dy \right) dx
-\frac1{2\pi}\ird\frac{h_{u}u^2}{|x|^2}\left( \int_{B_{|x|}}u^2 dy \right) dx
\right|
\\
&\qquad=
\frac1{2\pi}\ird\frac{h_{u}u^2}{|x|^2}\left( \int_{B_{|x|}}(1-\psi_n) u^2  dy\right) dx
\\
&\qquad=
\frac1{2\pi}\int_{B_n^c}\frac{h_{u}u^2}{|x|^2}\left( \int_{B_{|x|}}(1-\psi_n) u^2 dy \right) dx
\\
&\qquad\le 
2\int_{B_n^c}\frac{h_{u}^2u^2}{|x|^2} dx
=o_n(1).
\end{split}
\end{equation}
Now the conclusion follows by \eqref{nehar-n} together with \eqref{hpsi}, \eqref{p+1psi}, and \eqref{h2psi}.
\end{proof}

We now prove that each classical solution of \eqref{eq} with finite energy satisfies a Pohozaev type identity. We point out that even if a similar identity is present also in \cite{BHS}, we have to provide a different proof since their arguments need the essential information that the solution belongs to $L^2(\RD)$. Hence a new and different strategy is necessary.

	\begin{proposition}\label{pr:poho}
		Let $u\in \calh$ be a classical solution of \eqref{eq}, then $u$ satisfies the following Pohozaev type identity
			\begin{equation}\label{poho}
				 \ird  \frac{h_u^2 u^2}{|x|^2}\ dx
				=\frac 1 {p+1}\|u\|_{p+1}^{p+1}.
			\end{equation}
	\end{proposition}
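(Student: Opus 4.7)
The plan is to carry out the classical Pohozaev multiplier argument, adapted to our nonlocal setting: multiply the equation pointwise by $\psi_n(x)\,(x\cdot\nabla u)$, where $\psi_n$ is exactly the radial cutoff used in the proof of Proposition \ref{pr:nehari}, and then pass to the limit $n\to\infty$. Since $u\in C^2(\RD)$ is a classical solution and the test $\psi_n(x\cdot\nabla u)$ is compactly supported, every integral below is absolutely convergent and every integration by parts is legitimate. For the Laplacian term, the standard identity $\nabla u\cdot\nabla(x\cdot\nabla u)=|\nabla u|^2+\tfrac12 x\cdot\nabla(|\nabla u|^2)$ together with a second IBP produces the usual bulk contribution $\tfrac{N-2}{2}\|\nabla u\|_2^2$, which vanishes in dimension $N=2$; only the cutoff correctors
\[
\int_{\RD}(x\cdot\nabla u)(\nabla u\cdot\nabla\psi_n)\,dx\;-\;\tfrac12\int_{\RD}|\nabla u|^2\,(x\cdot\nabla\psi_n)\,dx
\]
survive, and both are controlled by $C\int_{\{n\le|x|\le 2n\}}|\nabla u|^2\,dx\to 0$, using $|x\cdot\nabla\psi_n|\le 2$ on the support of $\nabla\psi_n$ together with $\|\nabla u\|_2<+\infty$ (finite energy). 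The pure power term is handled as in the Nehari proof: $|u|^{p-1}u\,(x\cdot\nabla u)=\tfrac{1}{p+1}x\cdot\nabla(|u|^{p+1})$ followed by IBP yields the limit $-\tfrac{2}{p+1}\|u\|_{p+1}^{p+1}$.

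The crucial step is the nonlocal part, which I would treat by handling the two nonlocal terms \emph{together}. Setting
\[
\Phi_u(r):=\frac{h_u^2(r)}{r^2}+V_u(r),\qquad V_u(r):=\int_r^{+\infty}\frac{h_u(s)}{s}\,u^2(s)\,ds,
\]
the identity $u\,(x\cdot\nabla u)=\tfrac12\,x\cdot\nabla(u^2)$ together with one IBP rewrites the combined nonlocal contribution as $-\tfrac12\int_{\RD}u^2\,\operatorname{div}(\Phi_u\,\psi_n\,x)\,dx$. A direct radial calculation, using $h_u'(r)=\tfrac12 r\,u^2(r)$ and $V_u'(r)=-\tfrac{h_u(r)}{r}u^2(r)$, gives $r\,\Phi_u'(r)=-\tfrac{2 h_u^2}{r^2}$, which cancels exactly the $\tfrac{2 h_u^2}{r^2}$ piece of $2\Phi_u$ in the two-dimensional radial divergence formula, leaving the clean expression
\[
\operatorname{div}\bigl(\Phi_u(|x|)\,x\bigr)=2\,V_u(|x|).
\]
Consequently the bulk limit is $-\int_{\RD}V_u u^2\,dx$, which is finite by Proposition \ref{pr:finallemma} (iii), while the error produced by $\nabla\psi_n$ is bounded by $C\int_{\{n\le|x|\le 2n\}}\bigl(\tfrac{h_u^2 u^2}{|x|^2}+V_u u^2\bigr)\,dx\to 0$. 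Summing the three limits produces
\[
-\int_{\RD}V_u u^2\,dx=-\frac{2}{p+1}\|u\|_{p+1}^{p+1};
\]
invoking Proposition \ref{pr:finallemma} (iii) with $v=u$, together with the elementary identity $\int_{B_{|x|}}u^2\,dy=4\pi\,h_u(|x|)$, gives $\int_{\RD}V_u u^2\,dx=2\int_{\RD}\tfrac{h_u^2 u^2}{|x|^2}\,dx$, which yields \eqref{poho}.

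The main obstacle is precisely the cancellation exploited above. If the two nonlocal terms were integrated separately, each would contribute a multiple of $\int h_u u^4\,dx$ (with opposite signs): these pieces cancel in the sum, but their \emph{individual} finiteness is not at all guaranteed under the sole hypothesis $u\in\calh$. Indeed, the Strauss-type decay of Proposition \ref{pr:radial} provides only $|u(x)|\lesssim|x|^{-\tau}$ with $\tau<\tfrac14$, which is far from enough to make $\int h_u u^4\,dx<+\infty$. Performing the divergence of the \emph{combined} radial vector field $\Phi_u\,x$ in a single step — whose two contributions miraculously telescope down to $2V_u$ — is what makes the identity available for every classical finite-energy solution, without demanding any $L^2$ information on $u$.
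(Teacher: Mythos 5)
Your argument is correct, and although its heart is the same as the paper's --- the Pohozaev multiplier $x\cdot\nabla u$ together with the crucial cancellation between the two nonlocal terms, carried out without any $L^2$ information on $u$ --- the implementation is genuinely different. The paper multiplies by $\nabla u\cdot x$ and integrates over balls $B_R$, performing the radial integration by parts separately on each nonlocal term (see \eqref{poho4}): there the cancellation appears as two opposite terms $\mp\pi\int_0^R h_u u^4\, r\,dr$, neither of which is known to be finite on all of $\RD$ under the sole hypothesis $u\in\calh$ (which is exactly why they must be kept on a ball until they telescope), and the price is a pair of nonnegative boundary terms $\pi h_u^2(R)u^2(R)$ and $\pi V_u(R)u^2(R)R^2$, disposed of via the Berestycki--Lions device $\liminf_{R\to+\infty}R\int_{\partial B_R}(\cdot)=0$ applied to the $L^1$ densities $\frac{h_u^2u^2}{|x|^2}$ and $V_u u^2$, along a suitably chosen diverging sequence of radii. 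Your cutoff version encodes the same cancellation infinitesimally, as $r\Phi_u'(r)=-2h_u^2(r)/r^2$ inside $\operatorname{div}(\Phi_u(|x|)\,x)=2V_u(|x|)$, and the Lipschitz cutoff $\psi_n$ replaces boundary terms by annular errors controlled by the tails of the same two $L^1$ densities. This buys a cleaner passage to the limit (no subsequence of radii is needed, since the tail of an $L^1$ function vanishes outright, whereas the paper must select $R_n$ so that both nonnegative boundary terms vanish simultaneously), at the modest cost of justifying the divergence computation for $\Phi_u$ --- unproblematic here, since for a classical solution $U_u$ and $V_u$ are continuous, $\Phi_u$ is $C^1$ away from the origin with $h_u'(r)=\frac12 r u^2(r)$ and $V_u'(r)=-\frac{h_u(r)}{r}u^2(r)$, and $\Phi_u$ stays bounded near $0$, so one may integrate by parts on $\{\delta<|x|<2n\}$ and let $\delta\to 0$.

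One small repair is needed in your last step: you cannot invoke Proposition \ref{pr:finallemma}~(\ref{(hu2)uv}) with $v=u$, since that statement is proved for $v\in L^2(\RD)$ and $u$ carries no $L^2$ information --- avoiding precisely this is the point of the whole section. But the identity you need, $\ird V_u u^2\,dx=2\ird \frac{h_u^2u^2}{|x|^2}\,dx$, follows directly from Tonelli's theorem applied to the nonnegative integrand $\frac{h_u(s)}{s}u^2(s)\,u^2(r)\,r\,\chi_{\{r<s\}}$, exactly as the paper does inside its own proofs of Propositions \ref{pr:nehari} and \ref{pr:poho}; this simultaneously yields $V_u u^2\in L^1(\RD)$, which your annular error estimate uses, so the argument closes with no further hypotheses.
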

	\begin{proof}
Let $u\in \calh$ be a classical solution of \eqref{eq} and fix $R>0$.  Multiplying by $\n u \cdot x$ and integrating by parts on $B_R$ we have
\begin{multline}
-\inb \Delta u (\n u \cdot x)\, dx
+\inb\frac{h_u^2}{|x|^2}u (\n  u\cdot x)\,dx
+ \inb \left(\int_{|x|}^{+\infty}\frac {h_u}{s}u^2(s)\,ds\right)u (\n  u\cdot x)\, dx \label{poho1}
\\
=\inb |u|^{p-1}u (\n u \cdot x)\, dx.
\end{multline}
Arguing as in \cite{BHS}, we infer that
\begin{align}
\inb \Delta u (\n u \cdot x)\, dx&=o_R(1), \label{poho2}
\\
\inb |u|^{p-1}u (\n u \cdot x)\, dx
&=-\frac{2}{p+1} \|u\|^{p+1}_{p+1}+o_R(1), \label{poho3}
\end{align}
where $o_R(1)$ denotes a vanishing function as $R\to+\infty.$
\\
Observe that we cannot repeat the arguments of \cite{BHS} to study also the remaining terms, because in their arguments it is essential the fact that $u$ belongs to $L^2(\RD)$. Therefore, we use another approach which seems, actually, less involved than that of \cite{BHS}. Integrating by parts, we have
\begin{equation} \label{poho4}
\begin{split}
&\inb\frac{h_u^2}{|x|^2}u (\n  u\cdot x)\,dx
+ \inb \left(\int_{|x|}^{+\infty}\frac {h_u(s)}{s}u^2(s)\,ds\right)u (\n  u\cdot x)\, dx \\
& \qquad = 2\pi\int_0^R h^2_uuu' \, dr + 2\pi\int_0^R \left(\int_r^{+\infty}\frac {h_u(s)}{s}u^2(s)\,ds\right)u  u' r^2\, dr\\
&\qquad= \pi h^2_u(R) u^2(R) - \pi \int_0^R h_u u^4 r\,dr\\ &\qquad\qquad+ \pi \left(\int_R^{+\infty}\frac {h_u(s)}{s}u^2(s)\,ds\right) u^2(R)R^2 + \pi \int_0^R h_uu^4 r\,dr\\
&\qquad\qquad\qquad- 2\pi\int_0^R \left(\int_r^{+\infty}\frac {h_u(s)}{s}u^2(s)\,ds\right)u^2 r\, dr.
\end{split}
\end{equation}
Being $u$ with finite energy, as observed in \cite{BL}, we have
\[
\liminf_{R\to +\infty}R\int_{\de B_R} \frac{h_u^2(|x|)}{|x|^2}u^2\, dx=0,
\]
and so, by radial symmetry, 
\[
\liminf_{R\to +\infty} h^2_u(R) u^2(R)=0.
\]
Using again the fact that $u$ has finite energy, by Fubini-Tonelli Theorem we deduce that $\left(\int_{|x|}^{+\infty}\frac {h_u(s)}{s}u^2(s)\,ds\right)u^2$ is in $L^1(\RD)$, since 
\[
\ird \left(\int_{|x|}^{+\infty}\frac {h_u(s)}{s}u^2(s)\,ds\right)u^2\, dx=2\ird  \frac{h_u^2(|x|)}{|x|^2}u^2\, dx.
\]
Hence, arguing as before, we have 
\[
\liminf_{R\to+\infty}\left(\int_R^{+\infty}\frac {h_u(s)}{s}u^2(s)\,ds\right) u^2(R)R^2 =0.
\]
Finally, another immediate consequence of the fact that $\left(\int_{|x|}^{+\infty}\frac {h_u(s)}{s}u^2(s)\,ds\right)u^2$ is in $L^1(\RD)$, we have that 
\begin{equation*}
\begin{split}
2\pi\int_0^R \left(\int_r^{+\infty}\frac {h_u(s)}{s}u^2(s)\,ds\right)u^2 r\, dr
&=\inb \left(\int_{|x|}^{+\infty}\frac {h_u(s)}{s}u^2(s)\,ds\right)u^2 \, dx
\\
&=\ird \left(\int_{|x|}^{+\infty}\frac {h_u(s)}{s}u^2(s)\,ds\right)u^2\, dx
+o_R(1)
\\
&=2\ird  \frac{h_u^2(|x|)}{|x|^2}u^2\, dx
+o_R(1).
\end{split}
\end{equation*}
By this, considering a suitable diverging sequence $\{R_n\}_n$, we conclude taking into account \eqref{poho1}, \eqref{poho2}, \eqref{poho3}, and \eqref{poho4}.
\end{proof}

Recalling the definition of $\S$ given in \eqref{esse}, observe that, by \eqref{nehari} and \eqref{poho}, any $u\in \S$ satisfies 
\begin{equation}\label{neharipoho}
\a \|\n u\|_2^2
+(3\a -2) \ird  \frac{h^2_u u^2}{|x|^2} dx
-\frac{(p+1)\a -2}{p+1} \|u\|_{p+1}^{p+1}=0,
\end{equation}
where we have fixed $\a >1$ and such that $\frac{2}{p-1}<\a<\frac{2}{5-p}$, for $p\in (3,5)$ and $\a >1$ arbitrary, for $p\ge 5$. Moreover we have that the functional $E$ is well defined in $\S$.

 \begin{proposition}\label{pr:posen}
Every static finite energy solution of the form \eqref{ansalz} generated by $u\in\S$ has positive energy. Moreover
 we have that $\inf_{u\in \S}E(u)>0$.
 \end{proposition}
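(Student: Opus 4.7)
The plan is to combine the Nehari identity \eqref{nehari} and the Pohozaev identity \eqref{poho} to rewrite $E(u)$ for $u\in\S$ as a positive multiple of $\|u\|_{p+1}^{p+1}$, and then to show that this $L^{p+1}$-norm is bounded away from zero on $\S$.

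First I would substitute. From \eqref{poho} one has $\ird \frac{h_u^2 u^2}{|x|^2}\,dx = \frac{1}{p+1}\|u\|_{p+1}^{p+1}$, which combined with \eqref{nehari} yields
\[
\|\n u\|_2^2 = \|u\|_{p+1}^{p+1} - 3\ird \frac{h_u^2 u^2}{|x|^2}\,dx = \frac{p-2}{p+1}\|u\|_{p+1}^{p+1}.
\]
Plugging both formulas into the expression \eqref{energy} for the energy yields
\[
E(u) = \frac{1}{2}\cdot\frac{p-2}{p+1}\|u\|_{p+1}^{p+1} + \frac{1}{2}\cdot\frac{1}{p+1}\|u\|_{p+1}^{p+1} - \frac{1}{p+1}\|u\|_{p+1}^{p+1} = \frac{p-3}{2(p+1)}\|u\|_{p+1}^{p+1},
\]
which is strictly positive since $p>3$ and $u\not\equiv 0$. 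This gives the first assertion.

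For the second assertion it suffices to prove that there exists $c>0$ such that $\|u\|_{p+1}^{p+1}\ge c$ for every $u\in \S$. Set $A(u):=\|\n u\|_2^2+\ird\frac{h_u^2 u^2}{|x|^2}\,dx$. By \eqref{nehari} we have $A(u)\le \|u\|_{p+1}^{p+1}$. On the other hand, by the embedding in Proposition \ref{pr:embedding}, $\|u\|_{p+1}^{p+1}\le C\|u\|_{2,4}^{p+1}$, and from inequality \eqref{eq:5.1} together with the elementary bound $2ab\le a^2+b^2$,
\[
\|u\|_4^4\le 4\|\n u\|_2\Bigl(\ird\frac{h_u^2u^2}{|x|^2}dx\Bigr)^{1/2}\le 2A(u),
\]
so $\|u\|_{2,4}=\|\n u\|_2+\|u\|_4\le A(u)^{1/2}+2^{1/4}A(u)^{1/4}$. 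Combining these estimates we obtain
\[
A(u)\le C\bigl(A(u)^{1/2}+2^{1/4}A(u)^{1/4}\bigr)^{p+1}.
\]

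The last step is to deduce a positive lower bound for $A(u)$ from this inequality. If $A(u)\le 1$, the $A(u)^{1/4}$ term dominates and the inequality reduces to $A(u)\le C' A(u)^{(p+1)/4}$, which since $p>3$ forces $A(u)\ge (1/C')^{4/(p-3)}=:c_0>0$; if instead $A(u)>1$, we already have $A(u)>1$. In either case $A(u)\ge \min(c_0,1)>0$, hence $\|u\|_{p+1}^{p+1}\ge A(u)\ge \min(c_0,1)$. Invoking the identity $E(u)=\frac{p-3}{2(p+1)}\|u\|_{p+1}^{p+1}$ derived above completes the proof that $\inf_{u\in\S}E(u)>0$. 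No real obstacle is expected; the only delicate point is ensuring the chain of inequalities coming from Proposition \ref{prop:dis} and the embedding is arranged so that the exponents work out with $p>3$.
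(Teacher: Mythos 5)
Your proof is correct, and it is assembled from exactly the same ingredients as the paper's argument --- the Nehari identity \eqref{nehari}, the Pohozaev identity \eqref{poho}, the inequality \eqref{eq:5.1} of Proposition \ref{prop:dis}, and the embedding $\ch\hookrightarrow L^{p+1}(\RD)$ of Proposition \ref{pr:embedding} --- but it organizes them differently in two respects. First, where the paper works with the $\a$-weighted combination \eqref{neharipoho} and only records that $E$ restricted to $\S$ is a positive linear combination of $\|\n u\|_2^2$ and the nonlocal term (formula \eqref{eq:I}), you eliminate both terms in favour of $\|u\|_{p+1}^{p+1}$ and obtain the exact proportionality $E(u)=\frac{p-3}{2(p+1)}\|u\|_{p+1}^{p+1}$ on $\S$, which is sharper than what the paper states and makes the role of the hypothesis $p>3$ completely transparent. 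Second, the paper proves $\inf_{\S}E>0$ by contradiction, taking $u_n\in\S$ with $E(u_n)\to 0$, deducing $u_n\to 0$ in $\ch$ via \eqref{eq:5.1}, and then contradicting the chain $\|u_n\|_{2,4}^4\le C\|u_n\|_{p+1}^{p+1}\le C\|u_n\|_{2,4}^{p+1}$; your argument runs essentially the same chain of estimates directly on a single $u\in\S$, through the quantity $A(u)=\|\n u\|_2^2+\ird\frac{h_u^2u^2}{|x|^2}\,dx$, and extracts an explicit uniform constant $\min(c_0,1)$ with $c_0=(1/C')^{4/(p-3)}$. What your version buys is an effective, quantitative lower bound and a cleaner energy formula; what the paper's sequential format buys is uniformity of method, since the same contradiction scheme is reused almost verbatim in Proposition \ref{me<C} and in the compactness arguments behind Theorem \ref{th:gs}. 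One point you should make explicit in a final write-up: the substitutions in your first step implicitly use that each of the three terms of $E(u)$ is individually finite --- this holds because $u\in\calh\subset\chr$ gives $\|\n u\|_2<\infty$ and, via Proposition \ref{pr:embedding}, $\|u\|_{p+1}<\infty$, so finiteness of $E(u)$ forces finiteness of the nonlocal term --- and that $u\not\equiv 0$ for $u\in\S$ guarantees $\|u\|_{p+1}>0$, so the identity indeed yields strict positivity.
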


\begin{proof}
By Theorem \ref{th:main1}, we know that $\S$ is not empty.
\\
Now, if we compute $E$ on $\S$, we have 
	\begin{equation}\label{eq:I}
		E(u)=\left(\frac 12 -\frac \a{(p+1)\a -2}\right)\|\n u\|_2^2+\left(\frac 1 2 -\frac{3\a -2} {(p+1)\a -2}\right)\ird  \frac{h_u^2u^2}{|x|^2} dx
	\end{equation}
and then, by the choice of $\a $, for any  $p>3$, we have that  $\inf_{u\in\S}E(u)\ge 0$.\\
Assume by contradiction that, for a suitable sequence $\{u_n\}_n$ in $\S$, we have $E(u_n)\to 0$, then, by \eqref{eq:5.1}, we deduce also that $u_n\to 0$ in $\ch$.
\\ 
Using again \eqref{eq:5.1}, we have, moreover, that
\begin{equation*}
\|u_n\|^4_4
\le   C\left(\| \nabla u_n\|_2^2 
+ \ird \frac{h_{u_n}^2u_n^2}{|x|^2} dx\right)
\end{equation*}
and then, since $u_n$ satisfies \eqref{neharipoho}, we have
\begin{align*}
\|\n u_n\|_2^2
+\| u_n\|_4^4
\le C \|u_n\|_{p+1}^{p+1}.
\end{align*}
Therefore, taking into account that $\|u_n\|_{2,4}\to 0$ and by the continuous embedding $\ch\hookrightarrow L^{p+1}(\RD)$, we have that, for any $n\in\N$ large enough,
\[
\|u_n\|_{2,4}^4
\le C( \|\n u_n\|_2^2
+\| u_n\|_4^4)
\le C\|u_n\|_{p+1}^{p+1}\le C \|u_n\|_{2,4}^{p+1},
\]
which contradicts the fact that $u_n\to 0$ in $\ch$.
\end{proof}

As by-product of our results, we now prove the existence of positive energy non-static solution of \eqref{eq:e0} satisfying the ansatz \eqref{ansalz} with sufficiently small frequency.

\begin{corollary}\label{chepalle}
There exists $\o_0>0$ such that, for all $\o\in (0,\o_0)$, there exists  $(\phi,A_0,A_1,A_2)$, a positive energy non-static solution of \eqref{eq:e0} satisfying the ansatz \eqref{ansalz}.
\end{corollary}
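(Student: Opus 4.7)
The plan is to use the family of perturbed solutions $\{u_\omega\}_{\omega>0}$ furnished by Proposition~\ref{pipi} (reading the parameter $\eps$ there as the frequency $\omega$) and produce, via the ansatz \eqref{ansalz}, a family of non-static standing waves of \eqref{eq:e0}. Each $u_\omega\in H^1_r(\RD)$ lies in $L^2(\RD)$, so $\phi(t,x)=u_\omega(|x|)e^{i\omega t}$ has finite total charge, the large-distance fall-off \eqref{eq:a012} is verified, and the gauge fix $\xi=0$ is consistent. Since, as discussed in the introduction, $\omega+\xi$ is the gauge invariant of the family of gauge-equivalent standing waves, after imposing $\xi=0$ the factor $e^{i\omega t}$ cannot be removed by any admissible gauge transformation when $\omega>0$; the solutions are therefore genuinely non-static.

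The substantive task is to show $E(u_\omega)>0$. Because $u_\omega\in H^1_r(\RD)$ is a true critical point of the classical Sobolev functional $I_\omega$, I would apply the multipliers $t\mapsto I_\omega(tu_\omega)$ and $\lambda\mapsto I_\omega(u_\omega(\cdot/\lambda))$ directly, bypassing the delicate truncation arguments required in the zero-mass case (cf.\ Propositions~\ref{pr:nehari}--\ref{pr:poho}); this yields the Nehari and Pohozaev identities
\begin{align*}
\|\n u_\omega\|_2^2 + \omega\|u_\omega\|_2^2 + 3\ird\frac{h_{u_\omega}^2 u_\omega^2}{|x|^2}\,dx &= \|u_\omega\|_{p+1}^{p+1},\\
\omega\|u_\omega\|_2^2 + 2\ird\frac{h_{u_\omega}^2 u_\omega^2}{|x|^2}\,dx &= \tfrac{2}{p+1}\|u_\omega\|_{p+1}^{p+1}.
\end{align*}
Using these two equations to eliminate $\|\n u_\omega\|_2^2$ and $\ird h_{u_\omega}^2 u_\omega^2/|x|^2\,dx$ in the definition of $E$, a short algebraic computation (in which the $\omega\|u_\omega\|_2^2$ contributions cancel) produces the striking, $\omega$-independent identity
\[
E(u_\omega)\;=\;\frac{p-3}{2(p+1)}\|u_\omega\|_{p+1}^{p+1}.
\]
Since $u_\omega>0$ and $p>3$, the right-hand side is strictly positive. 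The argument in fact works for every $\omega>0$, so the threshold $\omega_0$ appearing in the statement plays no essential role and may be taken arbitrarily large.

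There is no serious obstacle here: once one has solutions of $(\mathcal P_\omega)$ in $H^1_r(\RD)$, both identities are classical and the remainder is elementary. The only point requiring a small amount of care is to confirm that the minimizer on $\M_\omega$ produced via \cite{BHS} is a genuine critical point of $I_\omega$ (and not merely of the constrained functional), so that the Pohozaev identity really applies; this is handled by the usual Lagrange-multiplier argument in \cite{BHS}. As a consistency check, the same identity gives $E=0$ at $p=3$ (matching the zero-energy static solutions of \cite{BHS,jackiw0}) and $E<0$ for $p<3$, in agreement with the negative-energy branch found in \cite{AD}.
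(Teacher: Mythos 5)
Your proof is correct, but it takes a genuinely different route from the paper, and in fact proves more. The paper argues by contradiction: assuming a sequence $\o_n\searrow 0$ with $E(u_{\o_n})\le 0$, it invokes Proposition \ref{pr:bdd}, the concentration--compactness analysis of Section \ref{se:3} (in particular Proposition \ref{pr:main} and \eqref{u011}), weak lower semicontinuity and the compact embedding of $\chr$ into $L^{p+1}(\RD)$ to pass to a static limit $u_0$ solving \eqref{eq}, and then contradicts $0<E(u_0)\le\liminf_n E(u_n)\le 0$ via Proposition \ref{pr:posen}. This only yields positivity of the energy for $\o$ below some unspecified threshold $\o_0$, which is why $\o_0$ appears in the statement. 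Your argument instead works directly at fixed $\o>0$: since $u_\o\in H^1_r(\RD)$ is a genuine solution of the massive problem, both the Nehari and Pohozaev identities are available from \cite{BHS} (the paper itself confirms this in the remark before Proposition \ref{pr:poho}, where it notes the arguments of \cite{BHS} work precisely because $u\in L^2(\RD)$; note also that the constraint $J_\e$ is exactly the combination $\a\cdot$Nehari $-$ Pohozaev, so the paper implicitly relies on these identities anyway). Eliminating $\|\n u_\o\|_2^2$ and the nonlocal term as you do, the $\o\|u_\o\|_2^2$ contributions indeed cancel and one gets
\begin{equation*}
E(u_\o)=\frac{p-3}{2(p+1)}\,\|u_\o\|_{p+1}^{p+1}>0 \quad\text{for every }\o>0,
\end{equation*}
which I have verified and which is consistent with \eqref{nehari}--\eqref{poho} at $\o=0$ and with the sign discussion for $p\le 3$ in \cite{AD}. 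So your proof strengthens the corollary: the threshold $\o_0$ is superfluous, and you get an explicit, $\o$-independent energy formula, at the price of invoking the Pohozaev identity for the perturbed problem rather than only the machinery already set up in Section \ref{se:3}. Two small points of care: the heuristic ``differentiate $I_\o$ along $t\,u$ and along dilations'' is not literally legitimate at a mere critical point (dilation paths need not be differentiable in $H^1$); the rigorous route is the multiplier $x\cdot\n u$ together with the $C^2$ regularity and decay of $u_\o$, exactly as in \cite{BHS} --- which you correctly defer to. And the non-staticity claim needs the gauge-invariance of $\o+\xi$ after fixing $\xi=0$, which you also address; the paper leaves this implicit.
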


\begin{proof}
Suppose by contradiction that that there exists a decreasing sequence $\{\o_n\}_n$ which tends to zero as $n \to +\infty$ and, for any $n\ge 1$, we define $u_n:=u_{\o_n}$, where $u_{\o_n}$ is as in Proposition \ref{pipi} and with $E(u_n)\le 0$. By Proposition \ref{pr:bdd} we infer that $\{u_n\}_n$ is bounded in $\ch$ and there exists $u_0\in \calh$ the weak limit of $\{u_n\}_n$ in $\ch$. Arguing as in the previous section we deduce that $u_0$ is a solution of \eqref{eq} which has positive energy by Proposition \ref{pr:posen} and such that the conclusions of  Proposition \ref{pr:main} hold. Then, by the weak lower semicontinuity of the norm, by the compact embedding of $\chr$ into $L^{p+1}(\RD)$ and by \eqref{u011}, we have
\[
0<E(u_0)\le \liminf_n E(u_n)\le 0,
\] 
reaching a contradiction.
\end{proof}

Now we have all the tools to conclude the prove Theorem \ref{th:gs}.

\begin{proof}[Proof of Theorem \ref{th:gs}]
	Consider $\{u_n\}_n$ a sequence in $\S$ such that $E(u_n)\to \inf_{u\in \S}E(u)$. By \eqref{eq:I}, the sequence is bounded in $\chr$ and then there exists $\bar u\in \ch$ such that, up to a subsequence, $u_n\rightharpoonup \bar u$ in $\ch$ and 
		\begin{align}
			u_n &\to \bar u\quad\hbox{ in }L^{p+1}(\RD),\label{eq:convstr}\\
			u_n &\to \bar u\quad\hbox{ in }L^{q}(B),\hbox{ for all } B\subset\RD \hbox { bounded and } q\ge 1,\label{eq:convboun}\\
			u_n &\to \bar u \quad\hbox{ a.e. in }\RD.\label{eq:qo}
		\end{align} 
	Of course $\bar u\in \chr$.\\
Arguing as in Section \ref{se:3}, we can see that also the minimizing sequence $\{u_n\}_n$ concentrates in the sense of \cite{L1,L2} and, arguing as in Propositions \ref{pr:main} and \ref{pr:finallemma}, this implies that $\bar u$ is a classical solution of \eqref{eq} with finite energy and so it satisfies \eqref{neharipoho}.
\\
By \eqref{neharipoho} and \eqref{eq:convstr}, therefore, we have that
\begin{equation}\label{eq:str}
\begin{split}
&\lim_n \left(\a \|\n u_n\|_2^2
+(3\a -2)\ird  \frac{h_{u_n}^2u_n^2}{|x|^2}dx\right)
=\frac{(p+1)\a -2}{p+1}\lim_n  \|u_n\|_{p+1}^{p+1}
\\
&\qquad\qquad\qquad\qquad=\frac{(p+1)\a -2}{p+1}\|\bar u\|_{p+1}^{p+1}=
\a\|\n \bar u\|_2^2
+(3\a -2)\ird  \frac{h_{\bar u}^2\bar u^2}{|x|^2} dx.
\end{split}
\end{equation}
Since  $\ird  \frac{h_{u_n}^2u_n^2}{|x|^2} dx$ is bounded, we can assume that, up to a subsequence, it is convergent.\\
We prove that 
	\begin{equation}\label{eq:liminf1}
		\ird  \frac{h_{\bar u}^2\bar u^2}{|x|^2} dx\le\lim_n\ird  \frac{h_{u_n}^2u_n^2}{|x|^2} dx.
	\end{equation}
By \eqref{eq:convboun} we have that $u_n \to \bar u$ in $L^2(B_{|x|})$, for all $x\in\RD$. This implies that  
	\begin{equation}\label{eq:convpoint}
		h_{u_n}(x)\to h_{\bar u}(x),\quad\hbox{ for all } x\in\RD.
	\end{equation}
By \eqref{eq:qo}, \eqref{eq:convpoint} and Fatou Lemma, we prove our claim \eqref{eq:liminf1}. 
\\
Using the weak lower semicontinuity property of the norms, inequality \eqref{eq:liminf1}, and formula \eqref{eq:str}, we obtain
	\begin{align*}
		&(3\a -2)\left(\lim_n \ird  \frac{h_{u_n}^2 u_n^2}{|x|^2} dx
-\ird  \frac{h_{\bar u}^2\bar u^2}{|x|^2} dx\right)
\\
		&\qquad \qquad\le \a \left(\liminf_n \|\n u_n\|_2^2 
- \|\n \bar u\|_2^2\right)
+(3\a -2)\left(\lim_n \ird  \frac{h_{u_n}^2 u_n^2}{|x|^2} dx
-\ird  \frac{h_{\bar u}^2\bar u^2}{|x|^2} dx\right)
\\
		&\qquad\qquad\le 			\lim_n \left(\a \|\n u_n\|_2^2
		+(3\a -2)\ird  \frac{h_{u_n}^2 u_n^2}{|x|^2} dx\right)
- \a \|\n \bar u\|_2^2-(3\a -2)\ird  \frac{h_{\bar u}^2\bar u^2}{|x|^2} dx=0.
	\end{align*}
By \eqref{eq:liminf1} we deduce that
\begin{equation*}
\lim_n\ird  \frac{h_{u_n}^2\bar u_n^2}{|x|^2} dx
=\ird  \frac{h_{\bar u}^2\bar u^2}{|x|^2} dx
\end{equation*}	
and, again by \eqref{eq:str}, $\lim_n\|\n u_n\|_2=\|\n\bar u\|_2$. Taking into account also \eqref{eq:convstr}, $E(u_n)\to E(\bar u)$ and we conclude.
\end{proof}

\section{Static solutions with finite charge}\label{se:carica}

In all this section we assume that $p>9$ and we prove that, in this case, the solution found in Theorem \ref{th:main1} belongs to $L^2(\RD)$. 

We fix a decreasing sequence $\{\e_n\}_n$ which tends to zero as $n \to +\infty$ and, for any $n\ge 1$, we define $u_n:=u_{\e_n}$, where $u_{\e_n}$ is as in Proposition \ref{pipi}. By Proposition \ref{pr:bdd} we know that $\{u_n\}_n$ is bounded and, up to a subsequence, weakly convergent in $\ch$. Finally let $u_0\in \calh$ be the solution found in Theorem \ref{th:main1}  as the weak limit of $\{u_n\}_n$ in $\ch$. 

	\begin{proof}[Proof of Theorem \ref{th:ul2}]
We need only to prove that $u_0\in L^2(\RD)$: this and the Strauss radial Lemma \cite{strauss} imply that $(\phi,A_0,A_1,A_2)$ is a positive energy static solution of \eqref{eq:e0} satisfying \eqref{eq:a012}.
\\
			By contradiction, assume that $u_0\notin L^2(\RD)$. Then there exists $R_\sigma>0$ such that $\|u_0\|^4_{L^2(B_{R_\sigma})}= \s> 16\pi^2$. \\
			Fix $\s'\in (16\pi^2,\s)$. Since $u_n\to u_0$ in $L^2_{\rm loc}(\RD)$ up to a subsequence, we can assume that there exists $n_0\in\N$ such that
				\begin{equation}\label{eq:norm}
					\s'\le\|u_n\|^4_{L^2(B_{R_\sigma})}\le \s+1, \quad\hbox{ for all }n\ge n_0.
				\end{equation} 
			By Proposition \ref{pr:radial}, there exists $\tau\in \left(\frac 2{p-1},\frac 14\right)$, $C_\tau>0$ and $R_\tau>0$ such that 
			\begin{equation*}
			(u_n(r))^{p-1}\le \frac{ C_\tau} {r^{\tau(p-1)}},
			\end{equation*}
			for $r\ge R_\tau$ and any $n\ge 1$. 
			In particular, since $\tau(p-1)>2$, taken $\delta>0$ such that $\g:=\frac {\s'}{16\pi^2}-\d>1$, there exists $R_\tau'$ such that
				\begin{equation}\label{eq:dec}
					(u_n(r))^{p-1}\le \frac \d {r^2},
				\end{equation}
			for $r\ge R'_h$ and any $n\ge 1$.
			Up to replace $R_\sigma$ with $R'_\tau$ and $\s$ with a larger number, we can assume $R_\sigma=R'_\tau$.\\
			Observe that, by \eqref{eq:norm} and \eqref{eq:dec}, we have that
				\begin{equation}\label{eq:mag}
					\frac{h_{u_n}^{2}(|x|)}{|x|^2} 
					- u_n^{p-1} - \frac {\g}{|x|^2}\ge 0,
				\end{equation}
			for $|x|\ge R_\sigma$.
		Now consider the problem
\begin{equation*}
\begin{cases}
\dis -\Delta w 
+ \frac{\g}{|x|^2} 
w =
0 &\hbox{ if } |x|>R_\sigma, 
\\[3mm]
w=u_n&\hbox{ if } |x|=R_\sigma,\\[3mm]
w\to 0 &\hbox{ as }|x| \to +\infty,
\end{cases}
\end{equation*}
which is solved by $w_n(x)=u_n(R_\sigma)R_\sigma^{\sqrt \g}|x|^{-\sqrt \g}$. Observe that 
\begin{multline}\label{eq:com}
-\Delta (u_n-w_n)+\frac \g{|x|^2}(u_n-w_n)\\= \left(-\frac{h_{u_n}^{2}(|x|)}{|x|^2} - \int_{|x|}^{+\infty}\frac {h_{u_n}(s)}{s}u_n^2(s)\,ds
+ u_n^{p-1} + \frac \g{|x|^2} -\eps_n \right)u_n 
\end{multline}
in $ H^{-1}(\RD\setminus \overline {B_{R_\sigma}})$ and, since $u_n-w_n=0$ in $\partial B_{R_\sigma}$ and $u_n - w_n\to 0$ as $|x| \to +\infty$,
we have that $(u_n-w_n)^+\in H^{1}_0(\RD\setminus \overline {B_{R_\sigma}})$.\\
So, multiplying in \eqref{eq:com} by $(u_n-w_n)^+$ and integrating, by \eqref{eq:mag} and the fact that $u_n>0$ we have
		\begin{multline*}
			\int_{|x|\ge R_\sigma}|\n (u_n - w_n)^+|^2\,dx +\int_{|x|\ge R_\sigma}\frac \g{|x|^2}( (u_n - w_n)^+)^2\,dx\\
			= \int_{|x|\ge R_\sigma}\left(-\frac{h_{u_n}^{2}(|x|)}{|x|^2} - \int_{|x|}^{+\infty}\frac {h_{u_n}(s)}{s}u_n^2(s)\,ds
			+ u_n^{p-1} + \frac \g{|x|^2}-\eps_n \right)u_n (u_n-w_n)^+\,dx\le 0
		\end{multline*}
and  then, for $|x|\ge R_\sigma$ and any $n\ge n_0$, $0\le u_n\le w_n$.
\\
In conclusion, by Proposition \ref{pr:radial},
	\begin{align*}
		\|u_n\|^2_{L^2(\RD\setminus \overline {B_{R_\sigma}})}&\le 2\pi u_n^2(R_\sigma)R_\sigma^{2\sqrt \g}\int_{R_\sigma}^{+\infty}r^{1-2\sqrt \g}\,dr\\
		&\le \frac{2\pi}{2\sqrt \g - 2}u_n^2(R_\sigma)R_\sigma^{2}\\
		&\le \frac{2\pi}{2\sqrt \g - 2}\frac{ C_\tau} {R_\sigma^{2\tau}}R_\sigma^{2}
	\end{align*}
By this and \eqref{eq:norm} we deduce that $\{u_n\}_n$ is (up to a subsequence) bounded in $L^2(\RD)$ and so also in $\H$. 
Then, there exists $u\in H^1(\RD)$ and a subsequence of $\{u_n\}_n$ such that 
$u_n\rightharpoonup u$ in $H^1(\RD)$.
Since we can assume that the same subsequence is such that $u_n\to u_0$ a.e., we have $u_0=u\in L^2(\RD)$, and we obtain the contradiction.

	\end{proof}

\begin{remark}
Using similar arguments as before and taking into account the Strauss Lemma \cite{strauss}, we have that for any $\tau\in(0,a)$ there exists $C_\tau>0$ and $R_\tau>0$ such that
	\begin{equation*}
	|u_0(r)|\le \frac{C_\tau}{r^{\max (1/2,\sqrt \tau)}},\,\quad\hbox{ uniformly for } r\ge R_\tau,
	\end{equation*}  
	where $a=\lim_{r\to +\infty}h_{u_0}^2(r)$.
\end{remark}

\begin{remark}
Arguing as in the proof of Theorem \ref{th:ul2}, if $\|u_0\|_2>16 \pi^2$, then $\{u_n\}_n$ is bounded in $L^2(\RD)$.
\end{remark}

\appendix

\section{}\label{appendix}

\
\\
By H\"older inequality it is easy to see that if $u\in L^4_{\rm loc}(\RD)$ radially symmetric, 
then the function 
\[
U_u(x):=
\begin{cases}
\frac{h_{u}^{2}(|x|)}{|x|^2} & \hbox{if }x\neq 0,
\\
0& \hbox{if }x= 0,
\end{cases}
\]
is well defined in $\RD$.

In the following for a measurable function $u:\RD \to \R$,  we want to understand under which assumptions on $u$ we have that
\[
V_u(x):=\int_{|x|}^{+\infty}\frac {h_u(s)}{s}u^2(s)\,ds
\]
 is well defined.

\begin{lemma}\label{app-a1}
If $u\in L^q(\RD)$ and is radially symmetric with $q\in (2,4)$, then $V_u$ is well defined in $\RD\setminus\{0\}$.
\end{lemma}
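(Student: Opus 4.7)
The plan is a two-step application of Hölder's inequality to show that the integral defining $V_u(x)$ converges for every $|x|>0$. The hypothesis $q\in(2,4)$ enters twice: $q>2$ makes the first Hölder step legal, and $q<4$ is what makes the tail integral in the second step converge.

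First, I would estimate $h_u(s)$ pointwise for $s>0$. Writing
\[
h_u(s)=\frac{1}{4\pi}\int_{B_s}u^2\,dy,
\]
Hölder's inequality on $B_s$ with exponents $q/2$ and $q/(q-2)$ gives $h_u(s)\le C_q\,\|u\|_q^{2}\, s^{2(q-2)/q}$. Dividing by $s$, the integrand in $V_u$ satisfies the pointwise bound
\[
\frac{h_u(s)}{s}\,u^2(s)\le C_q\,\|u\|_q^{2}\, s^{(q-4)/q}\, u^2(s).
\]

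Second, I would apply Hölder again to
\[
\int_r^{+\infty} s^{(q-4)/q}\,u^2(s)\,ds=\int_r^{+\infty}\!\big[s^{(q-4)/q-1}\big]\cdot u^2(s)\cdot s\,ds,
\]
using the natural 2D radial measure $s\,ds$ with exponents $q/2$ and $q/(q-2)$. The $L^{q/2}(s\,ds)$ factor of $u^2$ contributes $(2\pi)^{-2/q}\|u\|_q^{2}$, which is finite by assumption. The $L^{q/(q-2)}(s\,ds)$ factor of $s^{(q-4)/q-1}$ reduces to a power integral of the form $\int_r^{+\infty} s^{(q-6)/(q-2)}\,ds$, and the elementary inequality $(q-6)/(q-2)<-1\Longleftrightarrow q<4$ shows that this integral converges for every $r>0$ exactly under the standing hypothesis on $q$.

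Combining the two estimates yields a finite upper bound for $V_u(x)$ whenever $|x|>0$, which is what is claimed; in fact one sees that $V_u(x)\le C(q,\|u\|_q)\,|x|^{2(q-4)/q}$, consistent with the tail decay expected in the zero-mass regime. The main subtlety is purely bookkeeping on the exponents: one must choose the Hölder pair in the second step so that the radial weight $s$ from the 2D measure combines correctly with the factor $s^{(q-4)/q}$ produced by the first step, and the critical threshold $q=4$ emerges precisely from this matching. No new ideas beyond Hölder are required, and no appeal to the Strauss-type radial decay (Proposition~\ref{pr:radial}) is needed since the bound is pointwise in $s$ and uses only the global $L^q$ integrability of $u$.
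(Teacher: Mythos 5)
Your proof is correct and essentially coincides with the paper's own argument: the paper obtains the same bound $h_u(s)\le c\,s^{2(q-2)/q}$ by Hölder on $B_s$, and then applies Hölder with the same exponent pair $q/(q-2)$, $q/2$ to the tail, merely written as the two-dimensional integral $\int_{B_{|x|}^c}u^2(y)\,|y|^{-4/q}\,dy$ instead of your one-dimensional form with measure $s\,ds$, with convergence of the power-weight integral occurring exactly when $q<4$. Your explicit by-product $V_u(x)\le C(q,\|u\|_q)\,|x|^{2(q-4)/q}$ is a correct refinement that the paper does not state but which follows from the identical computation.
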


\begin{proof}
Fix $x\neq 0$.
Observe that for any $s>|x|$, by H\"older inequality we have
\begin{align*}
h_u(s)=c\int_{B_s}u^2\, dy
\le c\left(\int_{B_s}1\, dy\right)^\frac{q-2}q \left(\int_{B_s}|u|^q\, dy\right)^\frac 2q
\le c s^\frac{2q-4}q.
\end{align*}
Therefore, being $q<4$, we have
\begin{align*}
V_u(x)\le c\int_{B_{|x|}^c}\frac {u^2(y)}{|y|^\frac 4q}dy
\le c\left(\int_{B_{|x|}^c}\frac 1{|y|^\frac 4{q-2}}\, dy\right)^\frac{q-2}q \left(\int_{B_{|x|}^c}|u|^q\, dy\right)^\frac 2q<+\infty.
\end{align*}
\end{proof}

\begin{lemma}\label{app-a2}
If $u\in L^q(\RD)$ and is radially symmetric with $q\in (2,4)$ and $u\in L^\tau_{\rm loc}(\RD)$ with $\tau\in(4,+\infty)$, then $V_u$ is well defined in $\RD$.
\end{lemma}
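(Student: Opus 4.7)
By Lemma \ref{app-a1} we already know that $V_u(x)$ is finite for every $x\neq 0$, so the only thing to check is that $V_u(0)=\int_0^{+\infty}\frac{h_u(s)}{s}u^2(s)\,ds$ is finite. The plan is to fix some $R>0$ and split
\[
V_u(0)=\int_0^R\frac{h_u(s)}{s}u^2(s)\,ds+\int_R^{+\infty}\frac{h_u(s)}{s}u^2(s)\,ds.
\]
The tail $\int_R^{+\infty}$ is the quantity $V_u(x)$ at any $x$ with $|x|=R$, hence it is finite by Lemma \ref{app-a1} (using the assumption $u\in L^q(\RD)$, $q\in(2,4)$). The only real work is the portion near $0$, where the local $L^\tau$ assumption must enter.

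For the near-origin piece, I would first estimate $h_u(s)$ using the local $L^\tau$ integrability. Since $h_u(s)=\frac{1}{4\pi}\int_{B_s}u^2\,dy$, H\"older's inequality with exponents $\tau/2$ and $\tau/(\tau-2)$ gives
\[
h_u(s)\le C\,|B_s|^{(\tau-2)/\tau}\,\|u\|_{L^\tau(B_s)}^{2}\le C\, s^{2(\tau-2)/\tau}\,\|u\|_{L^\tau(B_R)}^{2}\qquad \text{for }0<s\le R,
\]
so that
\[
\frac{h_u(s)}{s}\le C\, s^{(\tau-4)/\tau}\qquad \text{for }0<s\le R,
\]
which is actually bounded near $0$ since $\tau>4$.

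It remains to control $\int_0^R s^{(\tau-4)/\tau}u^2(s)\,ds$. Here I would write $s^{(\tau-4)/\tau}u^2(s)=s^{(\tau-6)/\tau}\cdot(s^{2/\tau}u^2(s))$ and apply H\"older once more with exponents $\tau/(\tau-2)$ and $\tau/2$:
\[
\int_0^R s^{(\tau-4)/\tau}u^2(s)\,ds\le\left(\int_0^R s^{(\tau-6)/(\tau-2)}\,ds\right)^{(\tau-2)/\tau}\left(\int_0^R s\,u^\tau(s)\,ds\right)^{2/\tau}.
\]
The second factor is finite by the radial form of $u\in L^\tau_{\rm loc}(\RD)$, and the first factor is finite precisely when $(\tau-6)/(\tau-2)>-1$, i.e.\ when $\tau>4$, which is our hypothesis. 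This gives $\int_0^R\frac{h_u(s)}{s}u^2(s)\,ds<+\infty$ and hence $V_u(0)<+\infty$.

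The only delicate point is the correct choice of H\"older exponents in the last step, chosen so that the $L^\tau$ integrability of $u$ on $B_R$ (which in radial variables is the integrability of $s\,u^\tau(s)$) can be matched against a power of $s$ that remains integrable near $0$; this is exactly where the threshold $\tau>4$ is used and where the estimate would fail for $\tau=4$.
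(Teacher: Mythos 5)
Your proposal is correct and follows essentially the same route as the paper's own proof: split $V_u(0)$ at a finite radius, handle the tail by Lemma \ref{app-a1}, bound $h_u(s)\le C s^{(2\tau-4)/\tau}$ via H\"older with the local $L^\tau$ norm, and close with a second H\"older application whose singular-weight factor is integrable precisely when $\tau>4$. Your one-dimensional estimate $\int_0^R s^{(\tau-6)/(\tau-2)}\,ds$ is exactly the radial form of the paper's two-dimensional integral $\int_{B_1}|y|^{-4/(\tau-2)}\,dy$, so the two arguments coincide up to the choice $R=1$ and the change of variables.
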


\begin{proof}
By Lemma \ref{app-a1}, we have to prove only that $V_u(0)<+\infty$.
\\
Observe that
\begin{align*}
V_u(0)&=\int_{0}^{+\infty}\frac {h_u(s)}{s}u^2(s)\,ds
=\int_{0}^{1}\frac {h_u(s)}{s}u^2(s)\,ds
+
\int_{1}^{+\infty}\frac {h_u(s)}{s}u^2(s)\,ds
\\
&=\int_{0}^{1}\frac {h_u(s)}{s}u^2(s)\,ds
+V_u(1)=A_u+V_u(1).
\end{align*}
By Lemma \ref{app-a1}, we need to estimate only $A_u$. Since 
\begin{align*}
h_u(s)=c\int_{B_s}u^2\, dy
\le c\left(\int_{B_s}1\, dy\right)^\frac{\tau-2}\tau \left(\int_{B_s}|u|^\tau\, dy\right)^\frac 2\tau
\le c s^\frac{2\tau-4}\tau,
\end{align*}
being $\tau>4$, we have
\begin{align*}
A_u&\le c\int_{B_{1}}\frac {u^2(y)}{|y|^\frac 4\tau}dy
\le c\left(\int_{B_{1}}\frac 1{|y|^\frac 4{\tau-2}}\, dy\right)^\frac{\tau-2}\tau \left(\int_{B_{1}}|u|^\tau dy\right)^\frac 2\tau<+\infty.
\end{align*}
\end{proof}

\begin{remark}
By \cite{BHS}, we already know that, if $u\in L^2(\RD)\cap  L^\infty_
{\rm loc}(\RD)$, then $V_u\in L^\infty(\RD)$.
\end{remark}

\end{document}